\newtheorem{proposition}{Proposition}
\newtheorem{theorem}[proposition]{Theorem}
\newtheorem{lemma}[proposition]{Lemma}
\theoremstyle{remark}
\newtheorem{remark}[proposition]{Remark}
\theoremstyle{definition}
\numberwithin{equation}{section}
\numberwithin{proposition}{section}
\numberwithin{figure}{section}
\newcommand{\Z}{\mathbb{Z}}
\newcommand{\N}{\mathbb{N}}
\newcommand{\R}{\mathbb{R}}
\renewcommand{\le}{\leqslant}
\renewcommand{\ge}{\geqslant}
\newcommand{\E}{\mathbb{E}}
\renewcommand{\P}{\mathbb{P}}
\newcommand{\F}{\mathcal{F}}
\newcommand{\Zd}{\mathbb{Z}^d}
\newcommand{\Rd}{{\mathbb{R}^d}}
\newcommand{\ep}{\varepsilon}
\newcommand{\eps}{\varepsilon}
\renewcommand{\subset}{\subseteq}
\renewcommand{\fint}{\strokedint}
\newcommand{\Ll}{\left}
\newcommand{\Rr}{\right}
\newcommand{\cu}{{\scaleobj{1.2}{\square}}}
\renewcommand{\hat}{\widehat}
\newcommand{\td}{\widetilde}
\newcommand{\un}{\underline}
\newcommand{\mcl}{\mathcal}
\newcommand{\msf}{\mathsf}
\newcommand{\mbf}{\mathbf}
\newcommand{\al}{\alpha}
\newcommand{\de}{\delta}
\newcommand{\si}{\sigma}
\newcommand{\1}{\mathbf{1}}
\newcommand{\dr}{\partial}
\renewcommand{\d}{\mathrm{d}}
\renewcommand{\a}{\mathbf{a}}
\renewcommand{\b}{\mathbf{b}}
\newcommand{\ahom}{{\overbracket[1pt][-1pt]{\a}}}
\renewcommand{\O}{\mathcal{O}}
\renewcommand{\L}{\mathcal{L}}
\newcommand{\Lpot}{{\mathcal{L}^2_{\mathrm{pot}}}}
\renewcommand{\H}{\mathcal{H}}
\newcommand{\n}{\mathbf{n}}
\newcommand{\e}{\mathbf{e}}
\newcommand{\T}{{ \empty{} [ 0,1 ] ^d  }}  
\begin{document}

\title[Computing homogenized coefficients]{Computing homogenized coefficients via multiscale representation and hierarchical hybrid grids}

\begin{abstract}
We present an efficient method for the computation of homogenized coefficients of divergence-form operators with random coefficients. The approach is based on a multiscale representation of the homogenized coefficients. We then implement the method numerically using a finite-element method with hierarchical hybrid grids, which is a semi-implicit method allowing for significant gains in memory usage and execution time. Finally, we demonstrate the efficiency of our approach on two- and three-dimensional examples, for piecewise-constant coefficients with corner discontinuities. For moderate ellipticity contrast and for a precision of a few percentage points, our method allows to compute the homogenized coefficients on a laptop computer in a few seconds, in two dimensions, or in a few minutes, in three dimensions.
\end{abstract}

\author[A. Hannukainen]{A. Hannukainen}
\address[A. Hannukainen]{Department of Mathematics and Systems Analysis, Aalto University, Finland}
 \email{antti.hannukainen@aalto.fi}

\author[J.-C. Mourrat]{J.-C. Mourrat}
\address[J.-C. Mourrat]{DMA, Ecole normale sup\'erieure,
CNRS, PSL University, Paris, France}
\email{mourrat@dma.ens.fr}

\author[H. Stoppels]{H. Stoppels}
\address[H. Stoppels]{Department of Mathematics and Systems Analysis, Aalto University, Finland}
 \email{}

\keywords{homogenization, multiscale method, hierarchical hybrid grids}
\subjclass[2010]{65N55, 35B27}
\date{\today}

\maketitle

%
%
%
%
%
%

\section{Introduction}

\subsection{Statement of the main results} The goal of this paper is to define, study, and implement an efficient approach to the calculation of homogenized coefficients for divergence-form operators with random coefficients. That is, we consider operators of the form $\nabla \cdot \a \nabla$, where $\a= (\a(x))_{x \in \Rd}$ is a random coefficient field on $\Rd$ taking values in the set of symmetric positive definite matrices. We assume that this random coefficient field is uniformly elliptic, $\Z^d$-stationary, and of unit range of dependence; see Subsection~\ref{ss.assump} for precise statements. 
%
%
Under these assumptions, there  exists a \emph{homogenized matrix} $\ahom$ such that the large-scale properties of the heterogeneous operator $\nabla \cdot \a(x) \nabla$ resemble those of the homogeneous operator $\nabla \cdot \ahom \nabla$. We define a multiscale method allowing to compute the homogenized matrix efficiently, and identify rigorously its rate of convergence. We then explain how to implement the algorithm in practice, using the notion of hierarchical hybrid grids, and demonstrate its performance on examples.

\smallskip 

For these numerical examples, we consider coefficient fields that are piecewise constant on a square tiling, in two dimensions, or on a cubic tiling, in three dimensions. This class of examples is particularly challenging from a computational perspective. Indeed, solutions develop singularities at the corners of the tiling which are essentially the worst possible in the class of (isotropic) coefficient fields with fixed ellipticity contrast (see Subsection~\ref{ss.rough}). Despite this, for moderate ellipticity contrast and for a precision of a few percentage points, our algorithm runs on a laptop computer and outputs a satisfactory approximation of the homogenized matrix within a few seconds in two dimensions, and within a few minutes in three dimensions. Our code is written in the Julia language and is freely available online, see the link in~\eqref{e.github}.

\smallskip

The method we explore here was introduced in \cite{efficient} in the context of discrete finite-difference equations. The main idea is to decompose the homogenized matrix into a series of terms, each of which accounting for a different length scale. The terms associated with short length scales naturally enjoy very small boundary layers and low computational effort. Those associated with larger length scales are a priori more demanding, but only appear as small correction terms in the decomposition, and can thus be computed on much smaller sample domains. Overall, this second effect more than compensates for the increase in computational effort, so that the majority of the computational time and memory is spent on the shortest length scales. A prominent feature of the method is that minimal effort is spent on the calculation of boundary layers. An additional benefit is that the method can be refined on the fly: if some calculations have already been performed and one realizes that more precision is necessary, then one does not need to throw these past calculations away and restart from scratch. 



\smallskip



We now describe this method more precisely. We fix $\xi \in \Rd$ of unit norm, and introduce the quantities that will allow us to compute $\xi \cdot \ahom \xi$. We let $v_{-1} \in H^{-1}_\mathrm{loc}(\Rd)$ be 
\begin{equation}  
\label{e.def.v-1}
v_{-1}(x) := \nabla \cdot (\a(x) \xi),
\end{equation}
and for each $k \in \N$, we define inductively $v_k \in H^1_\mathrm{loc}(\Rd)$ to be the unique stationary solution to
\begin{equation}  
\label{e.def.vk}
\Ll( 2^{-k} - \nabla \cdot \a \nabla \Rr) v_k = 2^{-k} \, v_{k-1} \qquad \text{in } \Rd.
\end{equation}
We also give ourselves a bump function $\chi \in C^\infty_c(\Rd)$ with compact support in the unit ball $B(0,1)$ and such that
\begin{equation}
\label{e.chi.int}
\int_\Rd \chi = 1.
\end{equation}
In \eqref{e.chi.int} and throughout the paper, we use the shorthand notation
\begin{equation*}  
\int_\Rd \chi = \int_\Rd \chi(x) \, \d x.
\end{equation*}
For every $r \ge 1$, we set
\begin{equation}  
\label{e.def.chir}
\chi_r(x) := r^{-d} \chi \Ll( \tfrac{x}{r} \Rr) .
\end{equation}
The following theorem is our main theoretical result. 
\begin{theorem}[Efficient approximation of $\ahom$]
For every $\ep \in (0,\frac{d-1}{2d})$, there exists a constant~$c(\ep, \|\chi\|_{H^{\Ll\lceil \frac d 2 + \frac 1 4 \Rr\rceil}(\Rd)},\Lambda,d) > 0$ such that the following holds. Let $n \in \N$, and denote
\label{t.main}
\begin{equation}
\label{e.def.rk}
r_k := 2^{n - \Ll( \frac 1 2 - \ep \Rr) k} \qquad (k \in \{0,\ldots, n\}),
\end{equation}
\begin{equation}
\label{e.def.hatsigma}
\hat \sigma_n^2 := \int_{\Rd} \Ll( -\a\xi \cdot \nabla v_0 + v_0^2 \Rr)\chi_{r_0}   + \sum_{k = 1}^n 2^k \int_{\Rd}  \Ll( v_{k-1} v_k + v_k^2 \Rr)\chi_{r_k} .
\end{equation}
For every $t \ge 0$, we have
\begin{equation}  
\label{e.main}
\P \Ll[ \Ll| \xi \cdot \ahom \xi  + \hat \sigma_n^2 - \int_\Rd (\xi \cdot \a \xi) \chi_{r_0} \Rr| \ge t  2^{-\frac{nd}{2}}  \Rr] \le 2 \exp \Ll( -c t \Rr) .
\end{equation}
\end{theorem}

Recall that we assume the law of the coefficient field $(\a(x))_{x \in \Rd}$ to be invariant under translations by vectors of $\Zd$. If we make the stronger hypothesis that the law is invariant under translations by any vector of $\Rd$, then we can replace each average against a smooth mask~$\chi_r$ in \eqref{e.def.hatsigma} by an average over the cube $(-r,r)^d$. However, under our current more restrictive assumption of invariance under translations by vectors of $\Zd$, this replacement will only work if we make sure that the side length of the box is an integer. In other words, we would need to know the identity of the underlying lattice of periods (which without loss of generality was fixed here to be~$\Z^d$) and to make sure that the domain over which we take the average contains an integer number of fundamental cells. In contrast, the formulation in Theorem~\ref{t.main} does not require that we identify the lattice of periods.

\smallskip

A result comparable to Theorem~\ref{t.main} was proved in \cite{efficient} in the context of discrete finite-difference equations. Besides the adaptation to the continuous setting, there are two main differences between the present result and the one obtained in \cite{efficient}. The first one is that the quantities on the right side of \eqref{e.def.hatsigma} are averages against a smooth mask, while only box averages could be handled in \cite{efficient}. The second and most important difference is that Theorem~\ref{t.main} gives an exponential tail estimate for the probability in \eqref{e.main}, while the result in \cite{efficient} was limited to a variance estimate. We expect the estimate \eqref{e.main} to be sharp, in the sense that we do not expect that it is possible to replace $t$ by $t^\alpha$ on the right side of \eqref{e.main} for an exponent $\alpha > 1$ that would be independent of $\ep > 0$.

\smallskip

The implementation of the method proposed in Theorem~\ref{t.main} requires the accurate calculation of $\nabla v_0$ and of $v_0, \ldots, v_n$ in $L^2$ over the progressively smaller and smaller balls $B(0,r_0), \ldots, B(0,r_n)$. As stated in~\eqref{e.def.vk}, the equation satisfied by $v_k$ is posed over the full space~$\Rd$. In practice, we can approximate these problems by selecting a sufficiently large constant~$C_{\mathrm{bl}}$ (``bl'' for ``boundary layer''), and then solving for $\td v_k \in H^1_0(B(0,r_k + C_{\mathrm{bl}} (1+n)2^\frac k 2))$ solution to
\begin{equation}  
\label{e.def.tdvk}
(2^{-k} - \nabla \cdot \a \nabla) \td v_k = 2^{-k} \, \td v_{k-1} \qquad \text{in } B(0,r_k + C_{\mathrm{bl}} (1+n)2^\frac k 2),
\end{equation}
with null Dirichlet boundary condition on $\dr B(0,r_k + C_{\mathrm{bl}} (1+n)2^\frac k 2)$, and where we have set $\td v_{-1} := v_{-1}$. The error in this approximation decays exponentially fast as we increase $C_{\mathrm{bl}}$ (this can be proved using that the Green function decays like $\exp ( -2^{-\frac k 2} |x| )$). As a rule of thumb, one should think of choosing $C_{\mathrm{bl}}$ of the order of $\sqrt{|\a|}$, where $|\a|$ is a measure of the typical size of the eigenvalues of $\a(x)$ (or, to be more specific, one can take $C_{\mathrm{bl}}$ to be of the order of $\sqrt{\Lambda}$). The additional multiplicative factor of $(1+n)$ allows for a progressive increase of the boundary layer as we increase $n$ and aim for finer and finer approximations of $\ahom$. A simple error analysis suggests that the optimal choice for the size of the boundary layer should be an affine function of $n$, and we chose it to be a multiple of $(1+n)$ for simplicity, but more refined choices can save some computations.

\smallskip

For simplicity, we implemented the version of the method described in Theorem~\ref{t.main} with $\ep = 0$. Strictly speaking, this case is not covered by Theorem~\ref{t.main}, but a modification of the arguments presented below would in this case yield \eqref{e.main} with $2^{-\frac{nd}{2}}$ replaced by $2^{-\frac{nd}{2}(1-\delta)}$, for arbitrary $\delta > 0$. (The constant $c > 0$ on the right side would then depend on $\delta$.) 

\smallskip

The main power of the method comes from the fact that it splits the problem of calculating $\xi\cdot \ahom \xi$ into multiple scales. Heuristically, the term $v_k$ (or $\td v_k$) is meant to capture information related to length scales of the order of $2^{\frac k 2}$. When $k$ is small, the elliptic problem~\eqref{e.def.tdvk} is well-conditioned and has a very small boundary layer, of essentially unit size. As $k$ is increased, the elliptic problems in \eqref{e.def.tdvk} become less well-conditioned and involve larger boundary layers. Yet, this is more than compensated by the fact that the domain of interest is rapidly shrinking. In practice, the main part of the computational effort is spent on calculating $v_0$.

\smallskip

Compared with the discrete setting of finite-difference operators investigated in~\cite{efficient}, the case of continuous differential operators considered here poses crucial new challenges from a computational perspective. With applications such as those in materials science in mind, it is natural to consider piecewise constant coefficient fields. We choose to focus more specifically on the case when the coefficient field is constant over each unit square or cube of the form $z + [0,1)^d$, where $z \in \Zd$. At least in dimension $d = 2$, this class is essentially the most difficult possible, in the sense that solutions then have the worst possible regularity properties, given the ellipticity contrast---see Subsection~\ref{ss.rough} for a more precise discussion. As a consequence of the roughness of the solutions, a ``coarsest possible'' discretization of the coefficient field into finite elements with constant coefficients would yield widely incorrect results. To wit, the algorithm as proposed here would run just fine, but it would compute the homogenized matrix associated with the particular finite-element discretization that is chosen; if the discretization is coarse, then this homogenized matrix will be \emph{far} from the homogenized matrix of the continuous operator. 

\smallskip

To remedy this problem, we thus need to rely on much finer discretizations of the coefficient field. Our method for doing so is strongly inspired by the idea of \emph{hierarchical hybrid grids} developed in \cite{ber04, ber05}. In a nutshell, the starting point is the observation that numerical schemes on fully structured grids with constant coefficients are highly efficient, both from the point of view of time and of memory usage. Unfortunately, the problem we wish to solve is not of this type, since the coefficients vary accross the domain. The idea then is to deploy a hybrid representation of the problem, using an unstructured coarse grid to represent the variations of the coefficient field on the one hand, and then proceeding to refine each coarse element in a ``fully structured'' manner. This allows for very significant gains in memory usage, which is otherwise a fundamental bottleneck, and also in execution time.

\smallskip

We did not make any effort to fine-tune the parameters of the method presented in Theorem~\ref{t.main}. We indicate here some possible directions for doing so. First, the choice to use successive powers of $2$ in \eqref{e.def.vk} can be replaced by any other real number larger than $1$, up to suitable modifications of the expression in \eqref{e.def.hatsigma}. Second, for the radii $r_k$ appearing in \eqref{e.def.hatsigma}, we simply followed the prescription of the theoretical result with $\ep = 0$, that is, $r_k = 2^{n-\frac k 2}$. A more fine-tuned method would consist in evaluating the fluctuations of integral averages on the fly and adaptively tune~$r_k$ so that the fluctuations of the average get below a certain threshold of the order of $2^{-\frac {nd}{2}}$. Finally, the requirements for accuracy are different for $v_0$, which needs to be controlled in $H^1$, and for the subsequent $v_k$'s which only need to be controlled in $L^2$. We did not try to exploit this feature either, and used approximations of the same quality for all terms.

\smallskip

Although we did not explore this possibility, we point out that the required computations can be performed in parallel in a straightforward way. For instance, instead of computing 
\begin{equation*}  
\int_{\Rd} \Ll( -\a\xi \cdot \nabla v_0 + v_0^2 \Rr) \chi_{r_0},
\end{equation*}
if one has access to $L^d$ processors, then one can compute 
\begin{equation*}  
\sum_{\ell = 1}^{L^d} \int_{\Rd} \Ll( -\a\xi \cdot \nabla v_0^{(\ell)} + (v_0^{(\ell)})^2 \Rr) \chi_{\frac{r_0}{L}},
\end{equation*}
where $(v_0^{(\ell)})_{1 \le \ell \le L^d}$ are versions of $v_0$ computed on $L^d$ independent realizations of the coefficient field. These computations can obviously be performed without any communication between processors. (If one is given a very large snapshot of a single environment, then effectively independent realizations can be obtained by considering sufficiently distant subregions of the large sample.) See also \cite{gra08} for more refined techniques allowing for the parallelization of finite-element methods with hierarchical hybrid grids.

\smallskip

For simplicity, we assume here that the coefficient field is uniformly elliptic and with a finite range of dependence. However, we expect the results presented here to hold in much greater generality. In particular, we expect that a result comparable with \eqref{e.main}, but possibly with a more slowly decaying function of $t$ on the right side, should hold whenever the local statistics of the coefficient field satisfy a central limit theorem. For more strongly correlated coefficient fields, the method is still of interest, but the choice of $r_k$ in \eqref{e.def.rk} and the term $2^{-\frac{nd}{2}}$ in \eqref{e.main} will have to be suitably modified. (This makes the development of a more adaptive algorithm particularly appealing, since such an algorithm could automatically select the optimal scalings without supervision.) Also, in view of \cite{AD,D1}, we expect that the results can be generalized to the case of perforated domains of percolation type.


\subsection{Related works}

Over the last decade, an intensive research effort has been devoted to developing theoretical quantitative results on stochastic homogenization. The multiscale representation of the homogenized coefficients forming the basis of the method is inspired by the ``renormalization'' approach to quantitative stochastic homogenization, as developed in \cite{AS,AM, AKM, AKM2}; see also \cite{review} for a gentle introduction to this line of research and \cite{AKMbook} for a monograph. A related approach based on the parabolic flow was put forward in \cite{GO5}, see also \cite[Chapter~9]{AKMbook}, and will give us the most convenient statement for us to build upon here. A different approach based on concentration inequalities was put forward in \cite{GO1,GO2,GNO,MO,GO3, GNO3}, inspired by earlier insights from statistical mechanics \cite{NS2,NS}.

\smallskip

It has been observed long ago that inappropriate boundary conditions for ``approximate cell problems'' can cause important ``resonant errors'', and initial attempts at bypassing the problem involved the notion of oversampling \cite{hw, hwc,yue-e,eh-book}. A powerful approach has been studied in \cite{blb,GO1,GO2,Gloria, GloriaH, vardecay,approx,cemracs}, based on the introduction of a small zero-order term in the equation. The method we propose here, by combining this idea with a multiscale decomposition, enables to take fuller advantage of this idea. We refer to \cite{efficient} for a detailed comparison between the single-scale and the multiscale approaches. As is shown in \cite{ADE1}, the benefits of the multiscale approach can be seen even in the setting of periodic coefficient fields, if we operate under the constraint that the lattice of periods is unknown. 

\smallskip

One alternative method for computing homogenized coefficients,  based on the idea of an ``embedded corrector problem'', is proposed in \cite{can1,can2}. Well-separated spherical inclusions are considered in the numerical examples. This allows for fairly different approaches to practical calculations than what is pursued in the present paper (and also produces solutions that are more regular than in our examples with corner discontinuities). 

\smallskip

For coefficient fields that are very similar to those we investigate numerically here, the standard representative volume method was combined with a tensor-based discretization scheme in \cite{KKO} to compute homogenized matrices, in two dimensions. 
%
%
The authors of \cite{KKO} state that their numerical approximation method displays an empirical rate of convergence in $L^2$ of $O(h^\beta)$ with $\beta \ge 3/2$, where $h$ measures the size of a discretized element. We believe that this is an artefact of pre-asymptotic effects and moderate ellipticity contrast. Indeed, for any $\alpha > 0$, solutions can develop singularities that fail to be in $H^{1+\alpha}$, provided that the ellipticity contrast is sufficiently large, and standard finite-element methods provide approximations of these singular solutions that converge in $L^2$ at a rate that is bounded below by $c\, h^{1+\alpha}$. In fact, for coefficient fields arranged in a checkerboard-type pattern in two dimensions, as considered in~\cite{KKO} and in the present paper, one can identify exactly the optimal exponent of regularity in terms of the ellipticity contrast: solutions are $H^\beta$-regular if and only if $\beta < 1+\alpha$, where~$\alpha$ is given in~\eqref{e.exponent}, as proved in \cite{pic72} and recalled in Subsection~\ref{ss.rough} below. In particular, an asymptotic rate of convergence in $L^2$ of $O(h^{3/2})$ can only be obtained for values of the ellipticity contrast~$\Lambda$ below $3 + 2 \sqrt{2}$. We also refer to the right frame of Figure~\ref{fig:ex2_mean} for an illustration of pre-asymptotic effects and slow rates of convergence, for~$\Lambda = 90$.

\smallskip

Several techniques have been explored to reduce the size of the fluctuations of estimators for the homogenized matrix. In particular, control variate techniques and the selection of special realizations of the coefficient field, called ``quasi-random structures'', have been explored, see \cite{bla16,lebris-survey} for surveys. The latter approach, inspired by \cite{wei90,zun90} and, in the context of the homogenization of elliptic operators, advocated for in \cite{quasirandom}, has recently received a spectacular theoretical foundation in~\cite{fischer}. We would find it very  interesting to investigate how these techniques can be combined with those discussed in the present paper.

\smallskip

In a different direction, several works have considered the question of designing and effectively computing certain expansions of the homogenized matrix, in situations where the random medium can be seen as a small perturbation of a reference medium. The most typical scenario is that of a homogeneous medium with a small density of inclusions \cite{maxwell,rayleigh}. We refer to \cite{kozlov, papa-survey, bm1, alb1,alb2,almog1,almog2, dl-diff, DG,almog3} for works in this area. 

\smallskip

To conclude this introduction, we mention that the homogenized matrix can also be of use as part of a modified scheme of multigrid type for computing solutions of elliptic equations with rapidly oscillating coefficients. In short, the idea is to use the homogenized operator when operating on the coarser grids \cite{efficient}.

\subsection{Organization of the paper}
In Section~\ref{s.ass}, we lay down the notation and make our standing assumptions more precise. We also clarify the meaning of being a stationary solution to \eqref{e.def.vk}, and recall the definition of the homogenized matrix. We next prove a general multiscale representation of the homogenized matrix in Section~\ref{s.multi}. By ``general'', we mean that the finite-range dependence assumption on the coefficient field is not actually used there; assuming ergodicity instead would be sufficient. This is no longer the case in Section~\ref{s.quantitative}, where we strongly leverage on the finite-range dependence assumption to obtain sharp quantitative estimates on the different terms appearing in the multiscale decomposition. This allows us to conclude the proof of Theorem~\ref{t.main}. In Section~\ref{s.hhg}, we explain how to design a finite-element multigrid algorithm using the structure of hierarchical hybrid grids. Finally, we present our numerical results in Section~\ref{s.numerics}. Our code is freely available in the GitHub repository indicated in~\eqref{e.github}.

%
%
%
%
%
%

\section{Assumptions, notation, and definition of homogenized matrix}
\label{s.ass}

\subsection{Precise statement of the assumptions}
\label{ss.assump}

We fix a constant $\Lambda \in [1,\infty)$ and an integer $d \ge 1$ throughout the paper. We denote by $\Omega$ the set of measurable mappings from $\Rd$ to the set of $d$-by-$d$ symmetric matrices which satisfy, for almost every $x \in \Rd$, 
\begin{equation}  
\label{e.unif.ell}
\forall \xi \in \Rd, \qquad \Lambda^{-1} |\xi|^2 \le \xi \cdot \a(x) \xi \le \Lambda |\xi|^2\ .
\end{equation}
For each Borel set $U \subset \Rd$, we denote by $\F_U$ the $\sigma$-algebra generated by the mappings
\begin{equation*}  
\a \mapsto \int_\Rd \phi \, \a, \qquad \phi \in C^\infty_c(U)
, 
\end{equation*}
where $C^\infty_c(U)$ denotes the set of smooth functions with compact support in $U$. We also use the shorthand $\F := \F_\Rd$. For each $y \in \Rd$, we denote by $T_y : \Omega \to \Omega$ the action of translation by $y$ on $\Omega$, which is such that, for every $x \in \Rd$,
\begin{equation*}  
T_y \a(x) = \a(x+y). 
\end{equation*}
Translations can also operate on events, that is, for every $E \in \F$ and $y \in \Rd$, we set $T_y E := \{T_y \a \ : \ \a \in E\}$.

\smallskip

We assume that we are given a probability measure $\P$ on $(\Omega,\F)$ that, in addition to \eqref{e.unif.ell}, satisfies the following properties:
\begin{itemize}  
\item stationarity with respect to $\Zd$ translations: for every $z \in \Zd$, we have
\begin{equation}  
\label{e.stationary}
\P \circ T_z = \P \; ;
\end{equation}
\item unit range of dependence: whenever two Borel sets $U,V \subset \Rd$ are at least at distance $1$ away from one another, we have that $\F_U$ and $\F_V$ are independent. 
\end{itemize}
If the latter condition was satisfied with the constant $1$ replaced by another fixed number, then we could reduce to the present setting by scaling. Similarly, if stationarity was known to hold along some lattice of $\Rd$, then we could use a change of coordinates to set it to be $\Zd$ as in our current assumption.

\subsection{General notation and function spaces}
We write $\N = \{0,1,\ldots\}$, and denote the open Euclidean ball centered at $x \in \Rd$ and of radius $r > 0$ by $B(x,r)$. We define the heat kernel at time $t > 0$ and position $x \in \Rd$ by
\begin{equation}  
\label{e.def.heatkernel}
\Phi(t,x) := (4\pi t)^{-\frac d 2} \exp \Ll( - \frac {|x|^2}{4t} \Rr) .
\end{equation}
For every Borel measurable set $U \subset \Rd$, we denote by $|U|$ the Lebesgue measure of $U$. Whenever $|U| \in (0,\infty)$, we set, for every $f \in L^1(U)$,
\begin{equation}  
\label{e.def.fint}
\fint_U f := \frac{1}{|U|} \int_U f = \frac{1}{|U|} \int_U f(x) \, \d x.
\end{equation}
For each $p \in [1,\infty)$, we define the rescaled $L^p$ norm of a measurable function $f$ as
\begin{equation*}  
\|f\|_{\un L^p(U)} := \Ll( \fint_U |f|^p \Rr) ^\frac 1 p.
\end{equation*}
For each $\ell \in \N \setminus \{0\}$, we denote by $H^\ell(U)$ the classical Sobolev space, with rescaled norm
\begin{equation}  
\label{e.def.Hell}
\|f\|_{\un H^\ell(U)} := \sum_{j = 0}^\ell |U|^{-\frac{\ell - j}{d}}\|\nabla^j f\|_{\un L^2(U)}.
\end{equation}
We denote by $H^{\ell}_0(U)$ the closure in $H^\ell(U)$ of the space $C^\infty_c(U)$ of smooth functions with compact support in $U$, and by $H^{-\ell}(U)$ the dual space to $H^\ell_0(U)$, equipped with the rescaled norm
\begin{equation}  
\label{e.def.H-ell}
\|f\|_{\un H^{-\ell}(U)} := \sup \Ll\{ \fint_U fg \ : \ g \in H^\ell_0(U) \text{ such that }\|g\|_{\un H^\ell(U)} \le 1 \Rr\}.
\end{equation}
In the expression above, we used the notation $\fint_U fg$ to denote the duality pairing between $H^{-\ell}(U)$ and $H^\ell_0(U)$ that is normalized in such a way that whenever $f$ and~$g$ are smooth, the evaluation of this duality pairing coincides with the value of the integral $\fint_U fg$.

\subsection{Notation for random variables}

In order to have concise means to express the size of random variables at our disposal, we write, for each real random variable $X$ and $s,\theta > 0$,
\begin{equation*}  
X \le \O_s(\theta) \quad \iff \quad \E \Ll[\exp \Ll( \theta^{-1} \max(X,0))^s \Rr)  \Rr] \le 2.
\end{equation*}
We also write 
\begin{equation*}  
X = \O_s(\theta) \quad \iff \quad X \le \O_s(\theta) \text{ and } - X \le \O_s(\theta).
\end{equation*}
The notation is homogeneous: we have $X \le \O_s(\theta)$ if and only if $\theta^{-1} X \le \O_s(1)$. Informally, the statement that $X \le \O_s(1)$ means that the right tail of the law of $X$ decays like $\exp(-x^s)$. The following lemma makes this precise; see \cite[Lemma~A.1]{AKMbook} for a proof.
\begin{lemma}
\label{l.bigO.vs.tail}
For every random variable $X$ and $s,\theta \in (0,\infty)$,
\begin{equation*}  
X \le \O_s(\theta) \quad \implies \quad \forall x \ge 0, \quad \P\Ll[ X \ge \theta x \Rr] \le  2 \exp \Ll( - x^s \Rr) ,
\end{equation*}
and 
\begin{equation*}  
\forall x \ge 0, \quad \P\Ll[ X \ge \theta x \Rr] \le \exp \Ll( - x^s \Rr) \quad \implies \quad X \le \O_s \Ll(2^\frac 1 s \, \theta\Rr).
\end{equation*}
\end{lemma}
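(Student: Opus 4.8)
The statement to prove is the tail-vs-$\O_s$ equivalence recorded as Lemma~\ref{l.bigO.vs.tail}. Here is how I would argue it.

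\medskip

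\textbf{Proof proposal.} The plan is to unwind the definition $X \le \O_s(\theta) \iff \E[\exp((\theta^{-1}\max(X,0))^s)] \le 2$ and play it off against the Markov inequality in both directions. For the first implication, suppose $X \le \O_s(\theta)$. Fix $x \ge 0$. Since $t \mapsto \exp(t^s)$ is nondecreasing on $[0,\infty)$, the event $\{X \ge \theta x\}$ is contained in $\{\exp((\theta^{-1}\max(X,0))^s) \ge \exp(x^s)\}$. Applying Markov's inequality to the nonnegative random variable $\exp((\theta^{-1}\max(X,0))^s)$ gives
\begin{equation*}
\P[X \ge \theta x] \le \exp(-x^s)\, \E\Ll[\exp\Ll((\theta^{-1}\max(X,0))^s\Rr)\Rr] \le 2\exp(-x^s),
\end{equation*}
which is exactly the claimed tail bound. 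By homogeneity of the notation it suffices to treat $\theta = 1$ and then rescale, but writing $\theta$ throughout costs nothing.

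\medskip

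For the converse, assume $\P[X \ge \theta x] \le \exp(-x^s)$ for all $x \ge 0$, and set $\theta' := 2^{1/s}\theta$. I would show $\E[\exp((\theta'^{-1}\max(X,0))^s)] \le 2$ by the layer-cake (distribution-function) formula: for a nonnegative random variable $Y$ and an increasing, absolutely continuous $\varphi$ with $\varphi(0)=0$,
\begin{equation*}
\E[\varphi(Y)] = \int_0^\infty \varphi'(y)\, \P[Y > y]\, \d y .
\end{equation*}
Taking $Y = \theta'^{-1}\max(X,0)$ and $\varphi(y) = \exp(y^s) - 1$ (so $\varphi(0)=0$, which makes the layer-cake identity clean), we get
\begin{equation*}
\E\Ll[\exp\Ll(Y^s\Rr)\Rr] - 1 = \int_0^\infty s y^{s-1} e^{y^s}\, \P\Ll[\max(X,0) > \theta' y\Rr]\, \d y \le \int_0^\infty s y^{s-1} e^{y^s}\, \exp\Ll(-(2^{1/s}y)^s\Rr)\, \d y,
\end{equation*}
where the last step uses the hypothesis with $x = 2^{1/s} y$ (valid since $\P[\max(X,0) > \theta' y] \le \P[X \ge \theta' y] = \P[X \ge \theta \cdot 2^{1/s} y] \le \exp(-2 y^s)$). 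The remaining integral is $\int_0^\infty s y^{s-1} e^{-y^s}\, \d y$, which equals $1$ after the substitution $u = y^s$. Hence $\E[\exp(Y^s)] \le 2$, i.e. $X \le \O_s(2^{1/s}\theta)$.

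\medskip

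\textbf{Main obstacle.} There is no serious obstacle; the only points requiring a little care are (i) getting the constants to line up — the factor $2^{1/s}$ in the converse is exactly what is needed so that $e^{y^s} \cdot e^{-2y^s} = e^{-y^s}$ integrates against $s y^{s-1}\,\d y$ to give $1$, leaving room for the ``$-1$'' and the final ``$\le 2$''; and (ii) justifying the layer-cake identity and the interchange of expectation and integral, which is immediate by Tonelli since everything is nonnegative. One should also note that replacing $\P[\max(X,0) > \theta' y]$ by $\P[X > \theta' y]$ in the bound is harmless because for $y > 0$ the two events coincide (when $\theta' y > 0$, $\max(X,0) > \theta' y \iff X > \theta' y$), and the $y = 0$ contribution to the integral vanishes. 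This is the content of \cite[Lemma~A.1]{AKMbook}, to which one may simply refer.
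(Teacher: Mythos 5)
Your proof is correct and is essentially the standard argument; the paper itself does not give a proof but simply cites \cite[Lemma~A.1]{AKMbook}, whose proof proceeds along exactly these lines (Markov's inequality in one direction, the layer-cake formula with $\varphi(y)=e^{y^s}-1$ in the other, with $2^{1/s}$ chosen so that the resulting $\int_0^\infty sy^{s-1}e^{-y^s}\,\d y = 1$ leaves exactly the slack needed for the constant $2$). The constants line up as you say, and the minor point about replacing $\P[\max(X,0)>\theta' y]$ by $\P[X\ge\theta' y]$ for $y>0$ is handled correctly.
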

The notion of $\O_s$-bounded random variables is stable under averaging, as the next lemma shows (see \cite[Lemma~A.4]{AKMbook} for a proof).
\begin{lemma}
\label{l.sum-O}
Let $s \in [1,\infty)$, $\mu$ be a measure over an arbitrary measurable space $E$, $\theta : E \to (0,\infty)$ be a measurable function and $(X(x))_{x \in E}$ be a jointly measurable family of nonnegative random variables. We have
\begin{equation*}  
\forall  x \in E, \quad X(x) \le \O_s(\theta(x)) \quad \implies \quad \int X \, d\mu \le  \O_s \Ll(\int \theta \, d \mu \Rr).
\end{equation*}
\end{lemma}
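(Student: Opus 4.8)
This is the statement that the $\O_s$-bound is stable under averaging against a measure, and the natural tool is Jensen's inequality with respect to the probability measure obtained by normalizing $\theta\,d\mu$. Set $\Theta:=\int\theta\,d\mu$. If $\Theta=\infty$ there is nothing to prove (every nonnegative random variable satisfies $\le\O_s(\infty)$), and if $\mu$ vanishes the conclusion is trivial, so assume $\Theta\in(0,\infty)$.

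The plan is the following. First I would record that for $s\ge1$ the map $\phi(t):=\exp(t^s)$ is convex and nondecreasing on $[0,\infty)$; this follows from $\phi''(t)=s\,t^{s-2}e^{t^s}\bigl((s-1)+s\,t^s\bigr)\ge0$, and this is the only place the hypothesis $s\ge1$ enters --- it is genuinely needed, since the statement is false for $s<1$. Next, since $\theta\,d\mu/\Theta$ is a probability measure on $E$ and $X/\theta\ge0$, Jensen's inequality applied pointwise in $\omega$ gives
\begin{equation*}
\exp\!\left(\left(\frac{1}{\Theta}\int X\,d\mu\right)^{\!s}\right)
=\phi\!\left(\int\frac{X}{\theta}\,\frac{\theta\,d\mu}{\Theta}\right)
\le\int\phi\!\left(\frac{X}{\theta}\right)\frac{\theta\,d\mu}{\Theta}
=\int\exp\!\left(\left(\frac{X(x)}{\theta(x)}\right)^{\!s}\right)\frac{\theta(x)\,d\mu(x)}{\Theta}.
\end{equation*}
Finally I would take expectations and invoke Tonelli's theorem to exchange $\E$ and $\int\,d\mu$ (legitimate: all integrands are nonnegative, and the joint measurability of $(X(x))_{x\in E}$ makes the relevant maps measurable), obtaining
\begin{equation*}
\E\!\left[\exp\!\left(\left(\frac{1}{\Theta}\int X\,d\mu\right)^{\!s}\right)\right]
\le\int\E\!\left[\exp\!\left(\left(\frac{X(x)}{\theta(x)}\right)^{\!s}\right)\right]\frac{\theta(x)\,d\mu(x)}{\Theta}
\le\int 2\,\frac{\theta(x)\,d\mu(x)}{\Theta}=2,
\end{equation*}
where the last bound is exactly the hypothesis $X(x)\le\O_s(\theta(x))$ (using $X(x)\ge0$, so $\max(X(x),0)=X(x)$). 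By definition of $\le\O_s$ this reads $\int X\,d\mu\le\O_s(\Theta)$, which is the claim; the displayed finiteness moreover shows $\int X\,d\mu<\infty$ almost surely, so no integrability issue arises. I do not foresee any real obstacle: the only points needing a moment's care are the convexity of $\phi$ (hence the role of $s\ge1$) and the application of Tonelli, both routine.
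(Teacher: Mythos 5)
Your proof is correct and follows essentially the same route as the cited reference (\cite[Lemma~A.4]{AKMbook}): normalize $\theta\,d\mu$ to a probability measure, apply Jensen's inequality to the convex increasing function $t\mapsto\exp(t^s)$ (where $s\ge1$ is used precisely for this convexity), and interchange $\E$ and $\int d\mu$ by Tonelli. Nothing is missing.
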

The key mechanism by which we will witness stochastic cancellations is by appealing to the following lemma.
\begin{lemma}
\label{l.barO.boxes}
For every $s \in (1,2]$, there exists a constant $C(s) < \infty$ such that the following holds. 
Let $\theta > 0$, $R \ge 1$, $\mcl Z \subset R\Z^d$, and for each $x \in \mcl Z$, let $X(x)$ be an $\mcl F(x + (-R,R)^d)$-measurable \emph{centered} random variable such that $X(x) = \O_s(\theta(x))$. We have
\begin{equation}
\label{e.barO.boxes}
\sum_{x \in \mcl Z} X(x) = \O_s\Ll(C\, \Big( \sum_{x \in \mcl Z} \theta(x)^2 \Big)^{\frac 1 2} \Rr).
\end{equation}
\end{lemma}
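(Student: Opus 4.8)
\textbf{Proof plan for Lemma~\ref{l.barO.boxes}.}
The plan is to reduce the estimate for a sum of independent, centered, $\O_s$-bounded random variables to a direct moment generating function computation, exploiting that the terms $X(x)$ indexed by $x \in \mcl Z \subset R\Z^d$ are \emph{not} themselves independent, but can be split into finitely many subfamilies that are. First I would observe that since $X(x)$ is $\mcl F(x + (-R,R)^d)$-measurable, two indices $x,x' \in R\Z^d$ give independent random variables as soon as the cubes $x + (-R,R)^d$ and $x' + (-R,R)^d$ are at distance at least $1$ apart; in particular this holds whenever $x \ne x'$ and $x - x' \in 2R\Z^d$. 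Hence I would partition $\mcl Z$ into $N := (2^d \text{ or so})$ classes $\mcl Z = \mcl Z_1 \cup \cdots \cup \mcl Z_N$ according to the residue of $x/R$ modulo $2$ in each coordinate, so that within each class the variables $(X(x))_{x \in \mcl Z_j}$ are mutually independent. By the triangle-type inequality for $\O_s$ (which follows from $\|{\cdot}\|_{L^s}$-type reasoning, or more simply: if $Y_j \le \O_s(\eta_j)$ then $\sum_j Y_j \le \O_s(C_N \sum_j \eta_j)$, a consequence of Hölder applied to the defining Orlicz condition), it suffices to prove \eqref{e.barO.boxes} for each independent subfamily separately, at the cost of a dimensional constant. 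So I reduce to: if $(X(x))_{x \in \mcl Z}$ are \emph{independent}, centered, with $X(x) = \O_s(\theta(x))$, then $\sum_x X(x) = \O_s\bigl(C(\sum_x \theta(x)^2)^{1/2}\bigr)$.

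For the independent case, the natural route is a Bernstein/sub-exponential moment generating function bound. Since $s \in (1,2]$, the tail of each $X(x)$ is at least as light as sub-exponential, so there is a standard estimate of the form $\E[\exp(\lambda X(x))] \le \exp(C \lambda^2 \theta(x)^2)$ valid for $|\lambda| \le c/\theta(x)$, and a weaker bound $\E[\exp(\lambda X(x))] \le \exp(C (\lambda \theta(x))^{s'})$ for larger $\lambda$, where $s'$ is the conjugate exponent of $s$ (note $s' \ge 2$). Actually for the stated conclusion — which only asks for an $\O_s$ bound, i.e. an $\exp(-x^s)$ tail on the sum, not a Gaussian tail — one can be even more economical: I would use that centeredness plus $X(x) = \O_s(\theta(x))$ with $s \le 2$ gives $\E[\exp(\lambda X(x))] \le \exp(C\lambda^2\theta(x)^2)$ for \emph{all} $|\lambda|$ below a threshold scaling like $\theta(x)^{-1}$, take the product over the independent family, optimize $\lambda$, and feed the resulting two-regime tail bound (Gaussian for moderate deviations, governed by $\sum_x \theta(x)^2$; sub-exponential-type for large deviations) back through Lemma~\ref{l.bigO.vs.tail}. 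Because the Gaussian regime is \emph{stronger} than an $\exp(-x^s)$ tail for $s < 2$ and matches it for $s = 2$, and the large-deviation regime is controlled by $\max_x \theta(x) \le (\sum_x \theta(x)^2)^{1/2}$, in both regimes the resulting tail is bounded by $2\exp(-(x/C)^s)$ with $x$ measured in units of $(\sum_x\theta(x)^2)^{1/2}$, which is exactly \eqref{e.barO.boxes}.

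Concretely, the steps in order are: (i) record the elementary fact that $Y = \O_s(\eta)$ centered with $s \le 2$ implies $\E[e^{\lambda Y}] \le \exp(C_s \lambda^2 \eta^2)$ for $|\lambda \eta| \le c_s$ — this is the one genuinely computational lemma and is where the hypotheses $s \le 2$ and centeredness are used (for $|\lambda\eta|$ large one uses instead the crude bound from the Orlicz norm); (ii) partition $\mcl Z$ into $O(2^d)$ independence classes using the finite-range/measurability structure; (iii) within one class, multiply the MGF bounds, giving $\E[\exp(\lambda \sum_x X(x))] \le \exp(C_s\lambda^2 \Theta)$ with $\Theta := \sum_x \theta(x)^2$, valid for $|\lambda| \le c_s / \max_x \theta(x)$; (iv) run the Chernoff bound: for deviation $u$, optimizing over the admissible range of $\lambda$ yields $\P[\sum_x X(x) \ge u] \le \exp(-c\min(u^2/\Theta,\; u/\max_x\theta(x)))$, and since $\max_x\theta(x) \le \Theta^{1/2}$ this is $\le \exp(-c(u/\Theta^{1/2})^2)$ when $u \le \Theta^{1/2}$ and $\le \exp(-c(u/\Theta^{1/2}))$ otherwise, in all cases $\le \exp(-c(u/\Theta^{1/2})^s)$; (v) convert back via Lemma~\ref{l.bigO.vs.tail} and reassemble the classes via the $\O_s$ triangle inequality, absorbing the finitely many dimensional factors into $C(s)$. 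The main obstacle is step (i): getting the quadratic MGF control near the origin from an $\O_s$ bound for general $s \in (1,2]$ requires care — one expands $e^{\lambda Y} = 1 + \lambda Y + \sum_{k\ge 2}\lambda^k Y^k/k!$, uses $\E[Y]=0$ to kill the linear term, and bounds $\E[|Y|^k] \le C^k k!^{1/s}\eta^k$ from the Orlicz condition, so that the series is dominated by $\sum_{k\ge 2}(C|\lambda|\eta)^k k!^{1/s - 1}$, which for $s \le 2$ has $k!^{1/s-1} \le k!^{-1/2}$ summable-to-$\exp(O((\lambda\eta)^2))$-type behavior; making this clean is the crux. Everything else is bookkeeping with absolute constants depending only on $s$ and $d$, the latter being absorbed since $d$ is fixed and the number of independence classes is at most $3^d$.
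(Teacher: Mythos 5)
Your overall plan — partition $\mcl Z$ into finitely many independence classes using the finite-range-of-dependence structure, reduce to independent summands via Lemma~\ref{l.sum-O}, then run a Chernoff/moment-generating-function argument and convert back with Lemma~\ref{l.bigO.vs.tail} — is the standard route, and it is indeed what the lemmas cited in the paper (\cite[Lemmas~A.7 and A.11]{AKMbook}) do; the paper gives no proof beyond that citation. Two small slips first: partitioning by residues of $x/R$ modulo $2$ is not quite enough, since two cubes $x+(-R,R)^d$ and $x'+(-R,R)^d$ with $x-x'\in 2R\Z^d$ can touch and hence need not be at distance $\ge 1$; one needs residues modulo $3$, i.e.\ $3^d$ classes (which you do write at the end, so this is only an inconsistency of exposition). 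Also, the inequality ``$k!^{1/s-1}\le k!^{-1/2}$ for $s\le 2$'' in step~(i) is reversed (for $s\le 2$ one has $1/s-1\ge -1/2$, hence $k!^{1/s-1}\ge k!^{-1/2}$); this does not break step~(i) since the series is still dominated by a geometric one on the restricted range $|\lambda\eta|\le c_s$, but the parenthetical ``summable-to-$\exp(O((\lambda\eta)^2))$'' claim is only valid for $s=2$.

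The genuine gap is in step~(iv). The ``economical'' Chernoff bound you run uses only the quadratic MGF control on the restricted range $|\lambda|\le c_s/\max_x\theta(x)$, and this gives
\[
\P\Bigl[\sum_x X(x)\ge u\Bigr]\ \le\ \exp\Bigl(-c\min\bigl(u^2/\Theta,\ u/\max_x\theta(x)\bigr)\Bigr),\qquad \Theta:=\sum_x\theta(x)^2.
\]
Writing $v=u/\Theta^{1/2}$, the large-deviation regime gives a tail of order $\exp(-cv)$. But for $s\in(1,2]$ and $v$ large, $\exp(-cv)$ does \emph{not} dominate $\exp(-(v/C)^s)$, since $v^s\gg v$. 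So your claim ``in all cases $\le\exp(-c(u/\Theta^{1/2})^s)$'' is false; the economical argument only yields $\O_1$-integrability of the sum, not $\O_s$. (A single exponential random variable has a quadratic MGF bound near the origin and an $\exp(-cv)$ tail, illustrating the phenomenon.) The resolution is precisely the ``weaker'' bound you mention and then set aside: for $\lambda\theta(x)$ beyond the quadratic threshold, use $\E[e^{\lambda X(x)}]\le\exp(C(\lambda\theta(x))^{s'})$ with $s'=s/(s-1)\ge 2$ (which follows from Young's inequality applied to the Orlicz condition). Combining the two regimes gives, after summing over $x$ and using $\max_x\theta(x)\le\Theta^{1/2}$,
\[
\log\E\Bigl[\exp\Bigl(\lambda\sum_x X(x)\Bigr)\Bigr]\ \le\ C\bigl(\lambda\Theta^{1/2}\bigr)^2 + C\bigl(\lambda\Theta^{1/2}\bigr)^{s'},
\]
whose Legendre transform is of order $\min(v^2,v^s)$. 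Since $s\le 2$ this equals $v^s$ for $v\ge 1$, and that is exactly the tail needed to conclude $\sum_x X(x)=\O_s(C\Theta^{1/2})$ via Lemma~\ref{l.bigO.vs.tail}. In short: steps (i)--(iii) and (v) are sound, but step~(iv) must carry the $s'$-regime of the MGF through the Chernoff optimization rather than discard it.
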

This lemma is a consequence of \cite[Lemmas~A.7 and A.11]{AKMbook}. Notice that when specializing Lemma~\ref{l.barO.boxes} to the case when $\theta(x) \equiv \theta$ does not depend on $x$, and denoting by $N$ the cardinality of $\mcl Z$, we can rewrite the right side of \eqref{e.barO.boxes} as $\O_s \Ll(C N^\frac 1 2 \theta\Rr)$. The term $N^\frac 12$ is consistent with the scaling of the central limit theorem.

\subsection{Definition of homogenized matrix}
We now introduce notions related to stationary random fields and solutions, and recall the definition of the homogenized coefficients in terms of correctors. 
A \emph{stationary random field} is a measurable mapping $f : \Rd \times \Omega \to \R^n$ (for some $n \in \N$) such that for every $x \in \Rd$, $z \in \Zd$ and $\a \in \Omega$,
\begin{equation*}  
f(x+z,\a) = f(x,T_z \a).
\end{equation*}
We may also simply say that the mapping $f$ is \emph{stationary}. For instance, the mapping $x \mapsto \a(x)$ itself is stationary, so the terminology is consistent with the definition in~\eqref{e.stationary}. As is standard with random objects, most of the time we do not display that a random field $f$ depends explicitly on $\a$, and simply write $f(x)$ in place of $f(x,\a)$. Extending the notation introduced in \eqref{e.def.fint},  whenever $f \in L^1_{\mathrm{loc}}(\Rd)$ is a stationary random field, we write
\begin{equation}  
\label{e.def.fintR}
\fint_\Rd f := \lim_{r \to \infty} \fint_{|x| \le r} f(x) \, \d x = \E \Ll[ \int_\T f(x) \, \d x \Rr].
\end{equation}
That this limit exists and equals the expectation on the right side follows from the ergodic theorem, see \cite{AK}.
For every $p \in [1,\infty]$, we write
\begin{equation*}  
\L^p := \Ll\{f \ : \ f \in L^p_{\mathrm{loc}}(\Rd) \text{ is a stationary random field} \Rr\},
\end{equation*}
equipped with the norm
\begin{equation*}  
\|f\|_{\L^p} := \Ll( \fint_\Rd |f|^p \Rr)^\frac 1 p.
\end{equation*}
In the case $p = \infty$, the right side is interpreted as
\begin{equation*}  
\lim_{r \to \infty} \|f\|_{L^\infty(B(0,r))},
\end{equation*}
which is also the essential supremum of the random variable $\|f\|_{L^\infty(\T)}$. We denote by $\Lpot$ the completion in $\L^2$ of the set
\begin{equation*}  
\Ll\{\nabla f \ : \ f \in C^\infty(\Rd) \text{ is a stationary random field} \Rr\}.
\end{equation*}
We also define
\begin{equation*}  
\H^1 := \Ll\{ f \in \L^2 \ : \ \nabla f \in \L^2 \Rr\},
\end{equation*}
equipped with the norm
\begin{equation*}  
\|f\|_{\H^1} := \Ll( \fint_\Rd \Ll(|f|^2 + |\nabla f|^2 \Rr)\Rr)^\frac 1 2.
\end{equation*}
Any element of $\Lpot$ can be represented as the gradient of some function $f \in H^1_{\mathrm{loc}}(\Rd)$, and such a function $f$ is defined uniquely up to a constant. However, due to the inderteminacy of this constant, the function $f$ itself may fail to be a stationary field, that is, we do not necessarily have $f \in \H^1$. We will always write elements of $\Lpot$ in the form $\nabla f$, bearing this caveat in mind. The functions $(v_k, k \in \N)$ are defined as elements of $\H^1$. The equation \eqref{e.def.vk} is interpreted, for $k = 0$, as
\begin{equation*}  
\forall w \in \H^1, \qquad \fint_\Rd \Ll(w \, v_0 + \nabla w \cdot \a \nabla v_0 \Rr) = -\fint_\Rd \nabla w \cdot \a \xi,
\end{equation*}
and, for every $k \ge 1$, as
\begin{equation}  
\label{e.weak.vk}
\forall w \in \H^1, \qquad \fint_\Rd \Ll(2^{-k} w \, v_k + \nabla w \cdot \a \nabla v_k \Rr) = \fint_\Rd 2^{-k} w \, v_{k-1}.
\end{equation}
For each $f \in \L^2$, we define the norm dual to the $\H^1$ norm by setting
\begin{equation*}  
\|f\|_{\H^{-1}} := \sup \Ll\{ \fint_\Rd f g \ : \ \|g\|_{\H^1} \le 1 \Rr\} ,
\end{equation*}
and we denote by $\H^{-1}$ the completion of $\L^2$ with respect to this norm. An example of an element of $\H^{-1}$ is $v_{-1}$, see \eqref{e.def.v-1}. By definition, for each $g \in \H^1$, the mapping 
\begin{equation}  
\label{e.mapping.to.extend}
\Ll\{
\begin{array}{rcl}  
\L^2 & \to & \R \\
f & \mapsto & \displaystyle{\fint_\Rd f g}
\end{array}
\Rr.
\end{equation}
extends to a continuous linear functional over $\H^{-1}$. Abusing notation slightly, we keep the same notation for the extension. By an integration by parts, we see that \eqref{e.weak.vk} also makes sense for $k = 0$, provided that the right side is understood in this extended sense (which is the canonical duality pairing between the spaces~$\H^{1}$ and~$\H^{-1}$).

\smallskip

Using the identity \eqref{e.def.fintR} and stationarity, one can check the following integration by parts formula: for every $f \in \H^1$ and $G \in (\H^1)^d$, we have
\begin{equation}  
\label{e.ibp}
\fint_\Rd \nabla f \cdot G = - \fint_\Rd f \, \nabla \cdot G.
\end{equation}
If we only assume $G \in (\L^2)^d$, then this formula allows to interpret $\nabla \cdot G$ as an element of $\H^{-1}$; similarly, if $f \in \L^2$, then we can interpret $\nabla f$ as an element of~$(\H^{-1})^d$.

\smallskip

The gradient of the \emph{corrector} in the direction of $\xi\in \Rd$ is the unique $\nabla \phi^{(\xi)} \in \Lpot$ that is a weak solution of
\begin{equation}  
\label{e.corrector.eq}
-\nabla \cdot \a (\xi + \nabla \phi^{(\xi)}) = 0.
\end{equation}
This equation is interpreted as
\begin{equation*}  
\text{for every } \nabla f \in \Lpot, \quad \fint_\Rd \nabla f \cdot \a(\xi + \nabla \phi^{(\xi)}) = 0.
\end{equation*}
The existence of $\nabla \phi^{(\xi)}$ can be obtained by considering, for every $\lambda \in (0,1]$, the approximation $\phi_{\lambda}^{(\xi)} \in \H^1$ which is a weak solution of the equation
\begin{equation}  
\label{e.eq.phi.lambda}
\lambda \phi_{\lambda}^{(\xi)} - \nabla \cdot \a (\xi + \nabla \phi_\lambda^{(\xi)}) = 0.
\end{equation}
It is indeed straightforward to verify that $\nabla \phi_{\lambda}^{(\xi)}$ is bounded in $\L^2$ uniformly over $\lambda \in (0,1]$, and that any weak limit must be a solution of \eqref{e.corrector.eq}. Moreover, the weak convergence of $\nabla \phi_{\lambda}^{(\xi)}$ to $\nabla \phi^{(\xi)}$ in $\L^2$ as $\lambda$ tends to $0$ can be improved to strong convergence, see e.g.\ \cite[(8.5)]{efficient}. That is, we have
\begin{equation}
\label{e.strong.L2}
\lim_{\lambda \to 0} \fint_\Rd |\nabla \phi^{(\xi)}_\lambda - \nabla \phi^{(\xi)}|^2 = 0.
\end{equation}
By definition, the homogenized matrix $\ahom$ is such that, for every $\xi \in \Rd$,
\begin{equation}  
\label{e.def.ahom}
\xi \cdot \ahom \xi = \fint_\Rd (\xi + \nabla \phi^{(\xi)}) \cdot \a(\xi + \nabla \phi^{(\xi)}).
\end{equation}
For the remainder of the paper, we will keep the unit vector $\xi \in \Rd$ fixed, and drop it from the notation: in particular, we now simply write $\phi$ in place of $\phi^{(\xi)}$.

%
%
%
%
%
%

\section{Multiscale representation}
\label{s.multi}

In this section, we give a multiscale representation of the homogenized matrix. That is, we rewrite $\ahom$ as the sum of a first term taking the form of an average of very local objects, and correction terms that involve progressively larger and larger length scales. This increase of the relevant length scale means that producing one relevant sample for the calculation of the correction term becomes progressively more difficult. Yet, the actual size of these correction terms becomes smaller and smaller, and thus fewer samples need to be averaged out in order to approximate the expected value of the quantity up to a given accuracy. Moreover, this beneficial effect more than compensates for the increase in computational effort required to obtain a single sample, and this is the main reason for the efficiency of the approach presented here.


%

\begin{proposition}[Multiscale representation of $\ahom$]
\label{p.multiscale}
Recall that we fixed $\xi \in \Rd$ of unit length, and that $v_{-1}, v_0, v_1,\ldots$ are defined in \eqref{e.def.v-1}-\eqref{e.def.vk}. For each $n \in \N$, the limit
\begin{equation}  
\label{e.def.Dn}
D_n := \lim_{\lambda \to 0} \fint_\Rd v_n \Ll( \lambda - \nabla \cdot \a \nabla \Rr)^{-1} v_n  
\end{equation}
exists and is finite. Moreover,
\begin{equation}  
\label{e.multiscale}
\xi\cdot \ahom \xi = \fint_\Rd \xi \cdot \a \xi -  \sum_{k = 0}^{n} 2^k \fint_\Rd \Ll( v_{k-1} v_k + v_k^2 \Rr) -  D_n.
\end{equation}
\end{proposition}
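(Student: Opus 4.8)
The plan is to recognise the whole right-hand side of \eqref{e.multiscale} as a telescoping sum over scales. Write $L := -\nabla\cdot\a\nabla$, understood through the Dirichlet form $(f,g)\mapsto\fint_\Rd\nabla f\cdot\a\nabla g$ on $\H^1$, and for $\lambda>0$ and $k\in\{-1,0,1,\dots\}$ set
\[
D_k^\lambda := \fint_\Rd v_k\,(\lambda+L)^{-1}v_k ,
\]
the case $k=-1$ being read as the pairing of $v_{-1}\in\H^{-1}$ with $(\lambda+L)^{-1}v_{-1}\in\H^1$. I would first deal with the index $k=-1$ by hand. Since $(\lambda+L)^{-1}v_{-1}$ is exactly the regularised corrector $\phi_\lambda$ of \eqref{e.eq.phi.lambda}, the integration by parts formula \eqref{e.ibp} (with $G=\a\xi\in(\L^2)^d$ and $v_{-1}=\nabla\cdot(\a\xi)$) gives $D_{-1}^\lambda = \fint_\Rd v_{-1}\phi_\lambda = -\fint_\Rd\nabla\phi_\lambda\cdot\a\xi$. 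Letting $\lambda\to0$ and using the strong $\L^2$ convergence \eqref{e.strong.L2} of $\nabla\phi_\lambda$ to $\nabla\phi$, the limit $D_{-1} := \lim_{\lambda\to0}D_{-1}^\lambda = -\fint_\Rd\nabla\phi\cdot\a\xi$ exists and is finite. On the other hand, combining \eqref{e.def.ahom} with the corrector equation tested against $\phi$ itself gives $\xi\cdot\ahom\xi = \fint_\Rd\xi\cdot\a\xi + \fint_\Rd\nabla\phi\cdot\a\xi = \fint_\Rd\xi\cdot\a\xi - D_{-1}$. Comparing with \eqref{e.multiscale}, the proposition reduces to the single identity $D_{-1} = D_n + \sum_{k=0}^n 2^k\fint_\Rd(v_{k-1}v_k+v_k^2)$.

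The heart of the matter is the one-step relation
\[
D_{k-1}-D_k = 2^k\fint_\Rd\left(v_{k-1}v_k+v_k^2\right)\qquad(k\ge0),
\]
together with the existence of the limits $D_k := \lim_{\lambda\to0}D_k^\lambda$. I would obtain it by rewriting the defining equation \eqref{e.def.vk} as the operator identity $v_{k-1}=v_k+2^k Lv_k$ (valid for all $k\ge0$) and combining it with the resolvent identity $(\lambda+L)^{-1}L = I-\lambda(\lambda+L)^{-1}$, which yields, with $w_k^\lambda := (\lambda+L)^{-1}v_k$,
\[
w_{k-1}^\lambda = (1-2^k\lambda)\,w_k^\lambda + 2^k v_k .
\]
Expanding $D_{k-1}^\lambda = \fint_\Rd v_{k-1}w_{k-1}^\lambda$, substituting $v_{k-1}=v_k+2^kLv_k$ wherever it appears, and using the symmetry of $\a$ in the form $\fint_\Rd (Lv_k)w_k^\lambda = \fint_\Rd v_k(v_k-\lambda w_k^\lambda)$, one is left after a short computation with $D_{k-1}^\lambda - D_k^\lambda = 2^k\fint_\Rd(v_{k-1}v_k+v_k^2) + \lambda R_k^\lambda$, where $R_k^\lambda$ is an explicit affine function of $D_k^\lambda$ and $\fint_\Rd v_k^2$. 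Hence the relation will follow once $D_k^\lambda$ is shown to remain bounded as $\lambda\downarrow0$.

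That boundedness I would get from the comparison $0\le D_k^\lambda\le D_{k-1}^\lambda\le\dots\le D_{-1}^\lambda$: since $v_k = 2^{-k}(2^{-k}+L)^{-1}v_{k-1}$ and $A := 2^{-k}(2^{-k}+L)^{-1}$ is a self-adjoint contraction commuting with the positive operator $(\lambda+L)^{-1}$, the operator $A^2(\lambda+L)^{-1}$ lies between $0$ and $(\lambda+L)^{-1}$, so $D_k^\lambda = \fint_\Rd v_{k-1}\,A^2(\lambda+L)^{-1}v_{k-1}\le D_{k-1}^\lambda$. (Equivalently, if $\mu$ denotes the spectral measure of $v_{-1}$ relative to $L$, then $D_k^\lambda = \int_0^\infty (\lambda+s)^{-1}\prod_{j=0}^k\left(2^{-j}/(2^{-j}+s)\right)^2\,\mathrm{d}\mu(s)$, which is nonincreasing in $k$ and, by monotone convergence, converges as $\lambda\downarrow0$.) Since $D_{-1}^\lambda$ is bounded by the first paragraph, so is every $D_k^\lambda$; therefore $\lambda R_k^\lambda\to0$, the limit $D_k$ exists for each $k$ — in particular $D_n$ exists and is finite, as asserted — and the one-step relation holds. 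Summing it over $k=0,\dots,n$ telescopes to $D_{-1}-D_n = \sum_{k=0}^n 2^k\fint_\Rd(v_{k-1}v_k+v_k^2)$, which is exactly the reduced identity.

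The principal difficulty is not any individual estimate but the disciplined handling of the non-coercive operator $L$ on the space $\L^2$ of stationary fields over the unbounded domain $\Rd$: one has to interpret all inverse-type expressions through the regularisation $(\lambda+L)^{-1}$, keep careful track of which objects live in $\H^1$ and which in $\H^{-1}$ — in particular of the fact that the corrector $\phi$ is itself not stationary, so that only $\nabla\phi$ and the stationary approximations $\phi_\lambda$ may legitimately be used — and justify the limit $\lambda\to0$, most notably the identification of $\lim_{\lambda\to0}D_{-1}^\lambda$, which rests on the strong $\L^2$ convergence \eqref{e.strong.L2}. Once this framework is set up, the remaining content is the elementary algebra of commuting resolvents outlined above.
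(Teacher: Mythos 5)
Your proof is correct, and it rests on the same toolkit as the paper's (resolvent identities, self-adjointness of $R_\lambda$, identification of $(\lambda+L)^{-1}v_{-1}$ with $\phi_\lambda$, and the strong $\L^2$ convergence \eqref{e.strong.L2}), but the organization differs in two ways worth noting. The paper applies, in one stroke, an iterated resolvent formula with the specific sequence of parameters $(\mu_0,\ldots,\mu_{2n})=(1,1,2^{-1},2^{-1},\ldots,2^{-n},2^{-n})$, producing the whole sum at once together with the remainder $\fint_\Rd v_{n,\lambda}R_\lambda v_{n,\lambda}$; you instead prove a one-step recursion $D_{k-1}^\lambda-D_k^\lambda=2^k\fint_\Rd(v_{k-1}v_k+v_k^2)+\lambda R_k^\lambda$ and telescope it, which unpacks the same algebra into shorter, more transparent pieces. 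The other genuine difference is the argument for existence of $D_n$: the paper identifies $D_n^\lambda$ as the difference between a convergent left-hand side and convergent summands (using that each $v_{k,\lambda}$ is a scalar multiple of $v_k$ with scalar tending to $1$), whereas you derive boundedness of $D_k^\lambda$ directly from the operator inequality $0\le 2^{-2k}(2^{-k}+L)^{-2}(\lambda+L)^{-1}\le(\lambda+L)^{-1}$, or equivalently from the spectral representation $D_k^\lambda=\int_0^\infty(\lambda+s)^{-1}\prod_{j=0}^k\big(2^{-j}/(2^{-j}+s)\big)^2\,\d\mu(s)$ and monotone convergence; this is cleaner and additionally yields $0\le D_k\le D_{k-1}$ for free, a monotonicity that the paper does not record at this point (and which makes the nonnegativity in Proposition~\ref{p.remainder} transparent). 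Both routes need the boundedness of $D_k^\lambda$ somewhere — in your case to kill $\lambda R_k^\lambda$, in the paper's case it is hidden in the convergence of the summands — so the logical dependencies are comparable. Minor remark: you should be a little more explicit that the spectral-theoretic manipulations for $k=-1$ require interpreting the pairing of $v_{-1}\in\H^{-1}$ with elements of $\H^1$, but you flag exactly this issue yourself and handle the base case $D_{-1}^\lambda$ by the concrete identity $D_{-1}^\lambda=\lambda\fint_\Rd\phi_\lambda^2+\fint_\Rd\nabla\phi_\lambda\cdot\a\nabla\phi_\lambda\ge0$, which avoids any difficulty.
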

\begin{remark}  
In the summand indexed by $k = 0$ on the right side of \eqref{e.multiscale}, we have
\begin{equation*}  
\fint_\Rd v_{-1} v_0 = -\fint_\Rd   \a\xi \cdot \nabla v_0,
\end{equation*}
and the left side of the identity above is interpreted as the duality pairing between $\H^{-1}$ and $\H^1$, as explained below \eqref{e.mapping.to.extend}. All the other terms on the right side of \eqref{e.multiscale} involve functions in $\L^2$ and are thus understood as in \eqref{e.def.fintR}.
\end{remark}
\begin{remark}  
One can show in great generality (using only the ergodicity of the coefficient field instead of the short-range dependence assumption) that 
\begin{equation*}  
\lim_{n \to \infty} D_n = 0.
\end{equation*}
We do not provide the argument for this fact here. The interested reader can reconstruct it from the quantitative analysis of this term provided in the next section; see also \cite[Theorem~5.1]{efficient}. 
\end{remark}
\begin{proof}[Proof of Proposition~\ref{p.multiscale}]
By \eqref{e.corrector.eq}, we have
\begin{equation}  
\label{e.test.phi}
\fint_\Rd \nabla \phi \cdot \a (\xi+\nabla\phi) = 0.
\end{equation}
Using also \eqref{e.def.ahom}, we deduce that
\begin{equation}  
\label{e.split.terms}
\xi \cdot \ahom \xi = \fint_\Rd (\xi+\nabla \phi) \cdot \a (\xi+\nabla \phi) = \fint_{\Rd} \xi\cdot \a \xi - \fint_\Rd \nabla \phi \cdot \a \nabla \phi.
\end{equation}
For each $\lambda > 0$, we define the resolvent operator
\begin{equation*}  
R_\lambda : \Ll\{
\begin{array}{rcl}  
\H^{-1} & \to & \H^1 \\
f & \mapsto & (\lambda - \nabla \cdot \a \nabla)^{-1} f.
\end{array}
\Rr.
\end{equation*}
The function $u = R_\lambda f$ is interpreted as the unique element of $\H^1$ such that, for every $v \in \H^1$,
\begin{equation*}  
\fint_{\Rd} \Ll( \lambda u v + \nabla u \cdot \a \nabla v \Rr) = \fint_\Rd fv,
\end{equation*}
the right side of this identity being understood as explained below \eqref{e.mapping.to.extend}. 
For every $\lambda, \mu > 0$, we have the resolvent formula
\begin{equation}  
\label{e.resolvent.formula}
R_\lambda = R_\mu + (\mu - \lambda) R_\lambda R_\mu.
\end{equation}
In particular, we have $R_\lambda R_\mu = R_\mu R_\lambda$. Moreover, the operator $R_\lambda$ is self-adjoint, in the sense that for every $f,g \in \H^{-1}$,
\begin{equation}  
\label{e.selfadjoint}
\fint_\Rd f R_\lambda g = \fint_\Rd g R_\lambda f.
\end{equation}
By \eqref{e.test.phi} and \eqref{e.strong.L2}, we have
\begin{equation*}  
\fint_\Rd \nabla \phi \cdot \a \nabla \phi 
= - \fint_\Rd \a \xi \cdot \nabla \phi 
 = -  \lim_{\lambda \to 0} \fint_\Rd \a \xi \cdot \nabla \phi_\lambda 
=  \lim_{\lambda \to 0} \fint_\Rd v_{-1} R_\lambda v_{-1} .
\end{equation*}
The completion of the proof will follow from this identity and a repeated application of the resolvent formula. To start with, given any family of numbers $\lambda, \mu_0,\ldots, \mu_n \in (0,\infty)$, an inductive argument based on the identity \eqref{e.resolvent.formula} yields that
\begin{equation}  
\label{e.recursive.resolv}
R_\lambda =  \Ll(\sum_{k = 0}^n (\mu_0 - \lambda) \, \cdots \, (\mu_{k-1} - \lambda) R_{\mu_0} \, \cdots \, R_{\mu_k}\Rr)
+ (\mu_0 - \lambda) \, \cdots \, (\mu_{n} - \lambda) R_{\mu_0} \, \cdots \, R_{\mu_n} R_\lambda.
\end{equation}
For the summand with $k = 0$, the product $(\mu_0 - \lambda) \, \cdots \, (\mu_{k-1} - \lambda)$ appearing above is interpreted as being $1$. Note that, for every $k \in \N$, we have 
\begin{equation}
\label{e.vk.resolvent}
v_k = 2^{-k} R_{2^{-k}} v_{k-1}.
\end{equation}
We define recursively
\begin{equation*}  
v_{-1,\lambda} := v_{-1}, \qquad \forall k \in \{0,\ldots,n\}, \ v_{k,\lambda} := (2^{-k}-\lambda) R_{2^{-k}} v_{k,\lambda}.
\end{equation*}
For each $\lambda \in (0,2^{-n})$, we now apply the formula \eqref{e.recursive.resolv} with the choice
\begin{equation*}  
(\mu_0,\ldots,\mu_{2n}) = (1,1,2^{-1}, 2^{-1}, \ldots, 2^{-n}, 2^{-n}),
\end{equation*}
and also use the commutation between resolvents and the symmetry \eqref{e.selfadjoint} to obtain that
\begin{equation*}  
\fint_\Rd v_{-1} R_\lambda v_{-1} = \sum_{k = 0}^n (2^{-k}-\lambda)^{-1} \fint_\Rd \Ll( v_{k-1,\lambda} v_{k,\lambda} + v_{k,\lambda}^2 \Rr) + \fint_\Rd v_{n,\lambda} \, R_{\lambda} v_{n,\lambda}.
\end{equation*}
Note that $v_{k,\lambda}$ is a scalar multiple of $v_k$, and that this scalar tends to $1$ as $\lambda$ tends to $0$. Hence, the left side and each summand in the sum indexed by $k$ on the right side of the identity above converges as $\lambda$ tends to $0$. It follows that the limit
\begin{equation*}  
\lim_{\lambda \to 0} \fint_\Rd v_{n,\lambda} \, R_{\lambda} v_{n,\lambda} = \lim_{\lambda \to 0} \fint_\Rd v_{n,} \, R_{\lambda} v_{n} =: D_n
\end{equation*}
is well-defined and finite, and using also \eqref{e.split.terms}, that the formula \eqref{e.multiscale} holds.
\end{proof}

%
%
%
%
%
%

\section{Quantitative estimates}
\label{s.quantitative}

Recall that we have fixed a vector $\xi \in \Rd$ of unit norm throughout the paper. The proof of Theorem~\ref{t.main} relies on estimates on the solution $u$ of the initial-value problem
\begin{equation}
\label{e.pde.u}
\Ll\{
\begin{array}{ll}
\dr_t u - \nabla \cdot \a \nabla u = 0 & \quad \text{in } (0,\infty) \times \Rd, \\
u(0,\cdot) = \nabla \cdot \a \xi & \quad \text{on } \Rd.
\end{array}
\Rr.
\end{equation}
The study of this problem was initiated in \cite{vardecay} where suboptimal estimates were derived. The sharp exponent of decay in time was obtained in \cite{GNO}, with polynomial moments controlled. With a very different proof, the stochastic integrability of this result was improved to almost Gaussian tails in \cite{GO5}. A variation of this argument is exposed in \cite[Chapter~9]{AKMbook}. 

\begin{theorem}[\cite{GO5}]
\label{t.estim.u}
(1) For every $\sigma \in (0,2)$, there exists $C(\sigma,\Lambda,d) < \infty$ such that for every $t \ge 1$ and $x \in \Rd$,
\begin{equation*}
|u(t,x)| \le \O_\sigma \Ll( C t^{-\frac 1 2 - \frac d 4} \Rr) .
\end{equation*}

(2) For every $\de > 0$, there exist $\si(\delta,d) > 2$ and $C(\de,\Lambda,d) < \infty$ such that for every $t \ge 1$ and $x \in \Rd$,
\begin{equation}  
\label{e.estim.u}
\Ll|u(t,x)\Rr| \le \O_\si \Ll( C t^{-\frac 1 2 - \frac d 4 + \de} \Rr) .
\end{equation}
\end{theorem}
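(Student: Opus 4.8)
This estimate is the main result of \cite{GO5}, with the optimal decay rate and bounded polynomial moments going back to \cite{GNO} and a variant of the argument exposed in \cite[Chapter~9]{AKMbook}; the plan is to follow that route. The starting point is to read $u(t,\cdot)$ as the divergence of an approximate flux relaxing to equilibrium. Setting $\Phi(t,\cdot) := \int_0^t u(s,\cdot)\,\d s$, a direct computation from \eqref{e.pde.u} gives $\dr_t \Phi - \nabla\cdot\a\nabla\Phi = \nabla\cdot(\a\xi)$ with $\Phi(0,\cdot) = 0$, whence
\begin{equation*}
u(t,\cdot) = \nabla\cdot\big(\a(\xi + \nabla\Phi(t,\cdot))\big),
\end{equation*}
and $\nabla\Phi(t,\cdot)$ approaches the corrector gradient $\nabla\phi$ as $t\to\infty$. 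So the smallness of $|u(t,x)|$ records two facts at once: a single spatial derivative falls on a field that is morally a heat-kernel average on scale $\sqrt t$, which is the source of the factor $t^{-1/2}$; and the relevant average of the (large-scale mean-zero) flux fluctuates at the central-limit scale $(\sqrt t)^{-d/2} = t^{-d/4}$.

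The first, deterministic, reduction is to pass from this pointwise quantity to $\sqrt t$-scale spatial averages. Using Nash--Aronson Gaussian bounds for the fundamental solution of $\dr_t - \nabla\cdot\a\nabla$ together with local boundedness and Caccioppoli estimates, one dominates $|u(t,x)|$ by a Gaussian-weighted spatial average of $|u(t/2,\cdot)|$; since $u$ is a divergence, such averages integrate by parts and are then controlled by averages of $\a(\xi+\nabla\Phi(t/2,\cdot))$ and of $\nabla\Phi(t/2,\cdot)$ against gradients of heat-kernel-type masks on scale $\sqrt t$. The quenched large-scale $C^{0,1}$ regularity estimate (as in \cite{AKM,AKM2,GNO}) supplies the required local $L^2$ bound on $\nabla\Phi$, and a parabolic energy identity encodes its convergence to $\nabla\phi$; together these reduce the claim to showing that the $\sqrt t$-scale averages of the flux concentrate around $\ahom\xi$ at rate $t^{-\frac12-\frac d4}$ with the stated stochastic integrability.

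The quantitative and probabilistic core is this concentration statement, and here the unit range of dependence enters decisively. The dependence of $u(t,x)$ on the coefficient field is, up to exponentially small errors coming from the Gaussian decay of the parabolic Green function (a diffusive finite-speed-of-propagation), confined to a ball of radius of order $\sqrt t\,(1+\log(2+t))$; one then splits the contribution of $u(t,x)$ into near-independent centered increments indexed by dyadic annuli and sums them by the Azuma--Hoeffding mechanism already recorded in Lemma~\ref{l.barO.boxes}, iterated over the $\sim \log t$ relevant scales. Adding the scale-by-scale variances reproduces the $t^{-\frac d2}$ variance, i.e.\ the size $t^{-\frac d4}$, while the logarithmically many scales are responsible for the small loss of stochastic integrability: one gets the optimal rate with tails of type $\exp(-x^\sigma)$ for every $\sigma<2$ in part~(1), and, at the cost of a loss $\de$ in the rate, super-Gaussian tails with some $\sigma(\de)>2$ in part~(2).

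I expect the main obstacle to be exactly this last step: transporting the finite-range independence of $\a$ through the nonlinear parabolic flow so as to extract cancellations that are sharp \emph{both} in the deterministic rate and in the stochastic integrability. Controlling the genuinely non-Gaussian contribution of the finest scales against the target central-limit size, without sacrificing more than an arbitrarily small amount in the rate or in the exponent $\sigma$, is the delicate point that is settled in \cite{GO5}.
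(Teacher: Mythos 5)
You have correctly identified that part~(1) is a black-box citation of \cite{GO5} (see also \cite[Theorem~9.1]{AKMbook}), and your sketch of how that result is obtained in \cite{GO5} is a reasonable summary. However, you have not addressed the only thing the paper actually proves here, namely part~(2), and the route you gesture at for it is off the mark.

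The paper's proof of part~(2) is \emph{not} a refinement of the stochastic machinery; it is a short deterministic interpolation. The key ingredient you are missing is Lemma~\ref{l.deterministic.u}, the deterministic bound $\|u(t,\cdot)\|_{L^\infty(\Rd)} \le C t^{-1/2}$, which in particular says $|u(t,x)|$ is bounded by a constant uniformly over $t\ge 1$. Once you have this, part~(2) follows from part~(1) by a one-line observation: take $\tau<2$ from part~(1) and any target $\sigma>\tau$, write $|u|^\sigma = |u|^\tau\cdot|u|^{\sigma-\tau}$, and absorb the bounded factor $|u|^{\sigma-\tau}$ into the constant. Concretely, from
\begin{equation*}
\E\Ll[\exp\Ll(\Ll(C^{-1}t^{\frac12+\frac d4}|u(t,x)|\Rr)^\tau\Rr)\Rr]\le 2
\end{equation*}
together with $|u|\le C$, one gets $\E[\exp(C^{-\sigma}t^{\tau(\frac12+\frac d4)}|u|^\sigma)]\le 2$, i.e. $|u(t,x)|\le\O_\sigma(Ct^{-\frac\tau\sigma(\frac12+\frac d4)})$, which is \eqref{e.estim.u} with $\delta=(1-\frac\tau\sigma)(\frac12+\frac d4)$. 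Since $\tau<2$ and $\sigma>2$ may both be taken arbitrarily close to $2$, every $\delta>0$ is realized this way. Your proposal, by contrast, anticipates that the loss-of-$\delta$ statement must be wrung out of the multi-scale probabilistic argument (``transporting finite-range independence through the nonlinear parabolic flow so as to extract cancellations sharp in both the rate and the stochastic integrability''); that is precisely what the interpolation allows one to avoid.

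So the gap is concrete: you have not invoked the deterministic $L^\infty$ bound and therefore cannot trade decay rate for stochastic integrability, which is the entire content of the step being proved.
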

We will only use the first part of Theorem~\ref{t.estim.u} once, in the course of the proof of Proposition~\ref{p.remainder}, in the form of the $L^2$ estimate
\begin{equation}  
\label{e.L2.estimate}
\fint_\Rd u^2(t,\cdot) \le C t^{-1 - \frac d 2}.
\end{equation}
In order to conclude for exponentially decaying tails as in the statement of Theorem~\ref{t.main}, it is crucial to be able to choose an exponent $\sigma \ge 2$ in the estimate on the size of $u$ (and for convenience, we will in fact choose $\sigma > 2$); this is the main motivation for stating the second part of Theorem~\ref{t.estim.u}. The first part of Theorem~\ref{t.estim.u} matches the results found in \cite{GO5} and \cite[Theorem~9.1]{AKMbook}. In order to obtain the second part of the statement as a consequence, we can use the following basic deterministic estimate, a proof of which can be found in \cite[Lemma~9.2]{AKMbook}.
\begin{lemma}[Deterministic bounds on $u$]
\label{l.deterministic.u}
There exists $C(\Lambda,d) < \infty$ such that for every $t > 0$,
\begin{equation*}  
\|u(t,\cdot)\|_{L^\infty(\Rd)} + t^\frac 1 2 \|\nabla u(t,\cdot)\|_{L^\infty(\Rd)} \le C t^{-\frac 1 2}.
\end{equation*}
\end{lemma}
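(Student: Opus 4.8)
The plan is to pass through the parabolic Green function and reduce the whole statement to an $L^1$-in-space bound on its gradient. I write $P = P(t;x,y)$ for the fundamental solution of $\dr_t - \nabla \cdot \a \nabla$ on $\Rd$; since $\a$ is symmetric, $P$ is symmetric in its two space variables, $P(t;x,y) = P(t;y,x)$, and the classical Nash--Aronson bounds provide a Gaussian upper estimate $0 \le P(t;x,y) \le C t^{-d/2} \exp ( -|x-y|^2/(Ct) )$ as well as the analogous bound on $\dr_t P$. Because the initial datum equals $\nabla \cdot (\a \xi)$ with $\|\a \xi\|_{L^\infty(\Rd)} \le \Lambda$, the solution is $u(t,x) = \langle \nabla \cdot (\a \xi), P(t;x,\cdot) \rangle$, and an integration by parts in $y$ (justified since $P(t;x,\cdot) \in H^1(\Rd)$ with Gaussian decay for $t > 0$) gives
\[
u(t,x) = - \int_{\Rd} \a(y)\xi \cdot \nabla_y P(t;x,y) \, \d y, \qquad \text{so} \qquad |u(t,x)| \le \Lambda \int_{\Rd} |\nabla_y P(t;x,y)| \, \d y .
\]
Thus the first bound follows once I establish the core estimate $\sup_{x \in \Rd} \int_{\Rd} |\nabla_y P(t;x,y)| \, \d y \le C t^{-1/2}$.

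To prove the core estimate I would split $\Rd$ into the dyadic shells around $x$ of width $\sim 2^j \sqrt t$, cover each shell by $\ls 2^{jd}$ balls $B_0 = B(y_0, \tfrac 1 8 \sqrt t)$, and on each such ball use Cauchy--Schwarz,
\[
\int_{B_0} |\nabla_y P(t;x,y)| \, \d y \le C t^{d/4} \Ll( \int_{B_0} |\nabla_y P(t;x,y)|^2 \, \d y \Rr)^{1/2} .
\]
The remaining $L^2$ quantity is controlled by a Caccioppoli argument: by the symmetry of $P$, the map $(s,y) \mapsto P(s;x,y)$ is caloric near $(t,y_0)$ and bounded there, by the Gaussian estimate, by $M := C t^{-d/2} \exp(-c\, 4^j)$; since $\a$ does not depend on time, $\dr_s P$ is caloric too, hence locally bounded by $\ls M t^{-1}$ (in agreement with the Aronson bound on $\dr_t P$), so that $P(t;x,\cdot)$ solves an elliptic equation $\nabla \cdot \a \nabla P(t;x,\cdot) = \dr_s P(t;x,\cdot)$ with right-hand side of size $\ls M t^{-1}$; the elliptic Caccioppoli inequality then gives $\int_{B_0} |\nabla_y P(t;x,\cdot)|^2 \le C M^2 t^{(d-2)/2} \le C t^{-1-d/2} \exp(-c\, 4^j)$. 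Inserting this, summing over the $\ls 2^{jd}$ balls of each shell and then over $j \ge 0$, and using $\sum_j 2^{jd} \exp(-c\, 4^j) < \infty$, yields the core estimate.

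Granting the bound $\|u(t,\cdot)\|_{L^\infty(\Rd)} \le C t^{-1/2}$, I would obtain the gradient bound from the semigroup property. Since $u(t/2,\cdot)$ is bounded, $u(t,x) = \int_{\Rd} P(t/2;x,y)\, u(t/2,y) \, \d y$, hence $\nabla_x u(t,x) = \int_{\Rd} \nabla_x P(t/2;x,y)\, u(t/2,y) \, \d y$ and
\[
|\nabla u(t,x)| \le \|u(t/2,\cdot)\|_{L^\infty(\Rd)} \int_{\Rd} |\nabla_x P(t/2;x,y)| \, \d y \le C t^{-1/2} \cdot C t^{-1/2} = C t^{-1},
\]
where I used the core estimate once more with the two space variables exchanged (legitimate by the symmetry of $P$). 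Together these two bounds give $\|u(t,\cdot)\|_{L^\infty(\Rd)} + t^{1/2} \|\nabla u(t,\cdot)\|_{L^\infty(\Rd)} \le C t^{-1/2}$.

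The hard part is the Caccioppoli step at the \emph{fixed} final time: the standard parabolic energy inequality only bounds the space-time average of $|\nabla_y P|^2$ on a cylinder, and upgrading this to a bound on the single time slice $\{t\}$ is precisely what forces one to use that $\a$ is time-independent (so that $\dr_t P$ is itself caloric) together with the elliptic equation satisfied by $P(t;x,\cdot)$. An alternative that avoids this altogether is to invoke the known weighted gradient bound for heat kernels of uniformly elliptic operators, $\int_{\Rd} \exp ( |x-y|^2/(Ct) ) |\nabla_y P(t;x,y)|^2 \, \d y \le C t^{-1-d/2}$, from which the core estimate follows immediately by Cauchy--Schwarz.
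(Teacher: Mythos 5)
The paper gives no proof of its own, only the citation to \cite[Lemma~9.2]{AKMbook}, so the comparison is with that reference. Your argument is correct. Both your main route and the ``alternative'' you mention at the end reduce the lemma to the core $L^1$-in-space bound on the parabolic Green-function gradient, $\sup_x\int_{\Rd}|\nabla_y P(t;x,y)|\,\d y\le Ct^{-1/2}$; the alternative (invoke the weighted Gaussian bound $\int_{\Rd} e^{|x-y|^2/(Ct)}|\nabla_y P(t;x,y)|^2\,\d y\le Ct^{-1-d/2}$ and apply Cauchy--Schwarz against a Gaussian) is the standard way to obtain it and is essentially what the cited reference does. Your main route, covering dyadic annuli around $x$ by balls of radius $\sim\sqrt t$ and applying a single-time-slice Caccioppoli inequality on each, is a more elementary, self-contained way to get the same estimate, and you correctly identify that time-independence of $\a$ is what upgrades the usual space-time energy bound to a bound on the fixed slice $\{t\}$. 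The semigroup step for the gradient bound and the use of the symmetry of $P$ are both legitimate.

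One step is left compressed: the claim that $\partial_t P$ is locally of order $Mt^{-1}$. ``Caloric, hence locally bounded'' is not quite a proof, since De Giorgi--Nash--Moser bounds the local sup of a caloric function by its local $L^2$ norm, and you still need to control $\int_Q|\partial_t P|^2$. This closes by testing the equation for $P$ against $\zeta^2\partial_t P$: time-independence of $\a$ lets one write $\int\zeta^2\,\a\nabla P\cdot\nabla\partial_t P=\tfrac12\int\zeta^2\,\partial_t(\a\nabla P\cdot\nabla P)$, then integrate by parts in time and combine with the parabolic Caccioppoli inequality to get $\int_Q|\partial_t P|^2\ls R^{d-2}M^2$ on a cylinder of parabolic radius $R\sim\sqrt t$, whence $\|\partial_t P\|_{L^\infty}\ls R^{-2}M\sim Mt^{-1}$ by De Giorgi--Nash--Moser applied to the caloric function $\partial_t P$. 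With that sentence filled in, the argument is complete.
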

\begin{proof}[Proof of part (2) of Theorem~\ref{t.estim.u}]
Let $\sigma > 2$ and $\tau \in (0,2)$ be exponents that will be fixed in terms of $\de > 0$ and the dimension~$d$ in the course of the proof. By the first part of the theorem, there exists a constant $C(\tau,\Lambda,d) < \infty$ such that for every $t \ge 1$ and $x \in \Rd$, 
\begin{equation*}
|u(t,x)| \le \O_\tau \Ll( C t^{-\frac 1 2 - \frac d 4} \Rr) .
\end{equation*}
Explicitly, this means that
\begin{equation*}  
\E \Ll[ \exp \Ll( \Ll(C^{-1} t^{\frac 1 2 + \frac d 4}|u(t,x)|\Rr)^\tau \Rr)  \Rr] \le 2.
\end{equation*}
It follows from Lemma~\ref{l.deterministic.u} that the random variable $|u(t,x)|$ is bounded, uniformly over $t \ge 1$ and $x \in \Rd$. Hence, for a constant $C(\tau,\sigma,\Lambda,d) < \infty$,
\begin{equation*}  
\E \Ll[ \exp \Ll( C^{-\sigma} t^{\tau\Ll(\frac 1 2 + \frac d 4\Rr)}|u(t,x)|^\sigma \Rr)  \Rr] \le 2,
\end{equation*}
that is,
\begin{equation*}  
\Ll|u(t,x)\Rr| \le \O_\si \Ll(C t^{-\frac \tau \si \Ll( \frac 1 2 + \frac d 4 \Rr)}  \Rr) .
\end{equation*}
This is \eqref{e.estim.u} with $\delta$ given by
\begin{equation*}  
\delta = \Ll( 1 - \frac \tau \sigma \Rr) \Ll( \frac 1 2 + \frac d 4 \Rr) .
\end{equation*}
Since $\sigma > 2$ and $\tau < 2$ can be chosen arbitrarily close to $2$, any exponent $\delta > 0$ can be represented in this way.
\end{proof}
We also record the following useful lemma allowing to localize the dependency of $u$ on the coefficient field; see \cite[Lemma~9.4]{AKMbook} for a proof.
\begin{lemma}[Localization of~$u$]
\label{l.SG.loc.u}
There exist a constant $C(\Lambda,d) < \infty$ and, for each $r \in [2,\infty)$, $t \in (0,r^2]$ and $x \in \Rd$, an $\mcl F(B(x,r))$-measurable random variable $u'(r,t,x)$ such that
\begin{equation}  
\label{e.loc.u}
\Ll|u(t,x) - u'(r,t,x) \Rr| \le C t^{-\frac 1 2} \exp \Ll( - \frac{r^2}{Ct}\Rr),
\end{equation}
and 
\begin{equation}  
\label{e.loc.nabla.u}
\Ll|\nabla u(t,x) - \nabla u'(r,t,x) \Rr| \le C t^{-1}\exp \Ll( - \frac{r^2}{Ct}\Rr).
\end{equation}
\end{lemma}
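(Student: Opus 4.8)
The plan is to produce the localized proxy by re-solving the parabolic problem on the ball $B(x,r)$ with Dirichlet data, and then to control the discrepancy using the Gaussian decay of parabolic equations in divergence form. Concretely, for $r\ge 2$, $t\in(0,r^2]$ and $x\in\Rd$, I would let $\widetilde u$ be the solution of $\dr_s\widetilde u-\nabla\cdot\a\nabla\widetilde u=0$ in $(0,\infty)\times B(x,r)$, with zero lateral data on $(0,\infty)\times\dr B(x,r)$ and with initial datum the element of $H^{-1}(B(x,r))$ obtained by restricting $\nabla\cdot(\a\xi)$ to $H^1_0(B(x,r))$, i.e.\ the functional $\varphi\mapsto-\int_{B(x,r)}\a\xi\cdot\nabla\varphi$. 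Since this construction uses only $\a$ restricted to $B(x,r)$, both $u'(r,t,x):=\widetilde u(t,x)$ and $\nabla u'(r,t,x):=\nabla\widetilde u(t,x)$ — which are well defined pointwise for $t>0$ by interior parabolic regularity — are $\F(B(x,r))$-measurable, as required.

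I would then study $v:=u-\widetilde u$. Subtracting the two equations, $v$ solves the \emph{homogeneous} equation $\dr_s v-\nabla\cdot\a\nabla v=0$ in $(0,\infty)\times B(x,r)$; the initial data of $u$ and $\widetilde u$ agree as functionals on $H^1_0(B(x,r))$, so $v$ has \emph{vanishing} initial datum there, and $v=u$ on the lateral boundary $(0,\infty)\times\dr B(x,r)$. Lemma~\ref{l.deterministic.u} bounds the lateral datum: $\|v(s,\cdot)\|_{L^\infty(\dr B(x,r))}=\|u(s,\cdot)\|_{L^\infty(\dr B(x,r))}\le Cs^{-1/2}$. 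Representing $v$ at the centre of the ball through the parabolic Poisson kernel $\mcl P$ of $B(x,r)$ for $\dr_s-\nabla\cdot\a\nabla$, so that $v(t,x)=\int_0^t\int_{\dr B(x,r)}v(s,z)\,\mcl P(t,x;s,z)\,\d\sigma(z)\,\d s$, and invoking the Aronson-type off-diagonal bound $\int_{\dr B(x,r)}|\mcl P(t,x;s,z)|\,\d\sigma(z)\le C\,r\,(t-s)^{-3/2}\exp(-r^2/(C(t-s)))$ (together with its $\nabla_x$ counterpart, which carries an extra factor $(t-s)^{-1/2}$), I arrive at
\[
|v(t,x)|\le C\int_0^t\frac{r}{(t-s)^{3/2}}\exp\!\left(-\frac{r^2}{C(t-s)}\right)s^{-1/2}\,\d s .
\]
Splitting this integral at $s=t/2$ (on $(0,t/2)$ the kernel factor is comparable to $t^{-3/2}\exp(-r^2/(Ct))$ and $\int_0^{t/2}s^{-1/2}\,\d s\le Ct^{1/2}$; on $(t/2,t)$ one substitutes $\rho=t-s$ and uses that $\rho^{-3/2}\exp(-r^2/(C\rho))$ is integrable at $\rho=0$), then using $t\le r^2$ and the elementary inequality $\lambda^\beta e^{-\lambda}\le C_\beta e^{-\lambda/2}$, I would get $|v(t,x)|\le Ct^{-1/2}\exp(-r^2/(Ct))$; the same computation with the $\nabla_x$-kernel bound gives $|\nabla v(t,x)|\le Ct^{-1}\exp(-r^2/(Ct))$. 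By the definition of $u'$ and $\nabla u'$, these are exactly \eqref{e.loc.u} and \eqref{e.loc.nabla.u}.

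The genuinely delicate point, and the one I would be most careful about, is the behaviour as $s\to 0$: the lateral datum grows like $s^{-1/2}$, which is not square-integrable in time, so any quadratic (weighted-energy) estimate for $v$ would be lossy; it is essential to work with the \emph{linear} Duhamel/Poisson representation, which retains only the integrable singularity $s^{-1/2}$. The remaining ingredients are standard parabolic facts that must be assembled carefully: the off-diagonal bounds on $\mcl P$ and $\nabla_x\mcl P$ are obtained from Aronson's Gaussian estimates for the Dirichlet heat kernel on the ball, a rescaled Caccioppoli inequality near the smooth boundary $\dr B(x,r)$, and an interior gradient estimate; the pointwise (rather than merely $L^2$-averaged) character of the bound on $\nabla v$ is the subtlest of these, since gradients of solutions to divergence-form parabolic equations with merely measurable coefficients need not be locally bounded — here it is available precisely because of the special structure of the data $\nabla\cdot(\a\xi)$, which is also what makes Lemma~\ref{l.deterministic.u} true. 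The structure of the argument parallels that of \cite[Lemma~9.4]{AKMbook}.
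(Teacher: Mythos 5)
Your construction of $u'$ as the solution of the Dirichlet initial--boundary value problem on $B(x,r)$ is a natural way to obtain an $\F(B(x,r))$-measurable proxy, and is in the spirit of the argument the paper cites (the paper gives no proof itself, deferring to \cite[Lemma~9.4]{AKMbook}). The $L^\infty$ part of your argument is sound: writing $v=u-\widetilde u$ as a Poisson integral of the lateral datum, splitting at $s=t/2$, and absorbing powers of $r/\sqrt t$ into the exponential via $\lambda^\beta e^{-\lambda}\lesssim e^{-\lambda/2}$ correctly handles the $s^{-1/2}$ singularity at $s=0$ and yields \eqref{e.loc.u}, given the Aronson-type bound on $\int_{\dr B(x,r)}|\mcl P(t,x;s,z)|\,\d\sigma(z)$, which is indeed a standard parabolic harmonic-measure estimate.

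The gradient estimate is where the argument has a genuine gap. You rely on a pointwise bound for $\int_{\dr B(x,r)}|\nabla_x\mcl P(t,x;s,z)|\,\d\sigma(z)$. For $\a$ merely bounded measurable, the kernel $x\mapsto\mcl P(t,x;s,z)$ is only H\"older continuous by De Giorgi--Nash--Moser, and $\nabla_x\mcl P$ need not be locally bounded, so no such estimate is available. You acknowledge the difficulty and appeal to ``the special structure of the data $\nabla\cdot(\a\xi)$'', but that structure is absent from the representation you are differentiating: the Poisson formula for $v$ sees only the lateral datum $u\vert_{\dr B(x,r)}$, which is just a bounded function with no useful divergence structure, while the structured initial datum $\nabla\cdot(\a\xi)$ has \emph{cancelled} between $u$ and $\widetilde u$. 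The fact that $\nabla u(t,x)$ and $\nabla\widetilde u(t,x)$ are each pointwise well-defined (because each solution separately retains the structured initial data) only gives $|\nabla v(t,x)|\lesssim t^{-1}$ by the triangle inequality, with no exponential gain. A parabolic Caccioppoli estimate on a cylinder around $(t,x)$ does propagate the exponential smallness of $v$ to $\nabla v$ in an $L^2$-averaged sense, but that is not the pointwise bound \eqref{e.loc.nabla.u} asserted; upgrading it is exactly the non-trivial point. To close the gap one must open up the mechanism behind Lemma~\ref{l.deterministic.u} — a Duhamel-type representation of $\nabla u(t,x)$ directly in terms of the structured initial datum and the semigroup — and show that this representation, rather than the Poisson formula, is stable under the localization; differentiating the Poisson kernel is a dead end with rough coefficients.
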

The function $v_k$ can be represented as a time integral of the function $u(t,\cdot)$, and the main contribution to this integral is for $t \simeq 2^k$. The previous results concerning the function $u$ can thus be translated into information on $v_k$.
\begin{proposition}[quantitative bounds on $v_k$]
\label{p.quantitative.vk}
(1) There exists $C(\Lambda,d) < \infty$ such that for every $k \in \N$,
\begin{equation}  
\label{e.deterministic.v}
\|v_k\|_{L^\infty(\Rd)} 
\le C 2^{-\frac k 2}.
\end{equation}

(2) There exist $C(\Lambda,d) < \infty$ and, for each $r \in [2,\infty)$, $k \in \N$  and $x \in \Rd$, an $\F(B(x,r))$-measurable random variable $v_k(r,x)$ such that 
\begin{equation}
\label{e.loc.v}
\Ll| v_k(x) - v_k(r,x)  \Rr| \le C 2^{-\frac k 2} \exp \Ll( -C^{-1} 2^{-k} r^2 \Rr) ,
\end{equation}
and
\begin{equation}
\label{e.loc.nabla.v}
\Ll|\nabla v_k(x) - \nabla v_k'(r,x) \Rr| \le C 2^{-k}  \exp \Ll( - C^{-1} 2^{-k} r^2\Rr).
\end{equation}

(3) For every $\de > 0$, there exist $\si(\delta,d) > 2$ and $C(\de,\Lambda,d) < \infty$ such that for every $k \in \N$ and $x \in \Rd$,
\begin{equation}
\label{e.estim.v}
|v_k(x)| \le \O_\si \Ll( C 2^{-k \Ll( \frac 1 2 + \frac d 4 - \delta \Rr) } \Rr) .
\end{equation}
\end{proposition}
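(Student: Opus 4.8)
The plan is to pass from the parabolic function $u$ solving \eqref{e.pde.u} to the resolvent-type functions $v_k$ via the integral representation
\begin{equation*}
v_k(x) = \int_0^\infty 2^{-k} e^{-2^{-k} t}\, u(t,x)\, \d t \cdot (\text{appropriate iterated kernel}),
\end{equation*}
or more precisely, by iterating \eqref{e.vk.resolvent}: since $v_k = 2^{-k} R_{2^{-k}} v_{k-1}$ and $R_\lambda = \int_0^\infty e^{-\lambda t} P_t \, \d t$ where $P_t$ is the semigroup generated by $\nabla\cdot\a\nabla$, one has $v_{-1} = u(0,\cdot)$ and hence $R_{2^{-k}} v_{-1} = \int_0^\infty e^{-2^{-k}t} u(t,\cdot)\,\d t$. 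Applying the resolvents at the dyadic levels $2^{-1},\ldots,2^{-k}$ in turn expresses $v_k$ as an iterated time-average of $u(t,\cdot)$ against a probability density concentrated at $t \simeq 2^k$, with total mass $1$ (because of the $2^{-k}$ prefactors in \eqref{e.def.vk}). First I would make this representation precise and record the elementary fact that the density has its bulk on $t \in [c 2^k, C 2^k]$ with exponentially small tails outside, which is what makes ``$v_k$ sees length scale $2^{k/2}$'' rigorous.

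\emph{Part (1)} then follows by inserting the deterministic bound of Lemma~\ref{l.deterministic.u}, namely $\|u(t,\cdot)\|_{L^\infty} \le C t^{-1/2}$: integrating $t^{-1/2}$ against a probability density concentrated near $t \simeq 2^k$ gives $\|v_k\|_{L^\infty(\Rd)} \le C 2^{-k/2}$. One only has to check that the contribution of small $t$ (where $t^{-1/2}$ is large) is killed by the exponential weight $e^{-2^{-k}t}$ together with the iterated structure; since there are $k+1$ resolvents each contributing a factor that vanishes near $t = 0$, this is harmless and the constant does not degrade with $k$.

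\emph{Part (2)} uses the localization Lemma~\ref{l.SG.loc.u}: define $v_k(r,x)$ by the same iterated time-integral but with each $u(t,x)$ replaced by its localized version $u'(r,t,x)$ (which is $\F(B(x,r))$-measurable, hence so is the integral). The error is controlled by integrating \eqref{e.loc.u}, i.e. $C t^{-1/2}\exp(-r^2/(Ct))$, against the density concentrated at $t\simeq 2^k$; since $t \lesssim 2^k$ on the bulk, $\exp(-r^2/(Ct)) \le \exp(-C^{-1}2^{-k}r^2)$ there, and on the tail $t \gg 2^k$ the exponential semigroup weight $e^{-2^{-k}t}$ again suffices. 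This yields \eqref{e.loc.v}; the gradient estimate \eqref{e.loc.nabla.v} is identical, using \eqref{e.loc.nabla.u} and $\nabla v_k = 2^{-k}\int_0^\infty e^{-2^{-k}t}\nabla(\text{iterated})\, \d t$, which costs one extra factor $t^{-1/2}$ and hence an extra $2^{-k/2}$. One must take a little care that $t \le r^2$ is required for Lemma~\ref{l.SG.loc.u} to apply, but since the density is exponentially small for $t \gg 2^k$ and we only use the estimate for $r \ge c 2^{k/2}$, the regime $t > r^2$ contributes negligibly.

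\emph{Part (3)} combines the same representation with part (2) of Theorem~\ref{t.estim.u}: fix $\delta > 0$, choose $\delta' = \delta/2$ say, obtain $\sigma = \sigma(\delta',d) > 2$ and $|u(t,x)| \le \O_\sigma(C t^{-1/2-d/4+\delta'})$. Now apply Lemma~\ref{l.sum-O} (stability of $\O_\sigma$ bounds under averaging, valid since $\sigma \ge 1$) to the integral of $u(t,x)$ against the iterated probability density $\d\mu_k(t)$ concentrated at $t\simeq 2^k$: the result is $v_k(x) \le \O_\sigma\bigl(C\int t^{-1/2-d/4+\delta'}\,\d\mu_k(t)\bigr) \le \O_\sigma(C 2^{-k(1/2+d/4-\delta'')})$ for a $\delta''$ absorbing $\delta'$ and the sub-exponential correction from the tails. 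Adjusting constants gives \eqref{e.estim.v} with the stated $\delta$. I expect the main obstacle to be purely bookkeeping: writing down the iterated-resolvent density cleanly, verifying its total mass is $1$ and its concentration/tail behaviour at each of the $k+1$ levels simultaneously, and checking that constants do not blow up with $k$ when one integrates the singular factor $t^{-1/2}$ (and $t^{-1}$ for the gradient) near the origin against that density. None of the individual steps is deep, but the multi-level integral must be organized so that the exponential localization in $r$ and the sub-exponential $\O_\sigma$ bound both survive the iteration.
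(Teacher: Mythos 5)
Your plan is correct and is essentially the paper's proof: the paper also writes $v_k = E[u(S_k,\cdot)]$ where $S_k = \sum_{j=0}^k T_j$ is a sum of independent exponentials with parameters $2^{-j}$ (this is exactly your ``iterated-resolvent density''), controls the short-time regime via $P[S_k \le s] \le 2^{-k(k+1)/2}s^{k+1}$ and the large-time tail via $P[S_k \ge s] \le C\exp(-2^{-(k+1)}s)$, and then invokes Lemma~\ref{l.deterministic.u} for part (1), Lemma~\ref{l.SG.loc.u} with the cut-off $\{S_k \le r^2\}$ for part (2), and Theorem~\ref{t.estim.u}(2) plus Lemma~\ref{l.sum-O} together with $E[(1+S_k)^{-\beta}] \le C2^{-k\beta}$ for part (3). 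The remaining work you defer to bookkeeping is indeed exactly what the paper carries out.
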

\begin{proof}
We decompose the proof into three steps. 

\smallskip

\emph{Step 1.}
Transfering information on $u(t,\cdot)$ onto information on the $v_k$'s relies on the observation that, for every $\lambda > 0$ and $f \in \L^2$,
\begin{equation}  
\label{e.resolvent.semigroup}
R_\lambda f = (\lambda - \nabla\cdot \a \nabla)^{-1} f = \int_0^{+\infty} e^{-\lambda t} P(t) f \, dt,
\end{equation}
where $P(t) = \exp \Ll( t \, \nabla \cdot \a \nabla \Rr)$ is the semigroup associated with the evolution operator $\partial_t - \nabla \cdot \a \nabla$. This identity can be extended to the case $f = \nabla \cdot \a \xi$, and we thus have in particular that
\begin{equation}  
\label{e.formula.v0}
v_0 = \int_0^{\infty} e^{-t} \, u(t,\cdot) \, \d t.
\end{equation}
Since integrals of the form of \eqref{e.resolvent.semigroup} or \eqref{e.formula.v0} will be iterated multiple times, it is convenient to rewrite them using probabilistic notation. That is, denoting by $T^{(\lambda)}$ an exponential random variable of parameter $\lambda$ which is independent of any other quantity in the problem, and by $E$ the expectation over this random variable only, we can rewrite \eqref{e.resolvent.semigroup} in the form
\begin{equation}  
\label{e.resolvent.Tlambda}
\lambda R_\lambda f = E \Ll[P(T^{(\lambda)}) f\Rr].
\end{equation}
Denote by $(T_k)_{k \in \N}$ a family of independent exponential random variables, of respective parameters $(2^{-k})_{k \in \N}$, and for every $k \in \N$, set
\begin{equation}  
\label{e.def.Sk}
S_k := \sum_{j = 0}^k T_j.
\end{equation}
We keep denoting by $E$ the expectation over these random variables. Recalling \eqref{e.vk.resolvent} and using the semigroup property of $(P(t))_{t \ge 0}$, we deduce that for every $k \in \N$, 
\begin{equation}  
\label{e.formula.vk}
v_k = E \Ll[ P(S_k) v_{-1} \Rr] = E \Ll[u(S_k,\cdot) \Rr].
\end{equation}
In view of this relation, we can now transfer information about $u$ onto $v_k$ provided that we have some information on the typical behavior of $S_k$. As a useful guide for the intuition, we remark that
\begin{equation*}  
E[S_k] = \sum_{j = 0}^k 2^{j} = 2^{k+1} - 1,
\end{equation*}
so heuristically, we hope that any bound on $u(t,\cdot)$ transfers into a bound on~$v_k$ after the substitution of $t$ by $2^k$.

\smallskip

\emph{Step 2.} We prove \eqref{e.estim.v}. In view of Theorem~\ref{t.estim.u} and \eqref{e.formula.vk}, we need to know that~$S_k$ is rarely much smaller than $2^{k}$. The following result is shown in \cite[(5.12)]{efficient}: for every $\beta > 0$, there exists a constant $C(\beta) < \infty$ such that for every $k \in \N$,
\begin{equation}
\label{e.left.tail.Sk}
E \Ll[ (1+S_k)^{-\beta} \Rr] \le C 2^{-k\beta}.
\end{equation}
By Theorem~\ref{t.estim.u} and Lemma~\ref{l.sum-O}, we have, for every $x \in \Rd$,
\begin{equation*}  
E \Ll[\Ll|u(S_k,x)\Rr| \, \1_{\{S_k \ge 1\}} \Rr] \le \O_s \Ll( C E \Ll[(1+S_k)^{-\frac 1 2 - \frac d 4 + \de}\Rr]  \Rr) \le \O_s \Ll( C 2^{-k \Ll( \frac 1 2 + \frac d 4 - \de \Rr) } \Rr) .
\end{equation*}
To control the behavior of this term on the event $S_k \in [0,1]$, we use that the $T_j$'s are nonnegative and independent to write, for every $s \le 1$,
\begin{align} 
\label{e.estim.Skle}
P[S_k \le s] 
& \le \prod_{j = 0}^k P[T_j \le s]
 \le \prod_{j = 0}^k \Ll( 1-\exp \Ll( -2^{-j }s \Rr)  \Rr) 
 \le \prod_{j = 0}^k 2^{-j}s = 2^{-\frac{k(k+1)}{2}} s^{k+1}.
\end{align}
This and Lemma~\ref{l.deterministic.u} imply that
\begin{multline}  
\label{e.estim.shorttime}
E \Ll[ \Ll|u(S_k,x)\Rr| \, \1_{\{S_k \le 1\}} \Rr] 
 \le \sum_{j = 0}^{\infty} E \Ll[ \Ll|u(S_k,x)\Rr| \, \1_{\{2^{-j-1} < S_k \le 2^{-j}\}} \Rr] 
\\ \le C  2^{-\frac{k(k+1)}{2}} \sum_{j = 0}^\infty 2^{\frac{j}{2}} 2^{-j(k+1)}
 \le C 2^{-\frac{k(k+1)}{2}}.
\end{multline}
This is largely sufficient to complete the proof of the estimate in \eqref{e.estim.v}. The proof of \eqref{e.deterministic.v} is similar, only simpler, using Lemma~\ref{l.deterministic.u} in place of Theorem~\ref{t.estim.u}.

\smallskip

\emph{Step 3.} We prove \eqref{e.loc.v}. Recalling the notation $u'(r,t,x)$ introduced in Lemma~\ref{l.SG.loc.u}, we define, for each $r \in [2,\infty)$ and $x \in \Rd$, the $\mcl F(B(x,r))$-measurable random variable
\begin{equation*}  
v_k(r,x) := E \Ll[ u'(r,S_k,x) \1_{\{S_k \le r^2\}}\Rr] .
\end{equation*}
We need an upper bound for the probability of the event that $S_k$ is large. We obtain this by writing, for every $s \ge 0$, 
\begin{multline}  
\label{e.exp.decay}
P \Ll[ S_k \ge s \Rr] \le \E \Ll[ \exp \Ll( 2^{-(k+1)} \Ll( S_k - s \Rr)  \Rr)  \Rr] 
 =\prod_{j = 0}^{k} \frac{2^{-j}}{2^{-j} - 2^{-(k+1)}} \, \exp \Ll( -2^{-(k+1)} s \Rr) 
\\
 =\prod_{j = 1}^{k+1} \frac{1}{1 - 2^{-j}} \, \exp \Ll( -2^{-(k+1)} s \Rr) 
 \le C \exp \Ll( -2^{-(k+1)} s \Rr) .
\end{multline}
We now decompose the error into
\begin{align*}  
 \Ll|v_k(x) - v_k(r,x) \Rr| 
&  \le \E \Ll[ |u'(r,S_k,x) - u(S_k,x)| \, \1_{\{S_k \le r^2\}} \Rr] + \E \Ll[ |u(S_k,x)| \, \1_{\{S_k > r^2\}} \Rr] 
\\
&  \le  \sum_{j = 0}^{\lfloor 2^{-k} r^2 \rfloor } \E \Ll[ |u'(r,S_k,x) - u(S_k,x)| \, \1_{\{j2^{k} \le S_k \le (j+1)2^k\}} \Rr] + \E \Ll[ |u(S_k,x)| \, \1_{\{S_k > r^2\}} \Rr],
\end{align*}
and analyze each of these terms in turn. 
By Lemma~\ref{l.SG.loc.u} and \eqref{e.exp.decay}, we have
\begin{multline*}  
 \sum_{j = 0}^{\lfloor 2^{-k} r^2 \rfloor } \E \Ll[ |u'(r,S_k,x) - u(S_k,x)| \, \1_{\{j2^{k} \le S_k \le (j+1)2^k\}} \Rr] 
\le C 2^{-\frac k 2} \sum_{j = 0}^{\lfloor 2^{-k} r^2 \rfloor }   \exp \Ll( -\frac{r^2}{C j 2^{k}} -\frac{j}{2}\Rr)
\\
 \le C 2^{-\frac k 2}\exp \Ll( -\frac{r^2}{C 2^k} \Rr) .
\end{multline*}
Moreover, by Lemma~\ref{l.deterministic.u} and \eqref{e.exp.decay}, we have
\begin{equation*}  
\E \Ll[ |u(S_k,x)| \, \1_{\{S_k > r^2\}} \Rr] \le C r^{-1} \exp \Ll( -2^{-(k+1)} r^2 \Rr) .
\end{equation*}
Combining these estimates yields \eqref{e.loc.v}. The proof of \eqref{e.loc.nabla.v} is similar, except that we appeal to \eqref{e.loc.nabla.u} instead of \eqref{e.loc.u}.
\end{proof}
We denote
\begin{equation*}  
w_0 := -\a\xi \cdot \nabla v_0 + v_0^2, \qquad \forall k \in \N \setminus \{0\}, \ w_k := v_{k-1} v_k + v_k^2.
\end{equation*}
For every $k \in \N$, we have that $w_k \in \L^1$, and by Proposition~\ref{p.multiscale}, for $D_n$ as in~\eqref{e.def.Dn}, 
\begin{equation}  
\label{e.multiscale.rewrite}
\xi \cdot \ahom \xi = \fint_\Rd \xi \cdot \a \xi - \sum_{k = 0}^n 2^k \fint_\Rd w_k - D_n.
\end{equation}
We next estimate the size of the remainder term $D_n$.

\begin{proposition}[Remainder estimate]
\label{p.remainder}
There exists $C(\Lambda,d) < \infty$ such that for every $n \in \N$,
\begin{equation*}  
0 \le D_n \le C 2^{-\frac{nd}{2}}.
\end{equation*}
\end{proposition}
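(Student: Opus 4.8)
The plan is to express $D_n$ as a time integral against the heat semigroup and then use the $L^2$-decay estimate \eqref{e.L2.estimate} for the solution $u$ of \eqref{e.pde.u}, together with the left-tail control on the sum $S_n$ of exponentials recorded in the previous section. First, recall from \eqref{e.def.Dn} that $D_n = \lim_{\lambda \to 0} \fint_\Rd v_n (\lambda - \nabla \cdot \a \nabla)^{-1} v_n$. Since $\lambda \mapsto R_\lambda$ is a decreasing family of nonnegative self-adjoint operators (this is \eqref{e.selfadjoint} together with the spectral-theoretic fact that $R_\lambda \ge R_\mu \ge 0$ when $\lambda \le \mu$), the quantity $\fint_\Rd v_n R_\lambda v_n$ is nonnegative and nondecreasing as $\lambda \downarrow 0$; hence the limit exists in $[0,\infty]$ and $D_n \ge 0$. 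To get the upper bound we use the semigroup representation \eqref{e.resolvent.semigroup}, namely $R_\lambda v_n = \int_0^\infty e^{-\lambda t} P(t) v_n \, \d t$, so that by the Cauchy–Schwarz inequality in $(t,\cdot)$ together with self-adjointness and the semigroup property, $\fint_\Rd v_n R_\lambda v_n = \int_0^\infty e^{-\lambda t} \fint_\Rd v_n P(t) v_n \, \d t$, and each integrand is bounded (via Cauchy–Schwarz and $\|P(t)\|_{L^2 \to L^2} \le 1$) by $\|v_n\|_{\L^2}^2$. Letting $\lambda \to 0$ gives, after a monotone-convergence argument,
\begin{equation*}
D_n = \int_0^\infty \fint_\Rd v_n \, P(t) v_n \, \d t.
\end{equation*}

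Next I would plug in the semigroup representation of $v_n$ from \eqref{e.formula.vk}: $v_n = E[u(S_n, \cdot)]$ where $S_n = \sum_{j=0}^n T_j$ and the $T_j$ are independent exponentials of parameter $2^{-j}$. Using $P(t) v_n = E[u(S_n + t, \cdot)]$ (semigroup property applied to $P(t) P(S_n) v_{-1}$), Cauchy–Schwarz in $\Rd$, and then Jensen's inequality in the $E$-expectation, one gets
\begin{equation*}
\fint_\Rd v_n P(t) v_n \le E\Ll[ \Ll( \fint_\Rd u^2(S_n,\cdot)\Rr)^{1/2} \Ll( \fint_\Rd u^2(S_n+t,\cdot)\Rr)^{1/2}\Rr],
\end{equation*}
and then the $L^2$ bound \eqref{e.L2.estimate}, $\fint_\Rd u^2(s,\cdot) \le C s^{-1-d/2}$ for $s \ge 1$ (combined with the deterministic Lemma~\ref{l.deterministic.u} bound $\fint_\Rd u^2(s,\cdot) \le C s^{-1}$ for $s \le 1$ to cover small times), yields a bound of the schematic form $E[(1+S_n)^{-1/2-d/4}(1+S_n+t)^{-1/2-d/4}]$. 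Integrating this in $t$ over $(0,\infty)$ produces (for $d \ge 1$, so the exponent $1/2 + d/4 > 1$) something of order $E[(1+S_n)^{-1/2-d/4}(1+S_n)^{1/2-d/4}] = E[(1+S_n)^{-d/2}]$, and then the left-tail estimate \eqref{e.left.tail.Sk} with $\beta = d/2$ gives $E[(1+S_n)^{-d/2}] \le C 2^{-nd/2}$, which is the claimed bound.

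The main technical care is needed in two places. First, one must be honest about the short-time regime $S_n \le 1$ and $S_n + t \le 1$: there the decay rate $s^{-1-d/2}$ is not available and one uses instead the cruder $L^\infty$ bound from Lemma~\ref{l.deterministic.u}, checking that the resulting contribution is still $O(2^{-nd/2})$ — this is where estimates like \eqref{e.estim.Skle} on $P[S_n \le s]$ enter, and the double exponential smallness of $S_n$ at short scales makes this term harmless. Second, in low dimension (really just $d=1$, where $1/2 + d/4 = 3/4 < 1$) the $t$-integral $\int_0^\infty (1+S_n+t)^{-1/2-d/4}\, \d t$ does not converge, so the argument must instead be organized by first integrating $\fint_\Rd v_n P(t) v_n \, \d t$ and using that $\int_0^\infty P(t)\, \d t$ applied to $v_n$ is exactly $R_0 v_n$, together with the decay of $v_n$ itself in $\H^{-1}$; concretely, one writes $D_n = \fint_\Rd v_n R_0 v_n \le \|v_n\|_{\H^{-1}}\|v_n\|_{\H^1}$ and controls $\|v_n\|_{\H^{-1}}$ directly from \eqref{e.formula.vk} and the pointwise $u$-decay. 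I expect the bookkeeping around these boundary regimes, rather than any conceptual difficulty, to be the main obstacle; the heart of the matter — decay of $u$ in $L^2$ plus the left-tail of $S_n$ — is already in hand from the preceding results.
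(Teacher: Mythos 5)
Your overall plan is the right one, and you correctly arrive at the formula $D_n = \int_0^\infty \fint_\Rd v_n\, P(t) v_n \, \d t$ and correctly identify the two essential ingredients (the $L^2$ decay of $u$ in \eqref{e.L2.estimate} and the left-tail estimate on $S_n$ in \eqref{e.left.tail.Sk}). But there is a genuine gap in the middle step, and your acknowledgment of it is both misplaced and misdiagnosed.

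The problem is the Cauchy--Schwarz step. You split $\fint_\Rd v_n\,P(t)v_n$ into a product $\|u(S_n,\cdot)\|_{\L^2}\|u(S_n'+t,\cdot)\|_{\L^2}$ (with $S_n'$ an independent copy of $S_n$), and then plug in the decay rate $\|u(s,\cdot)\|_{\L^2}\le C s^{-\frac12-\frac d4}$. The exponent on the $t$-carrying factor is thus $\frac12+\frac d4$, and you assert this is $>1$ ``for $d\ge 1$'' --- that is false: $\frac12+\frac d4 > 1$ holds iff $d\ge 3$. For $d = 2$ the $t$-integral is logarithmically divergent, and for $d = 1$ it is power divergent. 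So your argument as written does not close for $d \in \{1,2\}$, and your claim that only $d = 1$ is a boundary case is incorrect. The fallback you propose for $d=1$, $D_n = \fint v_n R_0 v_n \le \|v_n\|_{\H^{-1}}\|v_n\|_{\H^1}$, is also wrong as stated: the duality pairing gives $\fint v_n (R_0 v_n) \le \|v_n\|_{\H^{-1}}\|R_0 v_n\|_{\H^1}$, and on $\Rd$ the operator $R_0 = (-\nabla\cdot\a\nabla)^{-1}$ is \emph{not} bounded from $\H^{-1}$ to $\H^1$ because there is no Poincar\'e inequality on the whole space, so $\|R_0 v_n\|_{\H^1}$ cannot be traded for $\|v_n\|_{\H^{-1}}$. (Indeed, the finiteness of $D_n$ at all is nontrivial and is precisely what Proposition~\ref{p.multiscale} establishes.)

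The paper avoids all of this by a small but decisive device: instead of Cauchy--Schwarz, it uses the self-adjointness of $P(t)$ \emph{as an identity}, $\fint P(a) v_{-1}\cdot P(b) v_{-1} = \fint \Ll(P\Ll(\frac{a+b}{2}\Rr)v_{-1}\Rr)^2 = \fint u^2\Ll(\frac{a+b}{2},\cdot\Rr)$. This keeps the whole exponent $1+\frac d2$ inside a single bracket $\Ll(\frac{S_n+S_n'+t}{2}\Rr)^{-1-\frac d2}$, and since $1+\frac d2 > 1$ for every $d\ge 1$, the $t$-integral converges in all dimensions; integrating gives $C\,(S_n+1)^{-\frac d2}$ and then \eqref{e.left.tail.Sk} finishes. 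Cauchy--Schwarz throws away exactly this identity and pays for it by splitting the exponent $1+\frac d2$ into two halves, each of which is too small when $d\le 2$. Your handling of the short-time regime (via \eqref{e.estim.Skle}) and your identification of positivity are both fine; replace the Cauchy--Schwarz step with the semigroup-square identity and the argument works uniformly in $d$ without any fallback.
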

\begin{proof}
We keep denoting by $S_k$ the random variable defined in \eqref{e.def.Sk}, and we let~$S_k'$ be an independent copy of $S_k$. We also let $T^{(\lambda)}$ be an independent exponential random variable of parameter $\lambda$, and denote the expectation with respect to these random variables by $E$. We recall that the introduction of these random variables allows us for convenient representations such as those in \eqref{e.resolvent.Tlambda} and \eqref{e.formula.vk}. Combining these representations with the definition of $D_k$ in \eqref{e.def.Dn}, we obtain that
\begin{equation*}  
D_k = \lim_{\lambda \to 0} \lambda^{-1} \fint_\Rd E \Ll[ P(S_k) v_{-1} P(T^{(\lambda)}+S_k')v_{-1}\Rr] .
\end{equation*}
Since $P(t)$ is self-adjoint in $\L^2$, we have
\begin{align*}  
\fint_\Rd E \Ll[ P(S_k) v_{-1} P(T^{(\lambda)}+S_k')v_{-1}\Rr] 
& = \fint_\Rd E \Ll[ \Ll(P \Ll( \frac{S_k + S_k' + T^{(\lambda)}}{2} \Rr) v_{-1}\Rr)^2 \Rr] 
\\
& = \fint_\Rd E \Ll[ u^2\Ll( \frac{S_k + S_k' + T^{(\lambda)}}{2} ,\cdot\Rr) \Rr] ,
\end{align*}
and thus
\begin{equation}  
\label{e.positivity.Dk}
D_k = \int_0^{+\infty} \fint_\Rd E \Ll[ u^2\Ll( \frac{S_k + S_k' + t}{2} ,\cdot\Rr) \Rr]  \, \d t \ge 0. 
\end{equation}
In order to estimate the integral over $t \in [0,1]$, we use independence to get that
\begin{equation*}  
P \Ll[ S_k + S_k' \le s \Rr] \le \Ll(P \Ll[ S_k \le s \Rr] \Rr)^2,
\end{equation*}
and then proceed as in \eqref{e.estim.Skle} and \eqref{e.estim.shorttime}. For the remaining part, we use \eqref{e.L2.estimate} and the triangle inequality to deduce that 
\begin{equation*}  
D_k  \le C \int_1^\infty E \Ll[ (S_k + S_k' + t)^{-1 -\frac d 2} \Rr] \, \d t 
  \le 
C \int_1^\infty E \Ll[ \Ll(S_k + \frac t 2\Rr)^{-1 -\frac d 2} \Rr] \, \d t .
\end{equation*}
Integrating in $t$ and then appealing to \eqref{e.left.tail.Sk}, we obtain
\begin{equation*}  
D_k \le C E \Ll[ (S_k + 1)^{-\frac d 2} \Rr]  \le C 2^{-\frac{kd}{2}},
\end{equation*}
as announced.
\end{proof}

In the next proposition, we replace the global averages $\fint_\Rd$ appearing in \eqref{e.multiscale.rewrite} by averages against a heat kernel mask. Recall the definition of $\Phi$ in \eqref{e.def.heatkernel}.
\begin{proposition}[CLT cancellations]
\label{p.spatav}
For every $\de > 0$, there exist $\si(\delta,d) > 1$ and $C(\delta,\Lambda,d) < \infty$ such that for every $k \in \N\setminus \{0\}$, $x \in \R$ and $s > 0$,
\begin{equation}  
\label{e.spatav}
\Ll|\int_\Rd w_k(y) \Phi(s,x-y) \, \d y - \fint_{\Rd} w_k  \Rr| 
\le  \O_\si \Ll( C 2^{-k \Ll( 1 + \frac d 2 - \de \Rr) } \Ll(  \frac{s}{2^k \log^2(2+k)} + 1 \Rr) ^{-\frac d 4} \Rr) .
\end{equation}
For $k =0$, the same estimate holds with the additional restriction $s \ge 1$.
\end{proposition}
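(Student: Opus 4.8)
The plan is to split the difference into the fluctuation of $w_k$ about its mean and a deterministic ``periodic smoothing'' error, to treat the fluctuation by localizing $w_k$ at a hierarchy of scales adapted to $2^{k/2}$ and applying the cancellation Lemma~\ref{l.barO.boxes}, and to treat the smoothing error by hand using $\Zd$-periodicity. It suffices to prove \eqref{e.spatav} with $2^k\log^2(2+k)$ replaced by $(1+k)2^k$ in the denominator: applying Proposition~\ref{p.quantitative.vk} with $\delta$ shrunk by a fixed factor absorbs this discrepancy, and the various factor-of-two losses in the exponent below, into the slack $2^{k\delta}$ (note $\bigl((1+k)/\log^2(2+k)\bigr)^{d/4}\le C_\delta 2^{k\delta}$), recovering the stated exponent $1+\frac d 2-\delta$. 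Writing $m_k(y):=\E[w_k(y)]$, which is $\Zd$-periodic by \eqref{e.stationary}, so that $\fint_\Rd w_k=\fint_{\T}m_k$, and using $\int_\Rd\Phi(s,x-\cdot)=1$, we decompose
\begin{equation*}
\int_\Rd w_k(y)\,\Phi(s,x-y)\,\d y-\fint_\Rd w_k
=\underbrace{\int_\Rd\bigl(w_k(y)-m_k(y)\bigr)\Phi(s,x-y)\,\d y}_{=:\,\mathrm I}
+\underbrace{\bigl(m_k*\Phi(s,\cdot)\bigr)(x)-\fint_{\T}m_k}_{=:\,\mathrm{II}}.
\end{equation*}

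The deterministic term $\mathrm{II}$ is handled by Fourier series. Bounding the second moments of $v_{k-1}$ and $v_k$ by \eqref{e.estim.v} and Cauchy--Schwarz gives $\|m_k\|_{L^\infty}\le C 2^{-k(1+\frac d 2-\delta)}$ for $k\ge1$, so $m_k\in L^2(\T)$; since the heat semigroup multiplies $e^{2\pi i z\cdot y}$ by $e^{-4\pi^2 s|z|^2}$, one obtains $\mathrm{II}=\sum_{z\in\Zd\setminus\{0\}}\widehat m_k(z)\,e^{2\pi i z\cdot x}\,e^{-4\pi^2 s|z|^2}$, whence by Cauchy--Schwarz $|\mathrm{II}|\le C e^{-4\pi^2\max(s-1,0)}\|m_k\|_{L^2(\T)}\le C e^{-4\pi^2\max(s-1,0)} 2^{-k(1+\frac d 2-\delta)}$. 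An elementary check then shows this is at most $C 2^{-k(1+\frac d 2-\delta)}\bigl(\tfrac{s}{(1+k)2^k}+1\bigr)^{-d/4}$ for every $s>0$ when $k\ge1$, and for every $s\ge1$ when $k=0$ (using in the latter case $\|m_0\|_{L^2(\T)}\le C$, which follows from $v_0,\nabla v_0\in\L^2$).

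For the random term $\mathrm I$, fix a large $A=A(\Lambda,d)$, set $R_k^{(j)}:=A\sqrt{1+k}\,2^{k/2}\,2^{j}$ for $j\ge0$ and $R_k^{(-1)}:=0$, and using the localized functions of Proposition~\ref{p.quantitative.vk}(2) define the $\F(B(y,r))$-measurable field $\hat w_k(r,y):=v_{k-1}(r,y)v_k(r,y)+v_k(r,y)^2$ (resp.\ $-\a(y)\xi\cdot\nabla v_0'(r,y)+v_0(r,y)^2$ when $k=0$), with $\hat w_k(0,\cdot):=0$, and put $\bar w_k(r,y):=\hat w_k(r,y)-\E[\hat w_k(r,y)]$. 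Combining \eqref{e.deterministic.v}, \eqref{e.loc.v} and \eqref{e.loc.nabla.v} gives $\|w_k-\hat w_k(r,\cdot)\|_{L^\infty}\le C 2^{-k}\exp(-C^{-1}2^{-k}r^2)$ for $r\ge2$, so for $A$ large the telescoping $w_k-m_k=\sum_{j\ge0}\bigl(\bar w_k(R_k^{(j)},\cdot)-\bar w_k(R_k^{(j-1)},\cdot)\bigr)$ holds pointwise and in $L^\infty$; its $j$-th summand is centered, $\F(B(\cdot,R_k^{(j)}))$-measurable, of pointwise size $\le C 2^{-k}\exp(-C^{-1}2^{-k}(R_k^{(j-1)})^2)$ for $j\ge1$, and of pointwise size $\le\O_{\si/2}\bigl(C 2^{-k(1+\frac d 2-\delta)}\bigr)$ for $j=0$ by \eqref{e.estim.v} — here $\si>2$ is the exponent of \eqref{e.estim.v}, which we take to lie in $(2,4]$, so that a product of two $\O_\si$ variables is $\O_{\si/2}$ with $\si/2\in(1,2]$; it is exactly the choice $R_k^{(0)}\asymp\sqrt{1+k}\,2^{k/2}$ that renders the deterministic localization error negligible against this stochastic bound. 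For each $j$, tile $\Rd$ by cubes $Q_z$ of side comparable to $R_k^{(j)}$, indexed by a lattice $\rho_j\Z^d$ with $\rho_j\asymp R_k^{(j)}$, so that the integral over $Q_z$ of the $j$-th summand against $\Phi(s,x-\cdot)$ is a centered random variable measurable with respect to a concentric cube of twice the side; by Lemma~\ref{l.sum-O} it is $\O_{s_j}(\theta_j(z))$, where $\theta_j(z)$ equals the pointwise size of the summand times $\int_{Q_z}\Phi(s,x-y)\,\d y$, with $s_j=\si/2$ for $j=0$ and $s_j=2$ for $j\ge1$. Since $\sum_z\bigl(\int_{Q_z}\Phi(s,x-y)\,\d y\bigr)^2\le\sup_z\int_{Q_z}\Phi(s,x-y)\,\d y\le C\min\{1,((R_k^{(j)})^2/s)^{d/2}\}$, Lemma~\ref{l.barO.boxes} bounds the contribution of the $j$-th summand to $\mathrm I$ by $\O_{\si/2}\bigl(C 2^{-k(1+\frac d 2-\delta)}(\tfrac{s}{(1+k)2^k}+1)^{-d/4}\bigr)$ when $j=0$, and by $\O_{2}\bigl(C 2^{-k}e^{-C^{-1}A^2(1+k)4^{j-1}} 4^{jd/4}(\tfrac{s}{(1+k)2^k}+1)^{-d/4}\bigr)$ when $j\ge1$ (using $\min\{1,(a4^j)^{d/4}\}\le4^{jd/4}\min\{1,a^{d/4}\}$). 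Summing over $j$ — the $j\ge1$ series is dominated by its first term, which for $A$ large is $\le C 2^{-k(1+\frac d 2-\delta)}(\cdots)^{-d/4}$ — and adding $\mathrm{II}$ proves \eqref{e.spatav} with denominator $(1+k)2^k$, hence the claim.

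The one genuinely delicate point is that $\|w_k-\hat w_k(r,\cdot)\|_{L^\infty}$ is controlled only deterministically, by $C 2^{-k}e^{-C^{-1}2^{-k}r^2}$, which drops below the typical size $2^{-k(1/2+d/4)}$ of $v_k$ only once $r\gtrsim\sqrt k\,2^{k/2}$; a single localization radius therefore cannot reproduce the decay in $s$ of \eqref{e.spatav} for large $s$, whereas the hierarchical decomposition does, because each increment, localized at its own scale $R_k^{(j)}$, picks up its own cancellation gain $\min\{1,((R_k^{(j)})^2/s)^{d/4}\}$ after Lemma~\ref{l.barO.boxes}, while the super-geometric decay $e^{-cA^2 4^{j}}$ of the increments makes the $j\ge1$ contributions negligible uniformly in $s$. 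The restriction $s\ge1$ enters only for $k=0$, where $\nabla v_0\notin L^\infty$ and $w_0$ has no pointwise meaning: there one runs the same argument with the unit-cube $L^2$-averages of $\nabla v_0$ in place of its pointwise values (these inherit the analogues of \eqref{e.estim.v} and \eqref{e.loc.nabla.v} from Proposition~\ref{p.quantitative.vk} and standard parabolic estimates), the mollification by $\Phi(s,\cdot)$ with $s\ge1$ being exactly what legitimizes this step.
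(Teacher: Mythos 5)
Your proposal is correct and follows essentially the same route as the paper's proof: split $w_k$ into its ($\Z^d$-periodic) mean and a centered fluctuation, control the periodic part by the exponential smoothing property of the heat kernel, and control the fluctuation by telescoping the localized approximations of Proposition~\ref{p.quantitative.vk}(2) over a geometric hierarchy of radii anchored just above $2^{k/2}$ and applying Lemma~\ref{l.barO.boxes} on a tiling at each level. Your base radius $\asymp\sqrt{1+k}\,2^{k/2}$ is coarser than the paper's $2^{k/2}\log(2+k)$, but as you note the ratio is absorbed by the $\delta$-slack; the one slip is the claim that Cauchy--Schwarz gives $|\mathrm{II}|\le C e^{-4\pi^2\max(s-1,0)}\|m_k\|_{L^2(\T)}$ for all $s>0$ --- for $s<1$ the Fourier sum $\sum_{z\ne0}e^{-8\pi^2 s|z|^2}$ is unbounded as $s\to0$, and one should simply invoke $|\mathrm{II}|\le 2\|m_k\|_{L^\infty}$ in that regime, which is what the target bound needs anyway.
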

\begin{proof}
We first record the following elementary observation: for every $\si,\theta > 0$ and random variables $X$ and $Y$, we have 
\begin{equation}
\label{e.mult.Os}
\mbox{$|X| \le \O_{2\si} (\theta)$ and $|Y| \le \O_{2\si}(\theta)$} \quad \implies \quad |XY| \le \O_\si (\theta^2).
\end{equation}
Indeed, this follows from
\begin{align*}  
\E \Ll[ \exp \Ll( \Ll(\theta^{-2} |XY| \Rr)^{\si}\Rr)  \Rr] 
& 
\le \E \Ll[ \exp \Ll( \frac 1 2 \Ll(\theta^{-1} |X|\Rr)^{2\si} + \frac 1 2 \Ll(\theta^{-1} |Y|\Rr)^{2\si} \Rr)  \Rr] 
\\
& \le \E \Ll[ \exp \Ll( \Ll(\theta^{-1} |X|\Rr)^{2\si}\Rr)\Rr]^\frac 1 2 \,  \E \Ll[ \exp \Ll( \Ll(\theta^{-1} |Y|\Rr)^{2\si} \Rr)  \Rr]^\frac 1 2 \le 2.
\end{align*}
We decompose the rest of the proof into three steps.

\smallskip



\emph{Step 1.} We set 
\begin{equation}  
\label{e.def.rk'}
r_k' := 2^\frac k 2 \log(2+k).
\end{equation}
In this step, we observe that the statement \eqref{e.spatav} is valid when $\sqrt s \le r_k'$. Indeed, for $k \ge 1$, the statement \eqref{e.spatav} with $\sqrt s \le r_k'$ follows from \eqref{e.estim.v}, \eqref{e.mult.Os} and Lemma~\ref{l.sum-O}. For $k = 0$, we also need a bound on $\nabla v_0$, which is provided by the following deterministic estimate: 
there exists a constant $C(\Lambda,d) < \infty$ such that for every $x \in \Rd$,
\begin{equation}  
\label{e.deterministic.nablav0}
\|\nabla v_0\|_{L^2(x+\cu_0)} \le C.
\end{equation}
This estimate is a consequence of the Caccioppoli inequality and \eqref{e.deterministic.v}.

\smallskip

\emph{Step 2.}
We reformulate \eqref{e.spatav} into an equivalent form that will be more convenient for the analysis. 
For every $k \in \N$, we denote 
\begin{equation*}  
\td w_k := w_k - \E[ w_k].
\end{equation*}
We show that it suffices to prove Proposition~\ref{p.spatav} for $\sqrt s \ge r_k'$ and with \eqref{e.spatav} replaced by
\begin{equation}
\label{e.spatav.td}
\int_\Rd \td w_k(y) \Phi(s,x-y) \, \d y  =  \O_\si \Ll( C 2^{-k \Ll( 1 + \frac d 2 - \de \Rr) } \Ll( \frac{\sqrt s}{r_k'} +1 \Rr) ^{-\frac d 2} \Rr) .
\end{equation}
For every $k \ge 1$, the mapping $x \mapsto \E[w_k(x)]$ is $\Z^d$-periodic, and by~\eqref{e.estim.v}, it is uniformly bounded by $C2^{-k \Ll( 1 + \frac d 2 - \delta \Rr)}$. In the case $k = 0$, the mapping $x \mapsto \E[w_0(x)]$ is $\Z^d$-periodic and in $L^2([0,1]^d)$, as follows from~\eqref{e.deterministic.v} and the fact that $v_0 \in \mcl H^1$. Hence, for every $k \in \N$, there exists a constant $C(k,\Lambda,d) < \infty$ such that for every $x \in \Rd$ and $s \ge 1$, 
\begin{equation*}  
\Ll|\int_\Rd \E[w_k(y)] \Phi(s,x-y) \, \d y - \fint_\Rd w_k \Rr| \le C 2^{-k \Ll( 1 + \frac d 2 - \delta \Rr)}\exp \Ll( -C^{-1} s \Rr) .
\end{equation*}
See for instance \cite[Exercise~3.7]{AKMbook} for a proof. As a consequence, the statements~\eqref{e.spatav} and \eqref{e.spatav.td} are equivalent, up to an adjustment of the constant~$C$.

\smallskip

\emph{Step 3.}
Without loss of generality, it suffices to prove \eqref{e.spatav.td} for $x = 0$. 
For each $r \ge r_k'$, we define
\begin{equation*}  
\td w_k'(r,x) := w_k'(r,x) - \E \Ll[ w_k'(r,x) \Rr] .
\end{equation*}
By \eqref{e.loc.v}-\eqref{e.loc.nabla.v}, there exists $C(\Lambda,d) < \infty$ such that for every $k \in \N$ and $r \ge r_k'$,
\begin{equation}  
\label{e.whatmoredoyouwant}
\Ll|\td w_k(x) - \td w_k'(r,x) \Rr| \le C 2^{-100dk} \exp \Ll( -C^{-1} 2^{-k} r^2 \Rr) .
\end{equation}
In this step, we leave aside the case $k = 0$ and show that there exists $C(\de,\Lambda,d) < \infty$ such that for every $k \ge 1$, $r \ge r_k'$ and $s > 0$,
\begin{equation}  
\label{e.spatav.tdr}
\int_\Rd \td w_k'(r,x) \Phi(s,x) \, \d x  = \O_\si \Ll( C 2^{-k \Ll( 1 + \frac d 2 - \de \Rr) } \Ll( \frac {s} {r^2} + 1  \Rr) ^{-\frac d 4}  \Rr) .
\end{equation}
We denote the cube of side length~$r$ centered at the origin by
\begin{equation*}  
\cu_r := \Ll( -\frac r 2, \frac r 2 \Rr)^d.
\end{equation*}
By \eqref{e.whatmoredoyouwant} and the same argument as in Step 1 of this proof, we may assume that $s \ge r^2$. 
We decompose the left side of \eqref{e.spatav.tdr} into
\begin{equation*}  
\int_\Rd \td w_k'(r,x) \Phi(s,x) \, \d x = \sum_{z \in r \Zd} \int_{z + \cu_r} \td w_k'(r,x) \Phi(s,x) \, \d x .
\end{equation*}
By \eqref{e.estim.v}, \eqref{e.mult.Os} and Lemma~\ref{l.sum-O}, there exists $\sigma > 1$ such that, for each $z \in \Zd$,
\begin{equation*}  
\int_{z + \cu_r} \td w_k'(r,x) \Phi(s,x) \, \d x = \O_\sigma \Ll( C 2^{-k \Ll( 1 + \frac d 2 - \delta \Rr)} \|\Phi(s,\cdot)\|_{L^1(z + \cu_r)}  \Rr) .
\end{equation*}
By Lemma~\ref{l.barO.boxes}, we obtain that
\begin{align*}  
\sum_{z \in r \Zd}\int_{z + \cu_r} \td w_k'(r,x) \Phi(s,x) \, \d x = \O_\sigma \Ll( C 2^{-k \Ll( 1 + \frac d 2 - \de \Rr)} \Ll(\sum_{z \in r\Zd} \|\Phi(s,\cdot)\|_{L^1(z + \cu_r)}^2\Rr)^\frac 1 2 \Rr) .
\end{align*}
We conclude that \eqref{e.spatav.tdr} holds by observing that, since $s \ge r^2$,
\begin{equation*}  
\sum_{z \in r\Zd} \|\Phi(s,\cdot)\|_{L^1(z + \cu_r)}^2  \le C r^d s^{-\frac d 2}. 
\end{equation*}

\smallskip

\emph{Step 4.} In this step, we show that there exists $C(\de,\Lambda,d) < \infty$ such that for every $k \ge 1$,  $r \ge r_k'$ and $s > 0$,
\begin{equation}  
\label{e.spatav.tdr.2r}
\int_\Rd \Ll(\td w_k'(2r,x) - \td w_k'(r,x)\Rr) \Phi(s,x) \, \d x  
= \O_\si \Ll( C 2^{-k \Ll( 1 + \frac d 2 - \de \Rr) } \Ll( \frac {s} {r^2} + 1  \Rr) ^{-\frac d 4}  \exp \Ll( -\frac{r^2}{C 2^k} \Rr) \Rr) .
\end{equation}
The argument is similar to that of the previous step, only simpler, using only \eqref{e.whatmoredoyouwant} and not requiring any appeal to \eqref{e.estim.v}.

\smallskip

\emph{Step 5.} We complete the proof. It is clear from \eqref{e.whatmoredoyouwant} that 
\begin{equation*}  
\td w_k(x) = \lim_{r \to \infty} \td w'_k(r,x).
\end{equation*}
We thus decompose $\td w_k(x)$ into
\begin{equation*}  
\td w_k(x) = \td w_k'(r_k',x) + \sum_{j = 0}^{+\infty} \Ll( \td w_k'(2^{j+1}r_k',x) - \td w_k'(2^jr_k',x) \Rr) .
\end{equation*}
Applying \eqref{e.spatav.tdr} to the first term, \eqref{e.spatav.tdr.2r} to each of the summands, and summing the result, we obtain \eqref{e.spatav.td} for $k \ge 1$. In the case $k = 0$, the same reasoning applies, using also the deterministic estimate on $\nabla v_0$ provided in \eqref{e.deterministic.nablav0}, and the estimate \eqref{e.loc.nabla.v} to localize the dependency of this term on the coefficient field.
\end{proof}

We have now obtained almost optimal information on the behavior of $w_k$ when tested against the heat kernel. Since we want to understand the behavior of this field against an arbitrary mask, we now upgrade this information into an $H^{-\ell}$ estimate using the following lemma, which is a rescaled version of~\cite[Remark~D.6]{AKMbook}. In order to keep the presentation of the argument as simple as possible, we only state this lemma for $L^2$-based Sobolev spaces with integer-valued regularity exponents. Recall the definitions of the rescaled $H^\ell$ and $H^{-\ell}$ norms in \eqref{e.def.Hell} and \eqref{e.def.H-ell}.
\begin{lemma}[Sobolev norm from heat-kernel convolutions]
\label{l.sobolev.app}
For every $\ell \in \N$, there exists a constant $C(\ell,d) < \infty$ such that for every $f \in H_\mathrm{loc}^{-\ell}(\Rd)$ and $r > 0$, we have
\begin{equation*}  
\|f\|_{\un H^{-\ell}(B(0,r))}^2 \le C r^{-d} \int_{\Rd} \exp \Ll( -\frac {|x|}{r} \Rr) \int_0^{r^2} s^{\ell-1} |f \ast \Phi(s,\cdot)|^2 (x) \, \d s \, \d x.
\end{equation*}
\end{lemma}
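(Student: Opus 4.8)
The plan is to obtain the $\un H^{-\ell}$ bound by duality, estimating $\fint_{B(0,r)} fg$ for an arbitrary test function $g\in H^\ell_0(B(0,r))$ with $\|g\|_{\un H^\ell(B(0,r))}\le 1$, and then squaring and optimizing. The natural device is to write $g$ (or rather $f$ paired against $g$) in terms of the heat semigroup acting on $g$: since $\Phi(s,\cdot)$ is the heat kernel, for any fixed exponent $\ell$ one has the calculus identity
\begin{equation*}
\int_0^{r^2} \partial_s^{\ell}\bigl(\Phi(s,\cdot)\ast g\bigr)\, \frac{s^{\ell}}{\ell!}\,(-1)^\ell\,\d s \;=\; g \;-\; (\text{boundary terms at } s=r^2),
\end{equation*}
obtained by integrating by parts $\ell$ times in $s$, using $\partial_s\Phi=\Delta\Phi$ and $\Phi(0,\cdot)\ast g=g$. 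This expresses $g$, up to a smooth remainder coming from $s=r^2$, as a superposition over scales $s\in(0,r^2)$ of $(s\Delta)^\ell(\Phi(s,\cdot)\ast g)$ weighted by $\d s/s$. Pairing with $f$ and moving all $\ell$ Laplacians onto $f$ (which is allowed since $\Phi\ast$ commutes with $\Delta$ and $f\ast\Phi(s,\cdot)$ is smooth), we get $\fint fg$ as an integral over $s$ of $\fint (f\ast\Phi(s,\cdot))\,(s^\ell\Delta^\ell g\ast\Phi(s,\cdot)\text{-type terms})$, up to the remainder term at scale $r^2$.

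Next I would estimate each piece. For the main scale-integral term, apply Cauchy–Schwarz in $x$ and in $s$ (with the weight $\d s/s$ built in), which produces exactly the factor $\int_0^{r^2} s^{\ell-1}|f\ast\Phi(s,\cdot)|^2\,\d s$ on the $f$-side; on the $g$-side one needs that $\int_0^{r^2}\!\!\int s^{\ell+1}|\Delta^\ell(\Phi(s,\cdot)\ast g)|^2 \frac{\d s}{s}\lesssim \|g\|^2$ in the appropriate rescaled norm. This is a standard Littlewood–Paley/heat-semigroup square-function bound: $\|(s\Delta)^{\ell}e^{s\Delta}g\|_{L^2}\lesssim\|(-\Delta)^\ell g\|_{L^2}\cdot$ (decaying factor), and integrating $\d s/s$ against it gives the $\dot H^\ell$ (hence $H^\ell$) norm of $g$, up to constants depending only on $\ell,d$. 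The extra spatial weight $\exp(-|x|/r)$ in the statement is handled by localization: since $g$ is supported in $B(0,r)$, the heat flow $\Phi(s,\cdot)\ast g$ for $s\le r^2$ is essentially concentrated in $B(0,Cr)$, with Gaussian tails that are dominated by $\exp(-|x|/r)$; inserting $\exp(-|x|/r)\cdot\exp(+|x|/r)$ and using the Gaussian decay to absorb the growing factor lets one replace $\int_\Rd$ by $\int_\Rd \exp(-|x|/r)$ on the $f$-side at the cost of a constant. The normalization prefactor $r^{-d}$ is exactly the Jacobian relating $\fint_{B(0,r)}$ to $\int_{B(0,r)}$ and matches the rescaled-norm conventions in \eqref{e.def.Hell}–\eqref{e.def.H-ell}; tracking it is just bookkeeping, most transparently done by first rescaling to $r=1$, proving the inequality there, and rescaling back.

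The remainder term from the upper endpoint $s=r^2$ contributes something like $\fint f\cdot\bigl(\Phi(r^2,\cdot)\ast(\text{low-order derivatives of }g)\bigr)$; here $\Phi(r^2,\cdot)$ is a smooth bump of width $\sim r$, so this term is bounded by $\|f\ast\Phi(r^2,\cdot)\|_{L^2}\|g\|_{L^2}$ times constants, and $\|f\ast\Phi(r^2,\cdot)\|_{L^2}^2\lesssim r^{-2}\int_{r^2/2}^{r^2}|f\ast\Phi(s,\cdot)|^2\d s$ by a trivial comparison of nearby heat times, which is again controlled by the right-hand side of the claimed inequality (the weight $s^{\ell-1}$ is comparable to $r^{2\ell-2}$ there). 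So this term is harmless. The main obstacle, and the only place any real content enters, is the square-function estimate $\int_0^{r^2}\!\!\int_\Rd s^{\ell+1}|\Delta^\ell(\Phi(s,\cdot)\ast g)|^2\,\frac{\d x\,\d s}{s}\lesssim \|g\|_{\un H^\ell(B(0,r))}^2$ together with the Gaussian-tail localization that upgrades the clean $\int_\Rd$ to the weighted $\int_\Rd\exp(-|x|/r)$; both are classical, but assembling them with the correct powers of $r$ requires care. Since the lemma is quoted as a rescaled version of \cite[Remark~D.6]{AKMbook}, I would in fact simply invoke that reference for the $r=1$ case and record the scaling computation, noting that the weight $\exp(-|x|/r)$ (as opposed to a sharp cutoff) is what makes the inequality scale cleanly and hold with $\int_\Rd$ rather than $\int_{B(0,Cr)}$.
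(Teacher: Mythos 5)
The paper itself offers no proof of this lemma: it is stated as ``a rescaled version of \cite[Remark~D.6]{AKMbook}'' and the argument is left entirely to that reference. Your bottom line---quote the $r=1$ case from the reference and rescale, tracking the powers of $r$ against the rescaled-norm conventions \eqref{e.def.Hell}--\eqref{e.def.H-ell}---is therefore exactly what the paper does, so in that sense your proposal matches.

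Your sketch of how the underlying estimate would be proved from scratch is structurally sound (duality; Taylor expansion of $g$ along the heat flow; split Cauchy--Schwarz; a square-function bound on the $g$-side; Gaussian absorption of the $\exp(-|x|/r)$ weight), but your starting identity is off by one. Writing $G(s):=\Phi(s,\cdot)\ast g$, the correct integral-remainder formula is
\begin{equation*}
g = G(0) = \sum_{j=0}^{\ell-1}\frac{(-r^2)^j}{j!}\,G^{(j)}(r^2) \;+\; \frac{(-1)^\ell}{(\ell-1)!}\int_0^{r^2} s^{\ell-1}\,G^{(\ell)}(s)\,\d s,
\end{equation*}
with weight $s^{\ell-1}/(\ell-1)!$, not $s^\ell/\ell!$ as you wrote. (With your weight, $\ell$-fold integration by parts collapses the integral to $\int_0^{r^2}G(s)\,\d s$ plus boundary terms---not $g$; try $\ell=1$.) This slip propagates to the $g$-side square function: your stated bound $\int_0^{r^2}\!\int s^{\ell+1}|\Delta^\ell(\Phi(s,\cdot)\ast g)|^2\,\frac{\d s}{s}\,\d x\lesssim\|g\|^2$ carries the weight $s^{\ell}\,\d s$, which on the Fourier side produces $\|g\|_{\dot H^{\ell-1}}^2$ rather than the needed $\|g\|_{\dot H^{\ell}}^2$. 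The correct weight $s^{\ell-1}\,\d s$ gives $\|g\|_{\dot H^{\ell}}^2$, and the split Cauchy--Schwarz then produces $s^{\ell-1}|f\ast\Phi(s/2,\cdot)|^2$ on the $f$-side, matching the statement after the substitution $s\mapsto 2s$. The remaining ingredients you describe (absorbing $\exp(-|x|/r)$ via Gaussian tails of $\Phi(s,\cdot)\ast g$ for $s\le r^2$ when $g$ is supported in $B(0,r)$; controlling the $s=r^2$ boundary terms by comparison of nearby heat times; rescaling) are fine.
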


Combining this lemma with Proposition~\ref{p.spatav} yields the following estimate.
\begin{lemma}[Sobolev norm for $w_k$]
\label{l.sobolev}
For every $\de > 0$, there exist $\si(\de,d)  > 1$ and, for every $\ell \in \N$ satisfying $\ell > \frac d 2$, a constant $C(\de,\ell,\Lambda,d) < \infty$ such that for every $k \in \N$ and $r \ge 1$, we have
\begin{equation*}  
r^{-\ell} \Ll\|w_k - \fint_\Rd w_k\Rr\|_{\un H^{-\ell}(B(0,r))} \le \O_\si \Ll( C 2^{-k \Ll( 1 + \frac d 2 - \de \Rr) } \, \Ll(\frac{r}{2^\frac k 2 \log(2+k)}+1\Rr)^{-\frac d 2}\Rr) .
\end{equation*}
\end{lemma}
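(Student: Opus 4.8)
The plan is to combine Lemma~\ref{l.sobolev.app} with the pointwise (in $s$ and $x$) heat-kernel estimate of Proposition~\ref{p.spatav}. Write $g_k := w_k - \fint_\Rd w_k$. By Lemma~\ref{l.sobolev.app} applied on the ball $B(0,r)$, for $\ell > \frac d 2$ we have
\begin{equation*}
\|g_k\|_{\un H^{-\ell}(B(0,r))}^2 \le C r^{-d} \int_{\Rd} \exp\Ll( -\tfrac{|x|}{r}\Rr) \int_0^{r^2} s^{\ell-1} \, \Ll| g_k \ast \Phi(s,\cdot) \Rr|^2(x) \, \d s \, \d x .
\end{equation*}
Since $g_k \ast \Phi(s,\cdot)(x) = \int_\Rd w_k(y)\Phi(s,x-y)\,\d y - \fint_\Rd w_k$, Proposition~\ref{p.spatav} gives, for some $\si_0(\de,d) > 1$, the bound $|g_k \ast \Phi(s,\cdot)(x)| \le \O_{\si_0}\big( C 2^{-k(1+\frac d2 - \de)} A_k(s)\big)$ with $A_k(s) := \big( \frac{s}{2^k\log^2(2+k)} + 1\big)^{-\frac d 4}$, uniformly in $x$ (recall that for $k=0$ this requires $s \ge 1$; I will deal with the small-$s$ part of the $k=0$ integral separately using the deterministic bounds \eqref{e.deterministic.v} and \eqref{e.deterministic.nablav0}, which bound $|g_0 \ast \Phi(s,\cdot)|$ by a constant and contribute only $\int_0^1 s^{\ell-1}\,\d s \le C$ to the inner integral — harmless after the $r^{-d}\int \exp(-|x|/r)\,\d x \le C$ spatial integration).

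Next I would square the $\O_{\si_0}$ bound: by \eqref{e.mult.Os}, squaring turns $\O_{\si_0}(\theta)$ into $\O_{\si_0/2}(\theta^2)$, so $|g_k\ast\Phi(s,\cdot)(x)|^2 \le \O_{\si}\big( C 2^{-2k(1+\frac d2-\de)} A_k(s)^2 \big)$ with $\si := \si_0/2 > \tfrac12$ — but since $\si_0$ can be taken arbitrarily close to the value furnished by Proposition~\ref{p.spatav} and that value exceeds $1$, after renaming $\de$ we may arrange $\si > 1$ as claimed (alternatively absorb the factor of $2$ into the choice of $\si$ in Proposition~\ref{p.spatav}). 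Then I integrate this against the finite measure $C r^{-d}\exp(-|x|/r)\, s^{\ell-1}\,\d s\,\d x$ on $\Rd \times (0,r^2)$ and invoke Lemma~\ref{l.sum-O} (stability of $\O_\si$-bounds under averaging against a measure, valid since $\si \ge 1$) to pull the $\O_\si$ outside the integral. This yields
\begin{equation*}
\|g_k\|_{\un H^{-\ell}(B(0,r))}^2 \le \O_\si\Ll( C 2^{-2k(1+\frac d2-\de)} \, r^{-d}\!\int_\Rd \exp\Ll(-\tfrac{|x|}{r}\Rr)\d x \int_0^{r^2} s^{\ell-1} A_k(s)^2 \, \d s \Rr) .
\end{equation*}
The spatial integral is $C r^d \cdot r^{-d} = C$, so what remains is the deterministic computation of $I_k(r) := \int_0^{r^2} s^{\ell-1}\big(\frac{s}{\rho_k^2}+1\big)^{-\frac d 2}\,\d s$ where $\rho_k := 2^{k/2}\log(2+k)$. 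Splitting at $s = \rho_k^2$: on $[0,\rho_k^2]$ the integrand is $\le s^{\ell-1}$, giving $\le C\min(r^2,\rho_k^2)^\ell$; on $[\rho_k^2, r^2]$ (nonempty only if $r \ge \rho_k$) the integrand is $\le \rho_k^d s^{\ell-1-d/2}$, and since $\ell > d/2$ this integrates to $\le C\rho_k^d r^{2\ell - d}$. A short case analysis on whether $r \le \rho_k$ or $r \ge \rho_k$ shows $I_k(r) \le C r^{2\ell}\big(\frac{r}{\rho_k}+1\big)^{-d}$. Plugging this in, taking square roots (using $\O_\si(\theta^2)^{1/2} = \O_{2\si}(\theta)$, and renaming $\si$ once more, or just noting the exponent only improves), and dividing by $r^\ell$ gives exactly the asserted bound.

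The main obstacle is bookkeeping the stochastic-integrability exponent $\si$ through the two operations that degrade it — squaring via \eqref{e.mult.Os} and averaging via Lemma~\ref{l.sum-O} (which needs $\si \ge 1$) — while still ending with $\si > 1$; this is why Proposition~\ref{p.spatav} was stated with an exponent strictly above $1$ and with an arbitrarily small loss $\de$, so there is enough room to halve and still stay above $1$ after adjusting $\de$. The only other point requiring care is the $k=0$, small-$s$ regime where Proposition~\ref{p.spatav} does not apply; there the deterministic estimates \eqref{e.deterministic.v} and \eqref{e.deterministic.nablav0} give a uniform bound on $|w_0|$ and hence on $|g_0\ast\Phi(s,\cdot)|$, so that part of the $s$-integral contributes a bounded deterministic term that is trivially absorbed into the stated right-hand side (which at $k=0$ is of order $C(r/\log 2 + 1)^{-d/2}$, a genuine bound since $\un H^{-\ell}$-norms of bounded functions on $B(0,r)$ are controlled).
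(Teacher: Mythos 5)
Your proposal follows the paper's proof essentially verbatim: bound the rescaled $H^{-\ell}$ norm via Lemma~\ref{l.sobolev.app}, feed in Proposition~\ref{p.spatav} (with the deterministic estimates \eqref{e.deterministic.v} and \eqref{e.deterministic.nablav0} handling the $k=0$, $s\le 1$ region), square via \eqref{e.mult.Os}, integrate via Lemma~\ref{l.sum-O}, compute the deterministic $s$-integral, and take a square root. The only difference from the paper is cosmetic — you split the $s$-integral at $s=\rho_k^2$, the paper cases on $r\lessgtr r_k'$ — and the conclusions agree.

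One small caveat on your exponent bookkeeping: the assertion that you can "arrange $\si = \si_0/2 > 1$" at the squared level is not what happens, since Proposition~\ref{p.spatav} only furnishes $\si_0>1$; what saves the day is exactly the fact you mention afterwards, namely that taking the square root at the end doubles the exponent back to $\si_0 > 1$, so the squared quantity only needs $\si_0/2 > \tfrac 1 2$. You should phrase it that way rather than claiming an exponent $>1$ in the middle of the argument (the paper itself writes $\O_{\si/2}$ at the squared stage and only lands at $\O_\si$ with $\si>1$ after the square root).
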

\begin{proof}
For convenience, we set $f := w_k - \fint_\Rd w_k$, and use the notation $r_k'$ introduced in \eqref{e.def.rk'}. We first consider the case $k \in \N \setminus \{0\}$. For $r \le r_k'$, by Proposition~\ref{p.spatav}, we have  for every $x \in \Rd$ that
\begin{align*}  
\int_0^{r^2} s^{\ell-1} |f \ast \Phi(s,\cdot)|^2 (x) \, \d s & = \O_{\si/2} \Ll( C 2^{-2k \Ll( 1 + \frac d 2 - \de \Rr) }\int_0^{r^2} s^{\ell-1}   \, ds\Rr) 
 = \O_{\si/2} \Ll(  C 2^{-2k \Ll( 1 + \frac d 2 - \de \Rr) } r^{2\ell} \Rr).
\end{align*}
For $r \ge r_k'$, we have instead that
\begin{align*}  
\int_0^{r^2} s^{\ell-1} |f \ast \Phi(s,\cdot)|^2 (x) \, \d s & = \O_{\si/2} \Ll( C 2^{-2k \Ll( 1 + \frac d 2 - \de \Rr) }\int_0^{r^2} s^{\ell-1}   \Ll( \frac{\sqrt{s}}{r_k'} +1 \Rr) ^{-d}  \, ds\Rr) \\
& = \O_{\si/2} \Ll(  C 2^{-2k \Ll( 1 + \frac d 2 - \de \Rr) } r^{2\ell} \Ll( \frac{r}{r_k'} \Rr) ^{-d} \Rr),
\end{align*}
where we used that $\ell > \frac d 2$ and $r \ge r_k'$ for the second equality. We then obtain the result by an application of Lemma~\ref{l.sobolev.app}. The case $k = 0$ is obtained in the same way, except that we treat the integral over $s \in [0,1]$ separately using the gradient estimate in \eqref{e.deterministic.nablav0}.
\end{proof}

We are now ready to complete the proof of Theorem~\ref{t.main}.
\begin{proof}[Proof of Theorem~\ref{t.main}]
Recall that we denote by $\chi \in C^\infty_c(\Rd)$ a smooth bump function of unit mass with compact support in the unit ball $B(0,1)$, and that we write $\chi_r := r^{-d} \chi(r^{-1} \, \cdot)$. By the definition of the rescaled $H^\ell$ norm in \eqref{e.def.Hell}, for every $\ell \in \N$ and $r \ge 1$, we have
\begin{equation*}  
\|\chi_r\|_{\un H^\ell(B(0,r))} = r^{-\ell} \|\chi\|_{\un H^\ell(B(0,1))}.
\end{equation*}
We select $\ell$ to be the smallest integer such that $\ell > \frac d 2$ (that is, $\ell = \Ll\lceil \frac d 2 + \frac 1 4 \Rr\rceil$), and write
\begin{equation*}  
\Ll|\int_{\Rd} w_k \chi_r - \fint_\Rd w_k \Rr| \le r^{-\ell}  \|\chi\|_{\un H^\ell(B(0,1))} \Ll\| w_k - \fint_\Rd w_k \Rr\|_{\un H^{-\ell}(B(0,r))} .
\end{equation*}
An application of Lemma~\ref{l.sobolev} thus yields, for each $\delta >0$, that there exists $\sigma(\de,d) > 1$ and a constant $C(\de,\|\chi\|_{\un H^\ell(\Rd)},\Lambda,d) < \infty$ such that for every $k \in \N$ and $r\ge 1$, 
\begin{equation*}  
\Ll|\int_{\Rd} w_k \chi_r - \fint_\Rd w_k \Rr| \le \O_\si \Ll( C 2^{-k \Ll( 1 + \frac d 2 - \de \Rr) } \, \Ll(\frac{r}{2^\frac k 2 \log(2+k)}+1\Rr)^{-\frac d 2}\Rr) .
\end{equation*}
We fix $\ep \in \Ll( 0, \frac{d-1}{2d} \Rr)$, $n \in \N$, and recall from \eqref{e.def.rk} the notation
\begin{equation*}  
r_k := 2^{n- \Ll( \frac 1 2 - \eps \Rr)k }.
\end{equation*}
Substituting $r$ with $r_k$ in the previous display, we obtain that
\begin{align*}  
\Ll|\int_{\Rd} w_k \chi_{r_k} - \fint_\Rd w_k \Rr| & 
\le \O_\si \Ll( C 2^{-k \Ll( 1 + \frac d 2 - \de \Rr) } \, \Ll(\frac{2^{n-k+\ep k}}{\log(2+k)}+1\Rr)^{-\frac d 2} \Rr) \\
& \le \O_\si \Ll( C 2^{-\frac{nd}{2}-k \Ll(1-\de + \frac{\ep d}{2}\Rr)} \, \log^\frac d 2(2+k)\Rr) .
\end{align*}
We select $\de := \frac{\ep d}{4} > 0$, 
so that
\begin{equation*}  
\sum_{k = 0}^n 2^k \Ll|\int_{\Rd} w_k \chi_{r_k} - \fint_\Rd w_k \Rr| \le \O_\si \Ll( C 2^{-\frac {nd}{2}} \Rr) .
\end{equation*}
In view of Lemma~\ref{l.bigO.vs.tail} and of the fact that $\si > 1$, this implies the existence of a constant $c(\ep,\|\chi\|_{\un H^\ell(\Rd)},\Lambda,d) < \infty$ such that for every $n \in \N$ and $t \ge 0$,
\begin{equation*}  
\P \Ll[ \sum_{k = 0}^n 2^k \Ll|\int_{\Rd} w_k \chi_{r_k} - \fint_\Rd w_k \Rr| \ge t 2^{-\frac{nd}{2}}\Rr] \le 2 \exp \Ll( -ct \Rr)  .
\end{equation*}
A similar but simpler argument shows that
\begin{equation*}  
\P \Ll[ \Ll|\int_{\Rd} (\xi\cdot \a\xi) \chi_{r_0} - \fint_\Rd \xi \cdot \a \xi \Rr| \ge t 2^{-\frac{nd}{2}}\Rr] \le 2 \exp \Ll( -ct^2 \Rr).
\end{equation*}
Combining these estimates with \eqref{e.multiscale.rewrite} and Proposition~\ref{p.remainder} completes the proof of Theorem~\ref{t.main}. 
\end{proof}
%
%
%
%
%
%

\section{Hierarchical hybrid grids}
\label{s.hhg}

In this section, we explain our strategy for the numerical approximation of solutions of elliptic equations. For definiteness, given a coefficient field $\a(x)$, and a domain $U \subset \Rd$, we consider the problem of computing an approximation of the solution $u \in H^1_0(U)$ of the equation
\begin{equation}  
\label{e.model.equation}
\Ll\{
\begin{aligned}
& -\nabla \cdot \a(\xi+\nabla u) = 0 & \text{ in } U,
\\
& u = 0 & \text{ on } \dr U.
\end{aligned}
\Rr.
\end{equation}
Since generalizations such as the addition of lower-order terms or non-zero boundary conditions pose no particular additional difficulty, we will not discuss these further.
 In the first subsection, we observe the necessity to opt for highly refined discretized approximations to the continuous equation. We then explain efficient ways to compute these highly refined approximations. Our approach is in line with the earlier work \cite{ber04,ber05} and based on hierarchical hybrid grids. That is, we start from an unstructured coarse mesh and refine it in a self-similar way a number of times; we then exploit this piecewise-structured hierarchical construction extensively at every step of the algorithm (assembly of the finite-element matrix, matrix-vector products, restriction and interpolation operators).

\subsection{Roughness of solutions}
\label{ss.rough}

In many practical instances, the heterogeneity of the coefficient field is due to the fact that the material of interest is a mixture of several different types of substances: see for instance the library of images at \cite{image-lib}. In view of this, we focus on the case of piecewise-constant coefficient fields.


\begin{figure}
\begin{center}
\includegraphics[scale=0.33]{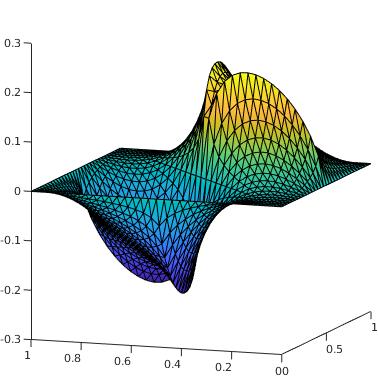}  \hspace{.1cm}
\includegraphics[scale=0.33]{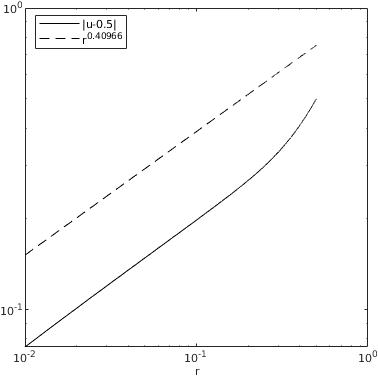} \hspace{.1cm}
\includegraphics[scale=0.33]{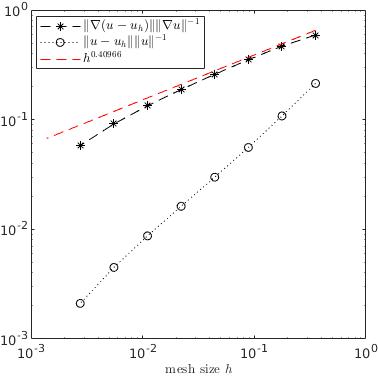}
\end{center}
\caption{
Left: the solution $u$ of \eqref{e.eq.4squares}-\eqref{e.coefs.4squares} minus the affine function $x \mapsto x_1 + x_2$, with initial coarse mesh refined five times. Middle: the solution along the line $x_1 = x_2$ for $x_1 > 0$, compared with the function $r^{0.40966}$, on a logarithmic scale. Right: the (approximate) relative error, in $H^1$ and in $L^2$ respectively, for the problem in \eqref{e.eq.4squares}-\eqref{e.coefs.4squares}. Successive dots on a given line correpond to successive refinements of the triangular mesh, starting from a coarse mesh of $8$ triangles. }
\label{f.2x2error_singular}
\end{figure}

\smallskip

In this case, the discontinuities of the coefficient field compound the difficulties inherent to solving equations with rapidly oscillating coefficients. In order to measure the extent of these difficulties, consider the problem of approximating the solution $u \in H^1([-1,1]^2)$ to 
\begin{equation}  
\label{e.eq.4squares}
\Ll\{
\begin{aligned}
& -\nabla \cdot \a(x) \nabla u = 0  & \text{in } \ [-1,1]^2,
\\
& u(x) = x_1 + x_2 & \quad \text{on } \dr \Ll( [-1,1]^2 \Rr) ,
\end{aligned}
\Rr.
\end{equation}
where the coefficient field $\a(x)$ is given by
\begin{equation}  
\label{e.coefs.4squares}
\a(x) = 
\Ll|
\begin{array}{ll}  
9 \, \mathrm{Id} & \quad \text{if } x \in [-1,0]^2 \cup [0,1]^2, \\
\mathrm{Id} & \quad \text{otherwise}. \\
\end{array}
\Rr.
\end{equation}
We start from a coarse mesh made of 8 triangles of equal sizes (two triangles in each of the translates of $[0,1]^2$). We then refine a given mesh by subdividing each triangle into 4 smaller triangles, adding a new vertex at the midpoint of each edge. We consider multiple iterations of this refinement procedure, and for each level of refinement, we compute the associated finite-element solution, using piecewise affine elements. 
The approximation of the solution minus the affine function $x = (x_1,x_2) \mapsto x_1 + x_2$ after five levels of refinement is represented on the left frame of Figure~\ref{f.2x2error_singular}. The rough behavior of the solution near the origin is clearly visible. We also display the value of the solution along the line $x_1 = x_2$ on the middle frame of Figure~\ref{f.2x2error_singular}---see below around \eqref{e.exponent} for the prediction of the exponent $0.4\ldots$ appearing there. The right frame of Figure~\ref{f.2x2error_singular} displays the relative error, measured in $H^1$ and in $L^2$ respectively, compared with the true solution. In order for the relative error to be below $10\%$ in the~$H^1$ norm, it is necessary to use at least six levels of refinement. 
At six levels of refinement, the linear system that needs to be solved already involves $2^{15}$ unknowns.

\smallskip

We can understand the roughness of the solution theoretically in a precise way. We consider more generally the situation when the coefficient field is given by
\begin{equation}  
\label{e.coefs.4squares.Lambda}
\a(x) = 
\Ll|
\begin{array}{ll}  
\Lambda \, \mathrm{Id} & \quad \text{if } x \in [-1,0]^2 \cup [0,1]^2, \\
\mathrm{Id} & \quad \text{otherwise}, \\
\end{array}
\Rr.
\end{equation}
for some $\Lambda \in [1,\infty)$. 
A blow-up analysis near the origin suggests to look for solutions in the unit ball $B(0,1)$ of the form $r^\alpha f(\theta)$, where $r \ge 0$ and $\theta \in [0,2\pi)$ are the standard polar coordinates: $x_1 = r \cos \theta$ and $x_2 = r \sin \theta$. Denoting
\begin{equation*}  
a(\theta) := 
\Ll|
\begin{array}{ll}  
\Lambda & \quad \text{if } \theta \in \Ll[ 0,\tfrac \pi 2 \Rr] \cup \Ll[ \pi, \tfrac {3\pi}{2} \Rr] , \\
1 & \quad \text{otherwise}, \\
\end{array}
\Rr.
\end{equation*}
we find that the smallest exponent $\alpha > 0$ such that $r^\alpha f(\theta)$ is a solution in $B(0,1)$ for some function $f$ is given by
\begin{equation*}  
\alpha^2 = \inf \Ll\{ \frac{\int_0^{2\pi} (f')^2 a}{\int_0^{2\pi} f^2 a} \ : \ f \in H^1_{\mathrm{per}}(\Ll[0,2\pi\Rr]) \ \text{ s.t. } \int_0^{2\pi}  f a = 0\Rr\}.
\end{equation*}
Moreover, $r^\alpha f(\theta)$ is indeed a solution of the equation in $B(0,1)$ when $f$ is the unique minimizer of the variational problem above. The value of this exponent was computed in \cite{pic72}: it is
\begin{equation}  
\label{e.exponent}
{\alpha} = \frac{4}{\pi} \arctan \Ll( \frac{1}{\sqrt{\Lambda}} \Rr) .
\end{equation}
Notice that the function $r^\alpha f(\theta)$ belongs to $H^{1+\alpha-\ep}(B(0,1))$ for every $\ep > 0$, but does not belong to $H^{1+\alpha}(B(0,1))$. We therefore expect a finite-element scheme with elements of size $h$ to provide an approximation in $H^1$ at a precision of the order of~$h^\alpha$ (and at precision of the order of $h^{1+\alpha}$ in $L^2$). The particular case we investigated numerically corresponds to $\Lambda = 9$, which gives
\begin{equation}  
\label{e.exponent.1.9}
\alpha = \frac{4}{\pi} \arctan \Ll( \frac{1}{3} \Rr) \simeq 0.4096655294\ldots
\end{equation}
In fact, it was shown in \cite{pic72} that the exponent in \eqref{e.exponent} is the smallest possible exponent for H\"older regularity one can get if one allows for abritrary coefficient fields which are everywhere a multiple of the identity and satisfy the ellipticity condition
\begin{equation}  
\label{e.ellipt}
\mathrm{Id} \le \a(x) \le \Lambda \, \mathrm{Id}.  
\end{equation}
In this sense, coefficient fields that are piecewise constant on a checkerboard structure are \emph{worst possible} from the point of view of regularity (and therefore of difficulty of numerical approximation).	
For general coefficient fields satisfying \eqref{e.ellipt} but not necessarily being a multiple of the identity matrix at each point, it was shown in~\cite{pic72} that the smallest possible exponent for H\"older regularity is $\alpha = \Lambda^{-\frac 1 2}$. An explicit coefficient field satisfying \eqref{e.ellipt} and admitting a solution of the form $r^{\Lambda^{-\frac 1 2}} f(\theta)$ was first given in \cite{meyers}. This exponent governs the rate of convergence of the finite-element approximation as the mesh is successively refined: for instance, for an ellipticity contrast of $\Lambda = 100$, we cannot hope for an asymptotic convergence rate better than $h^{0.1}$ in general, and no better than $h^{0.127\ldots}$ in the case of the coefficient field in \eqref{e.coefs.4squares.Lambda}. 
The situation is even worse in dimension $d = 3$, at least from a theoretical point of view. Indeed, to the best of our knowledge, it is an open question to show that when $d = 3$ (or for any $d \ge 3$), the regularity exponent can be bounded from below by a negative power of the ellipticity contrast. 

\subsection{Number of unknowns}

We are ultimately interested in solving elliptic equations with \emph{random} coefficients. In order to calculate the homogenized matrix, we will need to average over large domains, so as to tame the fluctuations of the coefficient field. As a toy example, consider the problem of calculating the standard average of the coefficient field, denoted by $\fint_\Rd \a$ above, see \eqref{e.def.fintR}. By the scaling of the central limit theorem, in order to measure this quantity within a precision $\delta > 0$, we need to average over at least $C \de^{-2}$ unit cells. Similarly, as was shown in \cite[Proposition~1.1]{efficient}, it is impossible to compute an approximation of the homogenized matrix at precision $\delta$ if one observes only $o(\de^{-2})$ unit cells (the statement in \cite{efficient} is written for finite-difference equations, but the proof applies essentially verbatim to the continuous setting).

\smallskip

Roughly speaking, if we want to compute $\ahom$ within a precision of, say, $10\%$, we are bound to have to examine at least of the order of $10^2$ unit cells. In two dimensions, if the mesh we use is refined six times as described in the previous subsection, this means that we must be facing problems involving of the order of $2^{15} \cdot 10^2 \simeq 3 \cdot 10^6$ unknowns. Notice that each further refinement of the mesh multiplies this number by $4$, and that reducing the size of the fluctuations by a factor of $2$ also multiplies this number by $4$. Finally, this rough estimation hides multipicative constants that may be large. (On the other hand, the random coefficient fields we investigate numerically in the next section are not made of a systematic periodic repetition of the worst-case coefficient field in~\eqref{e.coefs.4squares}, and this will mitigate the difficulty somewhat.)

\subsection{Motivations for hybrid methods}

The upshot of the previous subsections is that we ought to be able to solve for elliptic problems with many degrees of freedom. As is well-known, the numerical approximation of elliptic equations in domains with simple geometry and with constant coefficients can be performed very efficiently using a variety of techniques, including the geometric multigrid method (see \cite{FFT} for several benchmarks). Indeed, for equations with constant coefficients, stencil-based operations can replace the need to assemble and store the finite-element matrix. Moreover, the data can be organized locally in agreement with the underlying geometry and accessed in a consistent way, resulting in few integer operations and highly optimized usage of the processor cache memory. 

\smallskip

For more complex geometries or varying coefficients, completely unstructured approaches can be used instead. In this case, the problem of storing the finite-element matrix in memory becomes a major limitation. Moreover, data access becomes highly unpredictable and requires more integer operations, two factors that cause a dramatic drop in performance \cite{ber04,ber05}. 

\smallskip

Following \cite{ber04,ber05}, we seek to remedy this problem by using a hybrid approach. The idea, called \emph{Hierarchical hybrid grids} in \cite{ber04,ber05}, is to proceed as in the completely unstructured case on the coarse mesh, but then rely on structured techniques within each constant-coefficient patch. This approach has multiple advantages. Firstly, we only need to assemble and store the finite-element matrix associated with the coarsest mesh. Similarly, we do not need to store the full computational grid in memory. This results in large gains in memory usage, which is otherwise the main limiting factor on the computing architectures we use.  Moreover, we store a vector of the finite-element space in a bi-dimensional array indexed by the identity of the coarse element and then the position within it. This allows to obtain efficiency gains similar to those observed in the completely structured case, in particular regarding fast matrix-vector multiplications, and restriction and interpolation operators in the multigrid method. 

\subsection{Hierarchical hybrid grids}
\label{ss.hhg}

We now explain how to implement this approach more precisely. We also refer to \cite{tutorial} for a more thorough discussion, as well as \cite{ber04,ber05}.

\smallskip

We start with some definitions. We say that $\mcl T = \{K_1,\ldots, K_n\}$  is a simplicial partition of the set $U \subset \Rd$ if the following three conditions hold: (1) for every $i \in \{1,\ldots,n\}$, the set $K_i$ is a simplex in $\Rd$ (that is, the convex envelope of a set of $(d+1)$ points---a triangle in dimension $d = 2$ and a tetrahedron in dimension $d = 3$); (2) for every $i \neq j$, the interiors of $K_i$ and $K_j$ are disjoint; (3) the union $\bigcup_{i = 1}^n K_i$ is the closure of the set $U$. For convenience, we often drop the word ``simplicial'' and simply say ``partition'' instead of ``simplicial partition''. A partition can be represented as a list of nodes nodes $\{ \n_i \}_{1 \le i \le N} \subset \mathbb{R}^d$ and a list of $(d+1)$-tuples of indices that define the identity of the corner points of every simplex in the partition.  We say that two partitions $\mathcal{T}_1$ and $\mathcal{T}_2$ are nested, and write $\mathcal{T}_1 \sqsubseteq \mathcal{T}_2$, if for every $K \in \mathcal{T}_1$, there exists $T \in \mathcal{T}_2$ such that $K \subset T$.

\smallskip

We denote by $\hat K$ the standard simplex, that is, the convex envelope of the nodes $\e_0, \ldots, \e_d$, where $(\e_1, \ldots, \e_d)$ is the canonical basis of $\Rd$, and $\e_0$ is the null vector. Let $\hat{\mcl T}$ be a partition of $\hat K$, and $\mcl T_H$ be a partition of an arbitrary domain. We say that a partition $\mcl T_h$ is the \emph{locally uniform partition} associated with $(\mcl T_H, \hat{\mcl T})$, and write $\mcl T_h = \msf{lup}(\mcl T_H, \hat{\mcl T})$, if $\mcl T_h \sqsubseteq \mcl T_H$ and, for every $K \in \mcl T_H$, there exists an affine mapping $F_K : \Rd \to \Rd$ such that the image of $\hat{\mcl T}$ under $F_K$ is
$\{T \in \mcl T_h \ : \ T\subset K\}$. See Figure~\ref{f.implicit} for an illustration. Notice that the mapping $F_K$ appearing above must be such that $F_K(\hat K) = K$. Such an affine mapping is entirely specified by prescribing which nodes of $\hat K$ are sent to which nodes of $K$. 

\smallskip

Note that the locally uniform partition $\mcl T_h$ is completely specified by the knowledge of $(\mcl T_H, \hat{\mcl T})$. This allows for vast memory gains for storing the partition, since only the reference simplex $\hat K$ is meshed finely, while the global partition $\mcl T_H$ remains coarse. In addition, as discussed below, this format will be very convenient for a variety of operations, including for implementing the restriction and interpolation operators in the multigrid method.

\begin{figure}
\begin{center}
\includegraphics[scale=1]{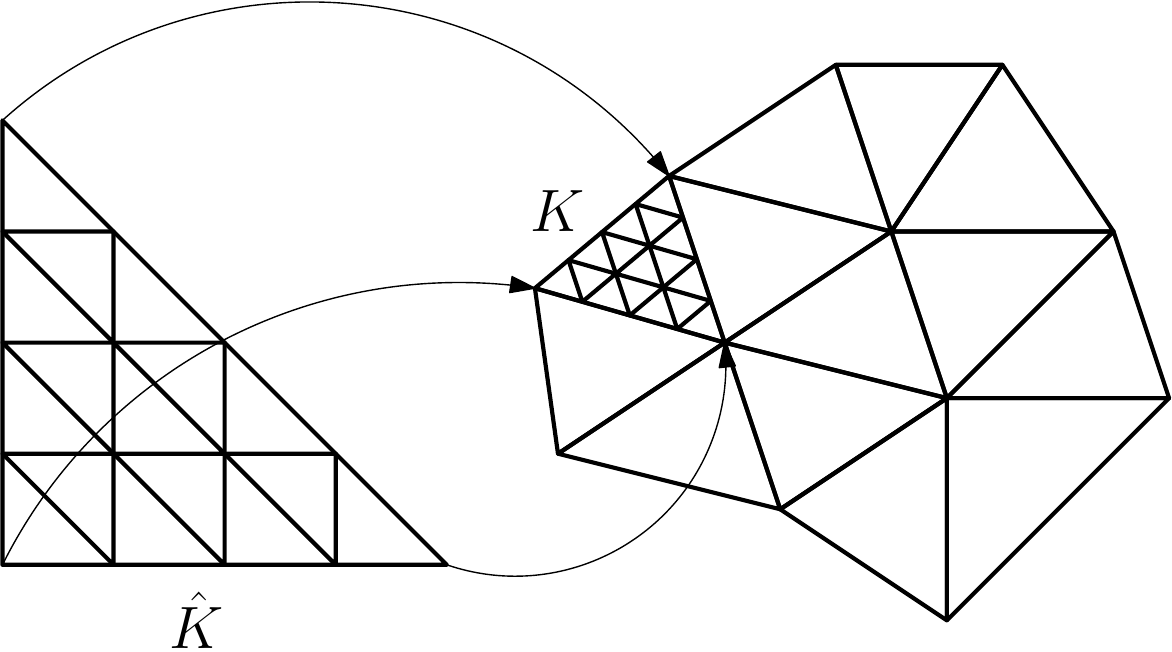} 
\end{center}
\caption{Left: the standard simplex $\hat K$ has been refined twice. Right: an unstructured coarse mesh, and the image of the twice-refined standard simplex through the affine mapping~$F_K$ for one particular coarse element $K$.}
\label{f.implicit}
\end{figure}

\subsection{Assembly of the finite-element matrix}

We proceed to define the finite-element matrix, and then describe how to store it efficiently using the structure of locally uniform partitions, under the assumption that the coefficient field is constant on each coarse element.

\smallskip

Let $\mcl T$ be a partition of the domain $U \subset \Rd$. We think of this partition as being relatively coarse, having a level of detail just sufficient to resolve the variations of the coefficient field. For clarity of exposition, we start by considering the case in which this coarse partition is not refined further. Denote by $\{ \n_i \}_{1 \le i \le N} \subset \mathbb{R}^d$ the nodes of the partition $\mcl T$. We look for an approximation of the solution of \eqref{e.model.equation} in the finite-dimensional space $V(\mcl T) \cap H^1_0(U)$, where
\begin{equation}  
\label{e.def.VT}
V(\mcl T) := \Ll\{ u \in H^1(U) \ : \ u_{\vert K} \mbox{ is affine for every $K \in \mcl T$} \Rr\} .
\end{equation}
A standard basis for $V(\mcl T)$ is formed by the nodal functions $\{ \varphi_i \}_{1 \le i \le N} \subset V(\mcl T)$, which are specified by the condition
\begin{equation*}  
\varphi_i(\n_j) = \1_{i = j}, \quad \text{for every } i, j \in \{1,\ldots, N\}.
\end{equation*}
Denoting by $\mbf x$ the vector encoding the finite-element approximation of \eqref{e.model.equation} in the basis formed by 
\begin{equation*}  
\{\varphi_i \ : \ \n_i \text{ is an interior point of } U\},
\end{equation*}
we identify $\mbf x$ as the solution of the problem
\begin{equation*}  
A \mbf x = \mbf b, \quad \mbox{where} \quad A_{ij} = \int_{U} \nabla \varphi_i \cdot \mathbf{a} \nabla \varphi_j   \quad \mbox{and} \quad \mbf  b_i =  \int_{U} \nabla \varphi_i \cdot \mathbf{a} \xi.
\end{equation*}
Notice that the size of the vectors and of the symmetric matrix appearing above is the number of nodes in the interior of $U$; this is how the null Dirichlet boundary condition is enforced. 

\smallskip

For each $K \in \mcl T$, denote by $\{\n_i^K\}_{0 \le i \le d} \subset \Rd$ the extremal points of the simplex $K$. This defines a mapping $\sigma : \mcl T \times \{0,\ldots, d\} \to \{1,\ldots N\}$, which to each $(K,i)$ associates the node number of the node $\n_i^K$ in the global ordering $\{\n_j\}_{1 \le j \le N}$. Denote by $\varphi^K_i$ the restriction to $K$ of the basis function $\varphi_{\sigma(K,i)}$. The functions $(\varphi_i^K)$ are called \emph{local shape functions}. The contribution of the element $K \in \mcl T$ to the entries of the matrix~$A$ can be represented by the matrix $A^{(K)} \in \R^{(d+1)\times (d+1)}$ such that, for every $i,j \in \{0,\ldots d\}$,
\begin{equation}  
\label{e.def.AK}
A_{ij}^{(K)} = \int_K \nabla \varphi_i^K \cdot \mathbf{a} \nabla \varphi_j^K. 
\end{equation}
The global matrix $A$ can then be reconstructed by the identity
\begin{equation}  
\label{e.rep.A}
A = \sum_{K \in \mcl T} R_K^T A^{(K)} R_K,
\end{equation}
where $R_K \in \R^{N\times (d+1)}$ is the canonical matrix representing the linear mapping
\begin{equation*}  
\Ll\{
\begin{array}{rcl}  
\R^N & \to & \R^{d+1} \\
\mbf x & \mapsto & \sum_{i = 0}^d \mbf x_{\sigma(K,i)} \e_{i+1}.
\end{array}
\Rr.
\end{equation*}
We denote the local shape functions associated with the standard simplex by $\hat \varphi_i := \varphi_i^{\hat K}$, for every $i \in \{0,\ldots d\}$, and call them \emph{reference shape functions}. In dimension $d = 2$, these reference shape functions are $x \mapsto 1-x_1-x_2$, $x \mapsto x_1$, and $x \mapsto x_2$. We also denote by $F_K : x \mapsto B_K x + v_K$ the unique affine mapping that sends the nodes $\e_0, \ldots, \e_d$ of the standard simplex to the nodes $\n_0^K,\ldots,\n_d^K$, so that in particular, $F_K(\hat K) = K$ (see Figure~\ref{f.implicit}). 
Notice that, for every $x \in K$,
\begin{equation}  
\label{e.derivatives}
\varphi_i^K(F_K(x)) = \hat \varphi_i(x), \quad \text{so that} \quad B_K^T(\nabla \varphi_i^K)(F_K(x)) = \nabla \hat \varphi_i(x).
\end{equation}
By this change of variables, for every $i,j \in \{0,\ldots d\}$, we can rewrite the integral on the right side of \eqref{e.def.AK} as
\begin{equation}  
\label{e.AK2}
A_{ij}^{(K)} = |\det B_K|\int_{\hat K} B_K^{-T} \nabla \hat \varphi_i\cdot \a B_K^{-T} \nabla \hat \varphi_j.
\end{equation}
In the case when the partition $\mcl T$ is sufficiently fine that $\a(x)$ is constant equal to~$\a^{(K)}$ when $x$ varies in $K$, we set
\begin{equation}  
\label{e.cK}
\mbf c^{(K)} := |\det B_K| \, B_K^{-1} \a^{(K)} B_K^{-T} \in \R^{d\times d},
\end{equation}
and the previous display becomes
\begin{equation}  
\label{e.AK3}
A_{ij}^{(K)} = \int_{\hat K} \nabla \hat \varphi_i\cdot \mbf c^{(K)} \nabla \hat \varphi_j.
\end{equation} 
We can expand this expression into
\begin{equation}  
\label{e.AK.as.Apq}
A^{(K)} = \sum_{p,q =1}^d \mbf c_{p,q}^{(K)} \hat A^{pq}, 
\end{equation}
where, for each $p,q \in \{1,\ldots d\}$, the matrix $\hat A^{pq} \in \R^{(d+1)\times (d+1)}$ is such that, for every $i,j \in \{0,\ldots, d\}$,
\begin{equation}  
\label{e.def.Apq}
\hat A^{pq}_{ij} := \int_{\hat K} \dr_{x_p} \hat \varphi_i \, \dr_{x_q} \hat \varphi_j.
\end{equation}
Notice that, using \eqref{e.def.AK} and \eqref{e.AK.as.Apq}, we can compute the finite-element matrix $A$ from the knowledge of $\{\mbf c^{(K)}\}_{K \in \mcl T}$ and $\{\hat A^{pq}\}_{1 \le p,q \le d}$.

\smallskip

We now generalize these observations to the case when the partition $\mcl T$ is locally uniform, say $\mcl T = \msf{lup}(\mcl T_H, \hat{\mcl T})$. We keep writing $\{\n_i\}_{1 \le i \le N}$ for the nodes of the fine partition $\mcl T$, and we denote by $\{ \hat{\n}_i \}_{1 \le i \le \hat N} \subset \hat K$ the nodes of the partition $\hat{\mcl T}$ of the standard simplex. For each $K \in \mcl T_H$, the fine partition $\mcl T$ induces a fine partition of $K$ by restriction; this partition is in fact the image of $\hat{\mcl T}$ under the mapping~$F_K$ appearing in the definition of local uniform partition. Hence, the nodes of this partition are $\n^K_i := F_K(\hat{\n}_i)$, where $i$ ranges in $\{1,\ldots, \hat N\}$. This naturally induces a mapping $\sigma : \mcl T_H \times \{1,\ldots \hat N\} \to \{1,\ldots N\}$ wich, to each $(K,i)$, associates the index of the node~$\n_i^K$ in the numbering provided by $\{\n_i\}_{1 \le i \le N}$. The mapping $\sigma$ is clearly surjective, but it is not a bijection: indeed, the nodes that belong to the boundary of multiple coarse elements are represented multiple times. On the other hand, every node that belongs to the interior of a simplex of the coarse partition has a unique representation in the form $\n_i^K$ for some $(K,i) \in \mcl T_H \times \{1,\ldots, \hat N\}$. As the partition $\hat {\mcl T}$ becomes finer and finer, the approximation $N \simeq |\mcl T_H| \, \hat N$ therefore becomes more and more accurate. (The notation $|\mcl T_H|$ stands for the number of elements in $\mcl T_H$.)

\smallskip

For each $K \in \mcl T_H$ and $i \in \{1,\ldots, \hat N\}$, we denote by $\varphi^K_i$ the restriction to $K$ of the basis function $\varphi_{\sigma(K,i)}$. The contribution of the coarse element $K \in \mcl T_H$ to the finite-element matrix can represented by the $\hat N$-by-$\hat N$ matrix $A^{(K)}$ such that \eqref{e.def.AK} holds for every $i,j \in \{0,\ldots,\hat N\}$. The relation \eqref{e.rep.A} still holds, where now $R_K \in \R^{N\times \hat N}$ is the canonical matrix representing the linear mapping
\begin{equation}  
\label{e.lin.map}
\Ll\{
\begin{array}{rcl}  
\R^N & \to & \R^{\hat N} \\
\mbf x & \mapsto & \sum_{i = 1}^{\hat N} \mbf x_{\sigma(K,i)} \e_{i}.
\end{array}
\Rr.
\end{equation}
For every $i \in \{1,\ldots, \hat N\}$, the reference shape function is defined by setting $\hat \varphi_i := \varphi_i^{\hat K}$. The identities \eqref{e.derivatives} to \eqref{e.def.Apq} still hold, the only difference being that $K$ now ranges in $\mcl T_H$ and the indices $i$ and $j$ now range in $\{1,\ldots,\hat N\}$.

\smallskip

It thus follows that the finite-element matrix associated with the locally uniform partition $\mcl T$ can be represented by storing only the set of $d$-by-$d$ matrices $\{\mbf c^{(K)}\}_{K \in \mcl T_H}$ and the set of $\hat N$-by-$\hat N$ matrices $\{\hat A^{pq}\}_{1 \le p,q\le d}$. Moreover, these matrices can be constructed directly in a straightfoward manner, without having to construct the fine partition $\mcl T$. Finally, in the practical cases we have in mind, the matrices $\hat A^{pq}$ are highly regular and have only of the order of $C\hat N$ non-zero entries. The amount of memory required to store this data is proportional to
\begin{equation*}  
d^2 \Ll( |\mcl T_H| + C \hat N \Rr).
\end{equation*}
If we were to ignore the locally uniform structure of the fine partition $\mcl T$, the cost of storing its finite-element matrix would be proportional to $N$ instead. Recalling that $N \simeq |\mcl T_H| \hat N$, we see that the semi-structured approach results indeed in a significant gain in memory usage. 

\subsection{Matrix-vector product}

Pursuing with the setting of the previous section, we now discuss how to store vectors and perform matrix-vector operations with the finite-element matrix, which we recall is represented in memory by the matrices $\{\mbf c^{(K)}\}_{K \in \mcl T_H}$ and $\{\hat A^{pq}\}_{1 \le p,q\le d}$. As discussed above, 
\begin{equation}
\label{e.def.Ax}
A = \sum_{K \in \mcl T_H} \sum_{p,q = 1}^d \mbf c_{pq}^{(K)} R_K^T \hat A^{pq} R_K,
\end{equation}
where $R_K$ is the matrix representing the linear mapping in \eqref{e.lin.map}. In view of \eqref{e.def.Ax}, instead of representing finite-element vectors as $N$-dimensional vectors, we encode them in an $\hat N$-by-$|\mcl T_H|$ array. That is, we represent each $\mbf x \in \R^N$ by an array $X$ such that the columns of $X$, denoted by $\{X(:,j)\}_{1 \le j \le |\mcl T_H|} \subset \R^{\hat N}$, are equal to the vectors $\{R_{K_j} \mbf x\}_{1 \le j \le |\mcl T_H|}$. Here we used the notation $\{K_j\}_{1 \le j\le |\mcl T_H|}$ to denote an enumeration of the (unstructured) set $\mcl T_H$. Naturally, the entries that are associated with nodes that belong to multiple coarse elements are repeated in this representation; this parallels the observation that the mapping $\sigma$ defined in the previous subsection is surjective but not bijective. 

\smallskip

The operation of $A$ onto a vector can then be evaluated in two steps: first, we compute the $\hat N$-by-$\hat N$ matrix $Y$ defined, for every $j \in \{1,\ldots,|\mcl T_H|\}$, by
\begin{equation*}  
Y(:,j) = \sum_{p,q = 1}^d \mbf c_{pq}^{(K_j)} \hat A^{pq} X(:,j).
\end{equation*}
The column $Y(:,j)$ is however not equal to the desired outcome of $R_{K_j} A \mbf x$, due to the presence of nodes that belong to multiple coarse elements. In the second step, we compute
\begin{equation}  
\label{e.matrix.vector}
(AX)(:,j) = \sum_{K_\ell \in \mcl T} R_{K_j} R_{K_\ell}^T Y(:,\ell). 
\end{equation}
In the actual implementation of this second step, we \emph{do not} need to construct the matrices $R_{K_j}$ explicitly. Instead, we implement this formula by identifying the nodes that are found at the interface between two or more elements of the coarse partition. In order to do so, we distinguish between different types of interfaces, according to whether they are to be found on faces, edges, or point vertices. (Naturally, face-type interfaces are only relevant in dimension $d = 3$.) For a more precise description of this aspect, we refer to \cite{ber04,ber05,tutorial}. 

\subsection{Multigrid method}

The geometric multigrid method is a technique for the numerical approximation of elliptic problems \cite{brenner}. It uses a sequence of nested partitions $\mcl T_n \sqsubseteq \cdots \sqsubseteq \mcl T_0$, as well as restriction and interpolation operators which allow to transfer a function defined on a given grid to a function defined on a coarser and finer grids respectively.

\smallskip

The setting of locally uniform partitions is particularly conducive to efficient implementations of the geometric multigrid method. Indeed, we first give ourselves a sequence of nested partitions of the reference element $\hat {\mcl T}_n \sqsubseteq \cdots \sqsubseteq \hat{\mcl T}_0$. 
These nested partitions are constructed as follows: we fix $\hat{\mcl T}_0 := \{\hat K\}$ to be the trivial partition, and then inductively construct $\hat{\mcl T}_{k+1}$ from $\hat{\mcl T}_k$ by adding new nodes at the middle of the edge of each element of $\hat{\mcl T}_k$, and, in dimension $d = 2$, by replacing each triangle with a partition of this triangle made of $4$ triangles, or in dimension $d = 3$, by replacing each tetrahedron with a partition of this tetrahedron made of $8$ tetrahedra~\cite{bey}. The nested partitions we use to implement the geometric multigrid method are then
\begin{equation*}  
\mcl T_n = \msf{lup}(\mcl T_H, \hat{\mcl T}_n) \sqsubseteq \cdots \sqsubseteq \mcl T_1 = \msf{lup}(\mcl T_H, \hat{\mcl T}_1) \sqsubseteq \mcl T_0 = \msf{lup}(\mcl T_H, \hat{\mcl T}_0) = \mcl T_H.
\end{equation*}
Recall that we denote by $V(\mcl T)$ the finite-element space associated with the partition~$\mcl T$, see \eqref{e.def.VT}. We start by defining interpolation and restriction operators associated with the nested partitions of the standard simplex. For each $k < n$, we define the interpolation operator $\hat {\mcl I}_k : V(\hat{\mcl T}_k) \to  V(\hat{\mcl T}_{k+1})$ to be the canonical injection. The restriction operator can then be taken as the transpose of the interpolation operator, up to a normalization constant (see \cite[Definition~6.3.1]{brenner} for more precision). Similarly, we define the interpolation operator $\mcl I_k : V(\mcl T_k) \to V(\mcl T_{k+1})$ to be the canonical injection. Recall that we represent a given vector $\mbf x_k \in V(\mcl T_k)$ as an $\hat N_k$-by-$|\mcl T_H|$ matrix~$X_k$ such that $X_k(:,j) = R^{(k)}_{K_j} \mbf x_k \in \R^{\hat N_k}$ where $\hat N_k$ is the number of vertices of the partition $\hat {\mcl T}_k$ of the standard simplex, and we wrote $R^{(k)}_{K_j}$ instead of $R_{K_j}$ to emphasize the dependency on $k$ of this operator. In this representation, we can evaluate the interpolation operator very simply by setting, for every $j \in \{1,\ldots,|\mcl T_H|\}$, 
\begin{equation*}  
(\mcl I_k X_k)(:,j) = \hat {\mcl I}_k \Ll(X_k(:,j)\Rr).
\end{equation*}
Up to a normalization constant, we wish to use the transpose of $\mcl I_k$ as our restriction operator. In view of the format in which we store elements of $V(\mcl T_{k+1})$, this is not absolutely straightfoward to compute, since it involves some amount of communication between vertices belonging to different elements of the coarse partition. We now explain how to perform this computation efficiently by reducing it to the same calculation as that arising in matrix-vector multiplication, see \eqref{e.matrix.vector}. Given a load vector $\b$ and an element $\mbf x_{k+1} \in V(\mcl T_{k+1})$, we aim to compute the residual
\begin{equation}\  
\label{e.residual.eq}
\mbf r_{k} := \mcl I_k^T \Ll( \mbf b - A \mbf x_{k+1}  \Rr) .
\end{equation}
We use the same data format to store the load vector, that is, we represent it by a family $(\mbf b^{(k+1,K)})_{K \in \mcl T_H}$ of vectors of size $\hat N_{k+1}$ such that 
\begin{equation*}  
\mbf b = \sum_{K \in \mcl T_H} \Ll(R^{(k+1)}_K\Rr)^T \, \mbf b^{(k+1,K)}.
\end{equation*}
For the model problem \eqref{e.model.equation}, this means that we set, for every $i \in \{1,\ldots, \hat N_{k+1}\}$,
\begin{equation*}  
\b^{(k+1,K)}_i := \int_K \nabla \varphi_i^{(k+1,K)} \cdot \a \xi,
\end{equation*}
where again we wrote $\varphi_i^{(k+1,K)}$ instead of $\varphi_i^K$ to make the depency on $k$ more explicit. Recall that the vector $\mbf x_{k+1}$ in \eqref{e.residual.eq} is stored in memory as an array whose columns are given by $R^{(k+1)}_K \mbf x_{k+1}$. Using also \eqref{e.rep.A}, we obtain that 
\begin{equation*}  
\mbf r_k = \sum_{K \in \mcl T_H} \mcl I_k^T \Ll(R^{(k+1)}_K\Rr)^T \, \mbf b^{(k+1,K)} - \sum_{K \in \mcl T_H} \mcl I_k^T \Ll(R^{(k+1)}_K\Rr)^T A^{(k+1,K)} R_K^{(k+1)} \mbf x_{k+1}.
\end{equation*}
Moreover, one can verify that
\begin{equation*}  
R_K^{(k+1)} \mcl I_k = \hat{\mcl I}_{k+1} R_K^{(k)}.
\end{equation*}
We thus conclude that
\begin{equation*}  
\mbf r_k = \sum_{K \in \mcl T_H} \Ll(R_K^{(k)}\Rr)^T \mbf r_k^{(K)}, \quad \text{with} \quad \mbf r_k^{(K)} := \Ll(\hat {\mcl I}_{k+1}\Rr)^T \Ll( \mbf b^{(k+1,K)}- A^{(k+1,K)}R^{(k+1)}_K \mbf x_{k+1} \Rr) .
\end{equation*}
Each term $\mbf r_k^{(K)}$ is relatively easy to compute, since $\hat {\mcl I}_{k+1}$ is an operator of moderate dimension. We have now reached a situation analogous to that in the previous section: the remaining problem is that it is not true in general that $\mbf r_k^{(K)} = R_K^{(k)} \mbf r_k$. This can be arranged by proceeding as in~\eqref{e.matrix.vector}.

\smallskip

For the smoothing steps in the multigrid method, we use a few steps of conjugate gradient descent. Finally, we use a direct solver for the coarse-grid problem. In our numerical experiments, the above-described implementation of the geometric multigrid method showed robust convergence behavior.

%
%
%
%
%
%

\section{Numerical tests}
\label{s.numerics}

In this section, we report on numerical results for the method presented in this paper. The code was written in the Julia language, and is available at this address:
\begin{equation}
\label{e.github}
\mbox{\small{\url{https://github.com/haampie/Homogenization.jl}}}
\end{equation}

In all the examples we consider, the coefficient field is $\Zd$-stationary, where $d \in \{2,3\}$ is the dimension. For convenience, we replace averages against the mask $\chi_{r_k}$ in \eqref{e.def.hatsigma} and \eqref{e.main} by averages over the cube $(-r_k,r_k)^d$. While strictly speaking, this situation is not covered by Theorem~\ref{t.main}, it is not difficult to show that the statement is still correct in this case (in fact, the argument is then somewhat simpler). For simplicity, we also fix $\ep = 0$ in \eqref{e.def.rk}. As discussed below \eqref{e.def.tdvk}, it is not difficult to modify the proof and cover this case as well, at the cost of an arbitrarily small loss of exponent in \eqref{e.main}. We also slightly modify the definition of the approximations $\td v_k$ in \eqref{e.def.tdvk}, by using a square or a cube instead of a ball for the domain: that is, for every $k \in \{0,\ldots,n\}$, we set 
\begin{equation*}  
L(k,n) := 2^{n-\frac k 2} + C_{\mathrm{bl}}(1+n) 2^{\frac k 2},
\end{equation*}
and solve for $\td v_k \in H^1_0((-L(k,n),L(k,n))^d)$ solution to
\begin{equation}
\label{e.alt.tdvk}
(2^{-k} - \nabla \cdot \a \nabla) \td v_k = 2^{-k} \, \td v_{k-1} \qquad \text{in } (-L(k,n),L(k,n))^d,
\end{equation}
with null Dirichlet boundary condition on $\dr \Ll( (-L(k,n),L(k,n))^d \Rr)$. The estimator we wish to calculate, slightly modified from \eqref{e.def.hatsigma}, is then defined by
\begin{equation}  
\label{e.alt.hatsigma}
\hat \sigma^2_n := \fint_{\Ll(-2^{n}, 2^n\Rr)^d} \Ll( -\a\xi \cdot \nabla \td v_0 + \td v_0^2 \Rr)  + \sum_{k = 1}^n 2^k \fint_{\Ll(-2^{n-\frac k 2}, 2^{n-\frac k 2}\Rr)^d}  \Ll( \td v_{k-1} \td v_k + \td v_k^2 \Rr).
\end{equation}
In order to obtain numerical approximations of the functions $\td v_k$, we use the finite-element method with hierarchical hybrid grids presented in Section~\ref{s.hhg}. In all the examples we consider, the coefficient field is piecewise constant on $z + [0,1)^d$, for every $z \in \Zd$. We thus start from a coarse partition of the domain which consists, in dimension $d = 2$, in splitting each  unit square into two triangles, or in dimension $d = 3$, in splitting each unit cube into six tetrahedra. This provides us with a coarse partition of the domain, which was denoted by $\mcl T_H$ in Section~\ref{s.hhg}. We then proceed to refine this partition iteratively by decomposing, in dimension $d = 2$, each triangle into four smaller triangles, or in dimension $d = 3$, each tetrahedron into eight smaller tetrahedra (and we do so in practice by constructing a refined partition $\mcl T_h$ of the standard simplex~$\hat K$ iteratively, which provides us with an implicit fine partition of the whole domain using the notion of locally uniform partition, see Subsection~\ref{ss.hhg}). We denote the number of iterative levels of refinement performed in this way by $N_{\mathrm{ref}}$. This defines an approximation of the quantity $\hat \sigma^2_n$ defined in \eqref{e.alt.hatsigma}, which we denote by $\hat \sigma^2(n,N_{\mathrm{ref}})$. Strictly speaking, this quantity also depends on the choice of the boundary layer size $C_{\mathrm{bl}}$, but we keep this implicit in the notation.

\smallskip

Theorem~\ref{t.main} bundles together an estimate for the mean error and an estimate for the standard deviation or our approximation $\hat \sigma^2_n$. The approximation has been set up so that both quantities are of the same order, that is, $2^{-\frac{nd}{2}}$. Additionally to this error comes the error due to the finite-element discretization: for each fixed $N_{\mathrm{ref}}$, the quantity $\hat \sigma^2(n,N_{\mathrm{ref}})$ computes an approximation of the homogenized matrix of the \emph{discretized system} with $N_{\mathrm{ref}}$ levels of refinement. While we did not prove this, it is clear that all the arguments use to prove Theorem~\ref{t.main} would remain valid for the discretized system, and thus $\sigma^2(n,N_{\mathrm{ref}})$ allows to approximate the homogenized matrix $\ahom(N_{\mathrm{ref}})$ of the discretized system with a mean error and a standard deviation that both scale like $2^{-\frac{nd}{2}}$ as $n$ tends to infinity. 
However, there is also a discrepancy between $\ahom(N_{\mathrm{ref}})$ and the homogenized matrix $\ahom$ of the continuous equation, which is manifested in our algorithm in the fact that we do not have perfect access to the solutions $\td v_{k}$ of \eqref{e.alt.tdvk}. Moreover, as explained in Subsection~\ref{ss.rough}, the rate of convergence of approximate solutions in terms of $N_{\mathrm{ref}}$ can become arbitrarily slow as the ellipticity contrast gets large.

\smallskip

As said above, we consider coefficient fields that are piecewise constant on unit cubes; more precisely, we assume that for every $z \in \Zd$, we have
\begin{equation*}  
\forall x \in z + [0,1)^d, \quad \a(x) = \b_z,
\end{equation*}
for some family $(\b_z)_{z \in \Zd}$. This family is random and constructed in the following way, given two parameters $\alpha \le \beta \in (0,\infty)$: the random variables $(\b_z)_{z \in \Zd}$ are independent; the matrix $\b_z$ is diagonal; the diagonal entries of $\b_z$, which we denote by $(\b_{z,ii})_{1 \le i \le d}$, are independent; and finally, for every $i \in \{1,\ldots, d\}$,
\begin{equation*}  
\P \Ll[ \b_{z,ii} = \alpha \Rr] = \P \Ll[ \b_{z,ii} = \beta \Rr] = \frac 1 2.
\end{equation*}
As discussed in Subsection~\ref{ss.rough}, this example is particularly interesting since it is in some sense the coefficient field which allows for the most pathological singularities in the solutions for a given ellipticity ratio $\Lambda = \beta/\alpha$. Notice that, in order to demonstrate that our numerical code is not restricted to the case when $\a(x)$ is a multiple of the identity, we have dropped this restriction here (and it would not be difficult to accomodate for matrices that are not diagonal). An additional very interesting feature of this class of examples is that it is one of the very rare cases where the homogenized matrix is known exactly: in dimension $d = 2$, it is given by $\ahom = \sqrt{\alpha \beta} \, \mathrm{Id}$ \cite[Exercise 2.10]{AKMbook}. (No such simple formula is expected to exist in dimension $d = 3$, and in fact, we are not aware of any genuinely three-dimensional coefficient field where the homogenized matrix is known exactly.)

\subsection{Two-dimensional case, moderate contrast}

\begin{figure}
\begin{center}
\includegraphics[scale=0.75, trim = 0cm 1cm 0cm 0.5cm, clip = true]{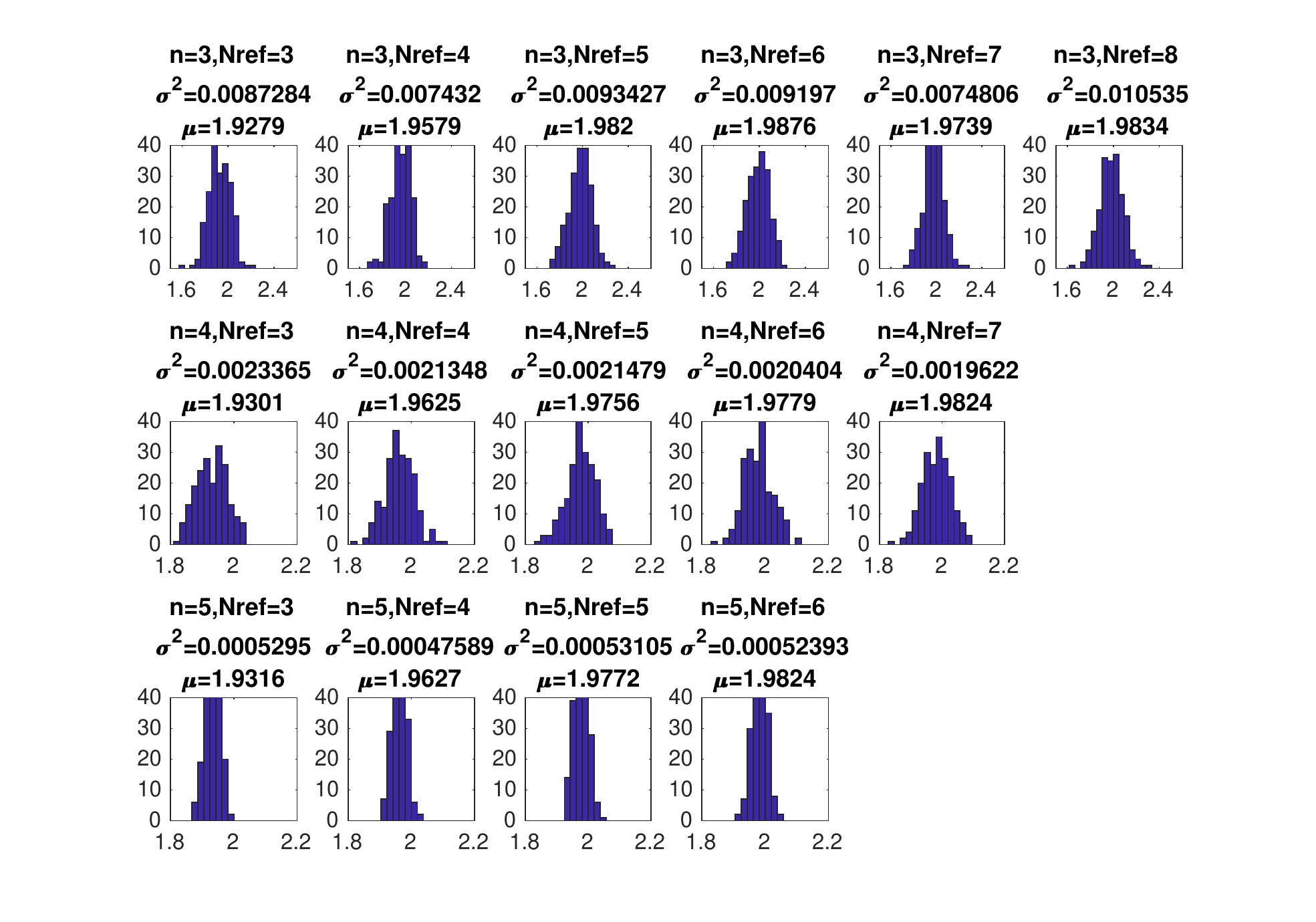}
\end{center}
\caption{Empirical distribution of $\hat \sigma^2(n,N_{\mathrm{ref}})$ when $d = 2$, $\alpha = 1$ and $\beta = 9$, for different values of $n$ and $N_{\mathrm{ref}}$. We recall that $N_{\mathrm{ref}}$ is the number of times the finite-element mesh has been refined.}
\label{fig.ex1_hist}
\end{figure}

We fix $d = 2$, $\alpha = 1$, and $\beta = 9$. We thus have in this case that $\ahom = \sqrt{\alpha \beta} \, \mathrm{Id} = 3 \, \mathrm{Id}$. Using the notation in \eqref{e.def.fintR}, we also observe that $\fint \a = 5 \, \mathrm{Id}$, and therefore we expect that $\hat \sigma^2(n,N_\mathrm{ref})$ converges to~$2$ as $n$ and $N_\mathrm{ref}$ tend to infinity. We fix the boundary layer constant $C_{\mathrm{bl}} := 4$, and plot a histogram of $\hat \sigma^2(n,N_\mathrm{ref})$ for different values of $n$ and $N_{\mathrm{ref}}$, see Figure~\ref{fig.ex1_hist}. Each histogram is obtained by sampling $200$ realizations of the estimator. For each value of $n$ and $N_{\mathrm{ref}}$, we also report the empirical mean and variance of $\hat \sigma^2(n,N_{\mathrm{ref}})$. Notice that the estimator has a bias to overestimate the value of $\ahom$, which is consistent with the fact that the remainder term~$D_n$ in the series expansion~\eqref{e.multiscale} is nonnegative, see Proposition~\ref{p.remainder} (the sign of the discretization error was not predicted theoretically).

\smallskip

From the results displayed on Figure~\ref{fig.ex1_hist}, one can check that the quantity $\hat{\sigma}^2(n = 4, N_{\mathrm{ref}} = 3)$ falls within the interval $[1.84,2.02]$ with $95\%$ probability. Taking for granted that we can estimate $\fint_\Rd \a= 5 \, \mathrm{Id}$ more easily, we obtain an estimation for $\xi\cdot \a\xi$ which falls within the interval $[2.98,3.16]$ with $95\%$ probability, the true value being $3$. This estimator thus produces a result with a relative error of $5\%$ from the true value with $95\%$ probability. It takes about 2\,s to compute this quantity on a laptop computer with 16\,Go of memory and using a single processor clocking at~2.40\,GHz. 

\smallskip

\begin{figure}
\begin{center}
\includegraphics[scale=0.5]{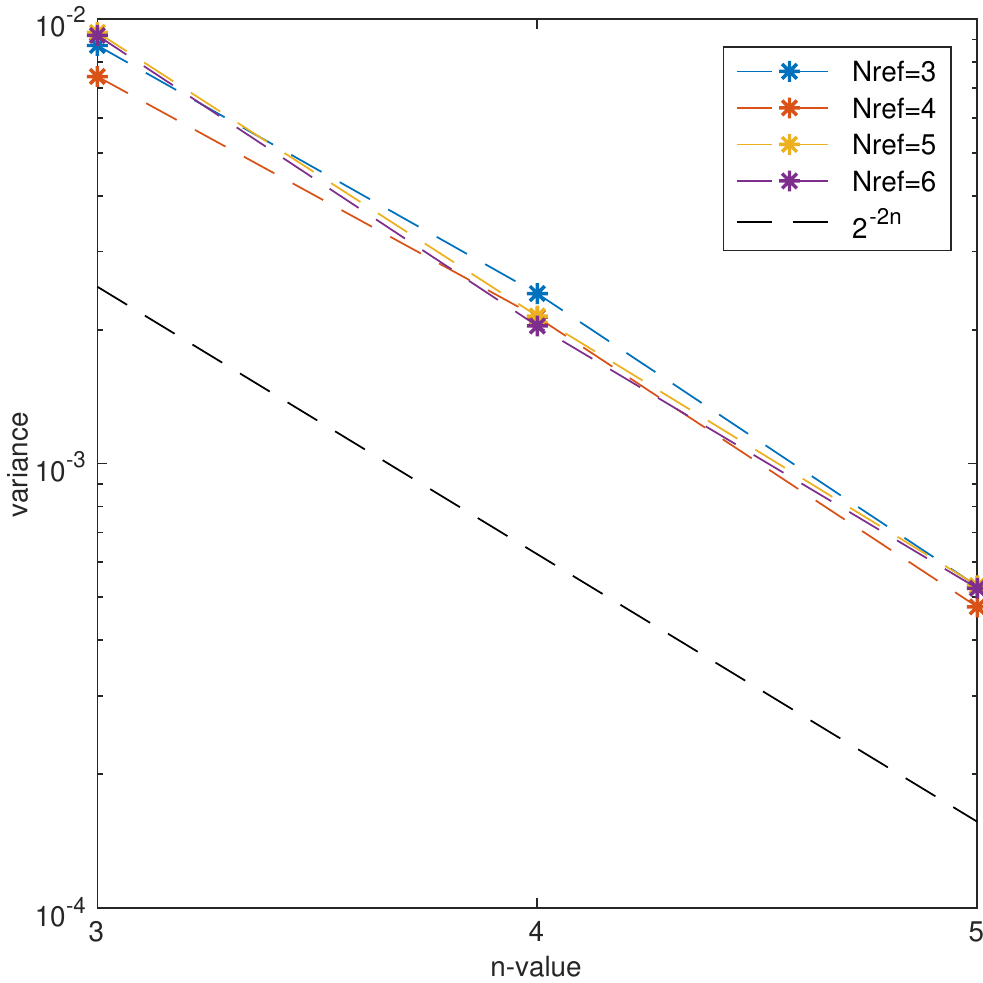} \hspace{1cm} \includegraphics[scale=0.5]{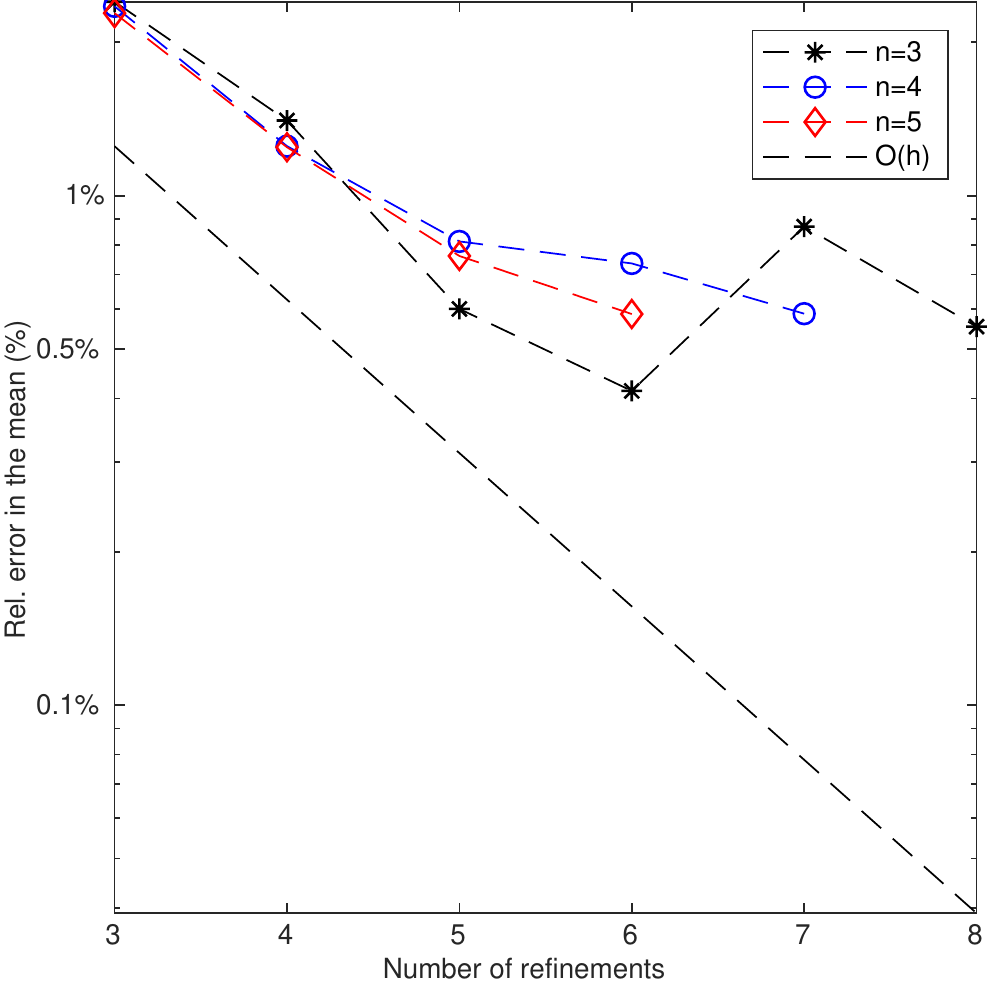}
\end{center}
\caption{Variance (left) and mean error (right) of $\hat \sigma^2(n,N_{\mathrm{ref}})$ for different values of $n$ and number of mesh refinements $N_{\mathrm{ref}}$. The variance decays approximately with the rate $2^{-dn}$ predicted by Theorem~\ref{t.main}.}
\label{fig:ex1_var_mean}
\end{figure}
\begin{figure}
\begin{center}
\includegraphics[scale=0.55, trim = 0cm 0.3cm 0cm 0cm, clip = true]{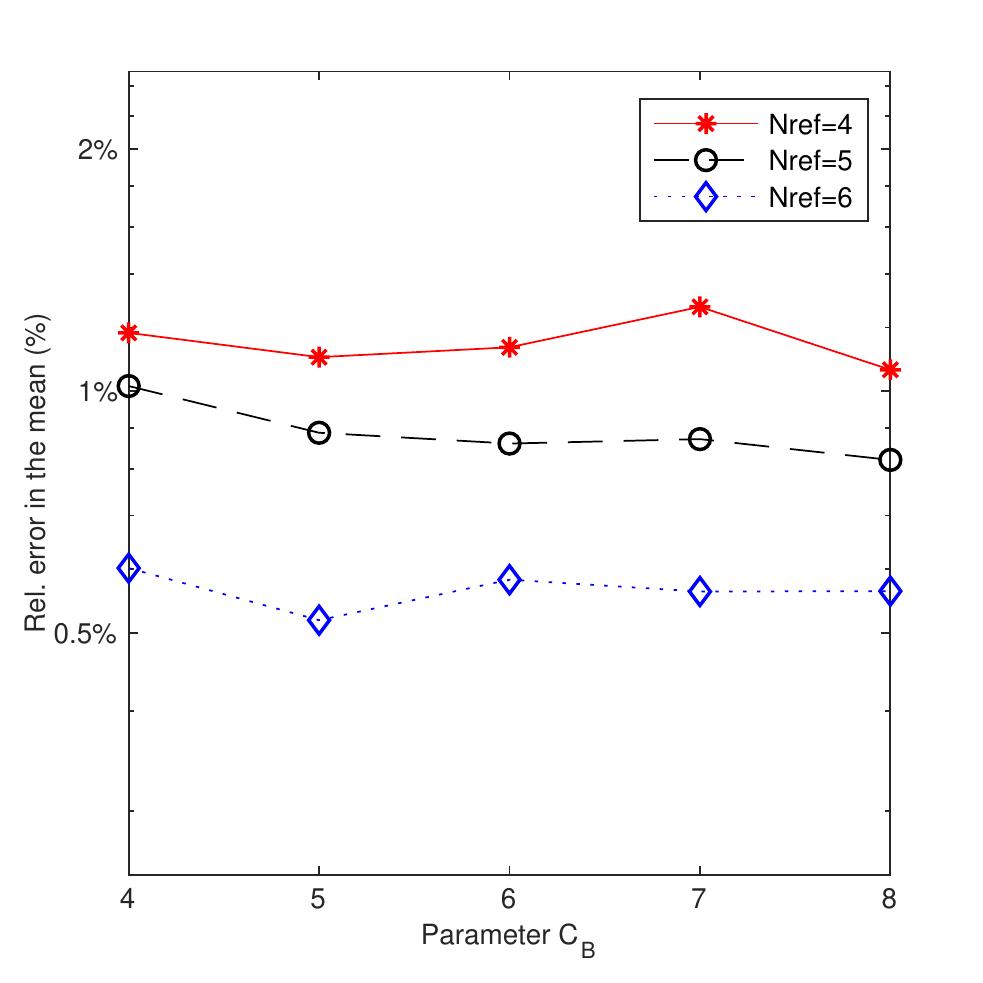} \hspace{1cm} \includegraphics[scale=0.5]{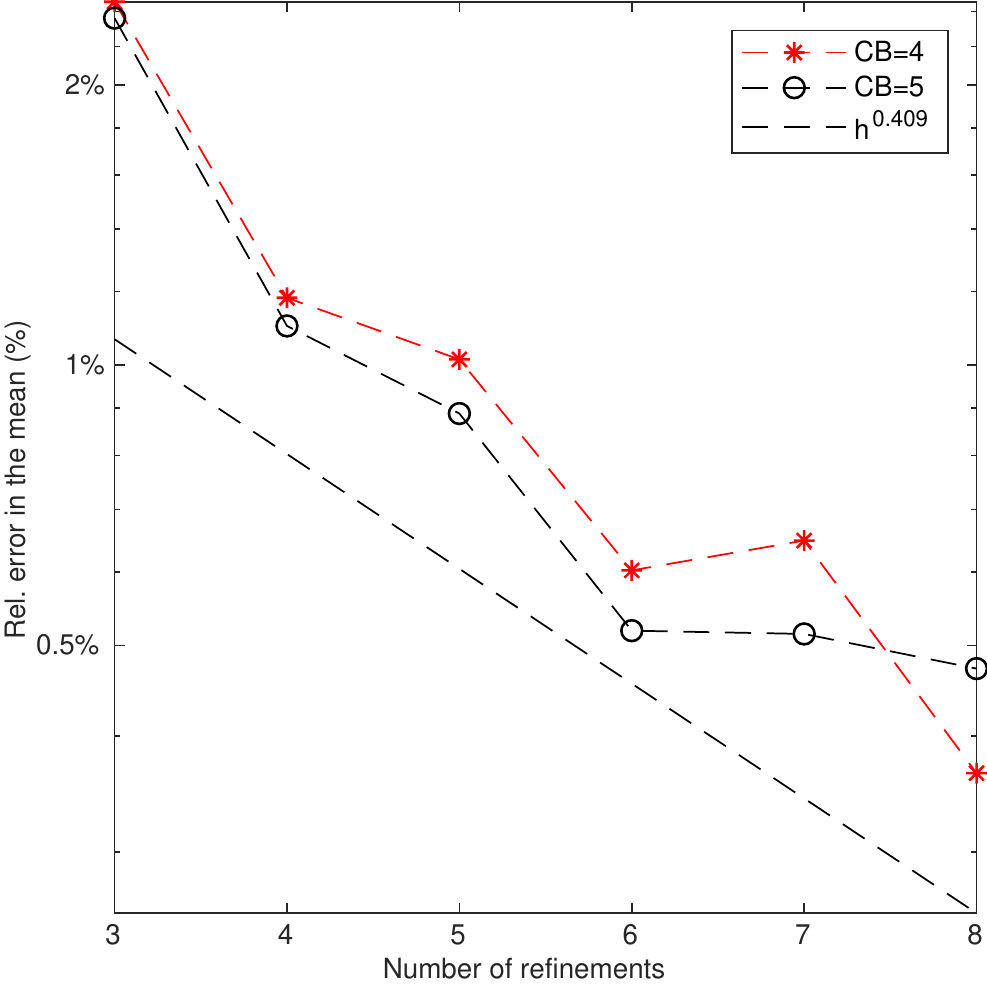}
\end{center}
\caption{Error in the mean, for $n=4$, as a function of the boundary layer constant $C_{\mathrm{bl}}$ (left), and as a function of the number of mesh refinements (right). The error is essentially independent of the value of $C_{\mathrm{bl}} \ge 4$. The dependency in $N_{\mathrm{ref}}$ is in good agreement with the predicted convergence rate in $h^\alpha$, for $\alpha \simeq 0.409$.}
\label{fig:ex1_mean2}
\end{figure}

We next investigate more precisely the scalings of the  standard deviation and mean error of $\sigma^2(n,N_{\mathrm{ref}})$. (By definition, the mean error is $|\E[\sigma^2(n,N_{\mathrm{ref}})] - 2|$ for this example). On the left frame of Figure~\ref{fig:ex1_var_mean}, we see that the variance decays like $2^{-dn} = 2^{-2n}$, as predicted by the theoretical results. On the left frame of Figure~\ref{fig:ex1_var_mean}, we display the mean error as a function of $n$ and of the number of refinements. Our theoretical arguments predict that the mean error is the sum of a term of the order of $2^{-\frac {nd}{2}} = 2^{-n}$, of the discretization error which depends on $N_{\mathrm{ref}}$, and of the boundary layer error related to the choice of $C_{\mathrm{bl}}$. We display the dependency of the mean error in these parameters more precisely on Figure~\ref{fig:ex1_mean2}, for the value $n = 4$. We see on the left frame of Figure~\ref{fig:ex1_mean2} that the choice of $C_{\mathrm{bl}} = 4$ is already sufficient to ensure that the boundary layer error is negligible compared with the discretization error. On the right frame of Figure~\ref{fig:ex1_mean2}, we observe that the discretization error decays approximately like $h^{0.409}$, where $h$ is the element size, as predicted in the discussion around \eqref{e.exponent.1.9}.

\subsection{Two-dimensional case, high contrast}
We continue with the two-dimensional setting, we also keep $\beta = 9$, but we now progressively decrease $\alpha$ in the interval $[10^{-2},1]$. More precisely, we vary $\alpha$ in twenty logarithmically equally spaced steps between the values $1$ and $10^{-2}$. We keep the parameters $n = 4$ and $N_{\mathrm{ref}} = 5$ fixed, and use $200$ samples of the estimator to compute the empirical average and standard deviation. 

\smallskip

In the code provided provided in the GitHub repository \eqref{e.github}, the choices $\alpha = 1$ and $\beta = 9$ are hard-coded, but these values can be modified by changing line 488 (or, in the three-dimensional case, line 487) of the file {\small{\textsf{src/examples/homogenized\_coefficients.jl}}}.

\smallskip

\begin{figure}
\begin{center}
\includegraphics[scale=0.5]{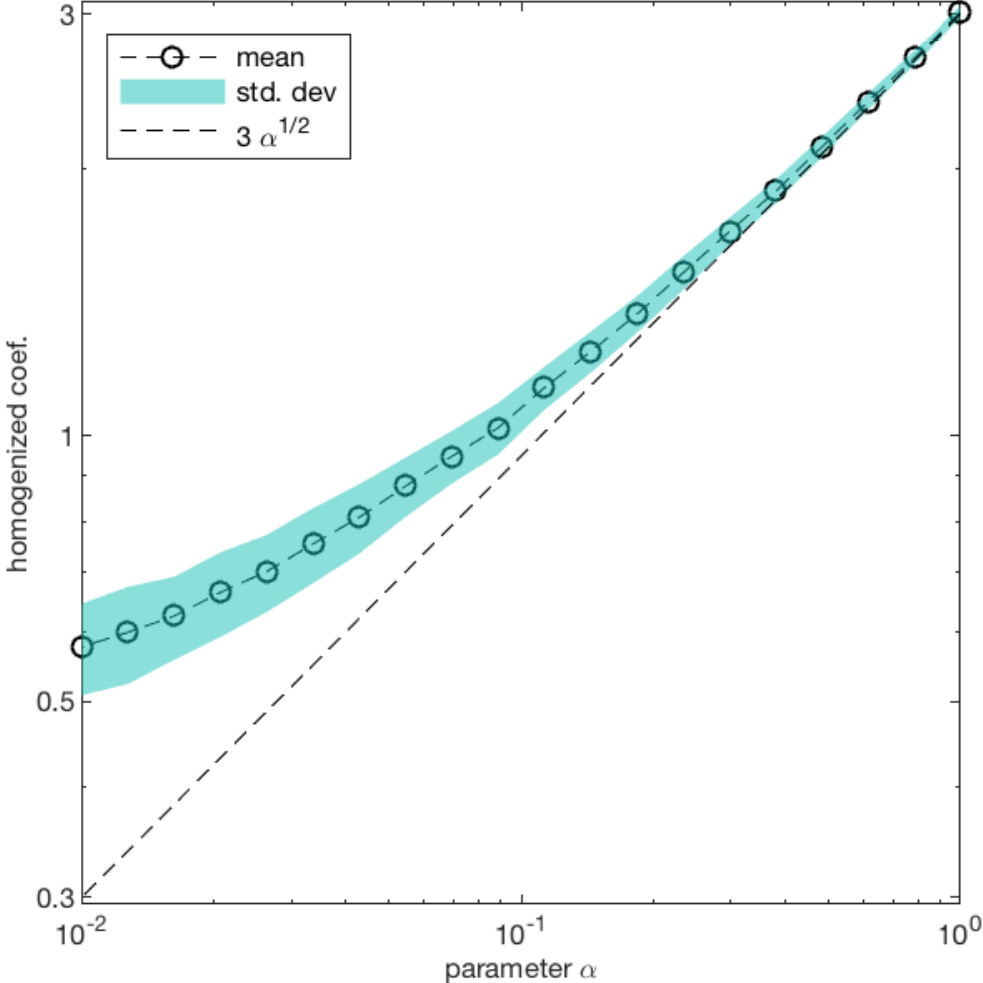} 
\hspace{1cm}
\includegraphics[scale=0.5]{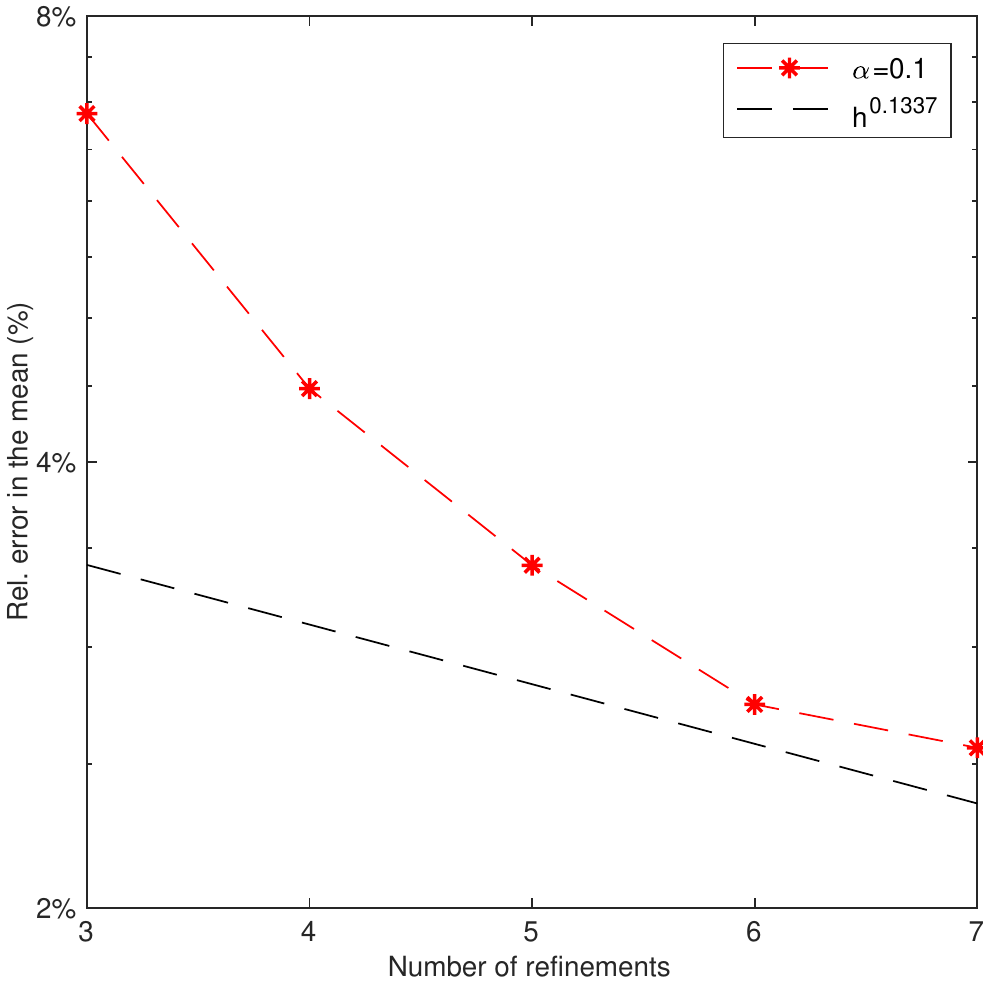}
\end{center}
\caption{Left: mean and standard deviation of $\hat \sigma^2(n = 4, N_{\mathrm{ref}} = 5)$, for different values of~$\alpha$. Notice that the logarithmic scale inflates the absolute value of the error on the left of the graph. Right: the mean error for $\alpha = 0.1$, as a function of~$N_{\mathrm{ref}}$.  For small values of $\alpha$, the finite element approximation converges very slowly, due to the singularities at the corners of the cherckerboard tiling. For $\alpha = 0.1$, we expect the asymptotic error rate to scale like $h^{0.1337}$.}
\label{fig:ex2_mean}
\end{figure}

For these twenty values of $\alpha \in [10^{-2},1]$, the left frame of Figure~\ref{fig:ex2_mean} displays the mean and the standard deviation of $\hat \sigma^2(n,N_{\mathrm{ref}})$, with the choices of $n = 4$ and $N_{\mathrm{ref}} = 5$. The estimator captures the true value of the homogenized matrix quite well, for a large span of values of $\alpha$, although relative errors become large when $\alpha$ approaches $10^{-2}$. This is in part due to the fact that the true homogenized matrix tends to zero as $\alpha$ is decrased to zero, and thus even a constant error in absolute value would translate into a relative error which blows up. A more fundamental reason for the increase of the error is that solutions become more and more singular, and thus accurate discretizations become more challenging. On the right frame of Figure~\ref{fig:ex2_mean}, we plot the relative error in the mean, for $\alpha = 0.1$ and different values of $N_{\mathrm{ref}}$. We expect the asymptotic convergence rate to scale approximately like $h^{0.1337\ldots}$, where $h$ is the size of a finite-element cell. Despite the slow asymptotic rate, a faster pre-asymptotic regime allows to bring the relative error within a few percentage points after five levels of refinement.

\subsection{Three-dimensional case, small contrast}
\begin{figure}
\begin{center}
\includegraphics[scale=0.55, trim = 0cm 1.5cm 0cm 1cm, clip = true]{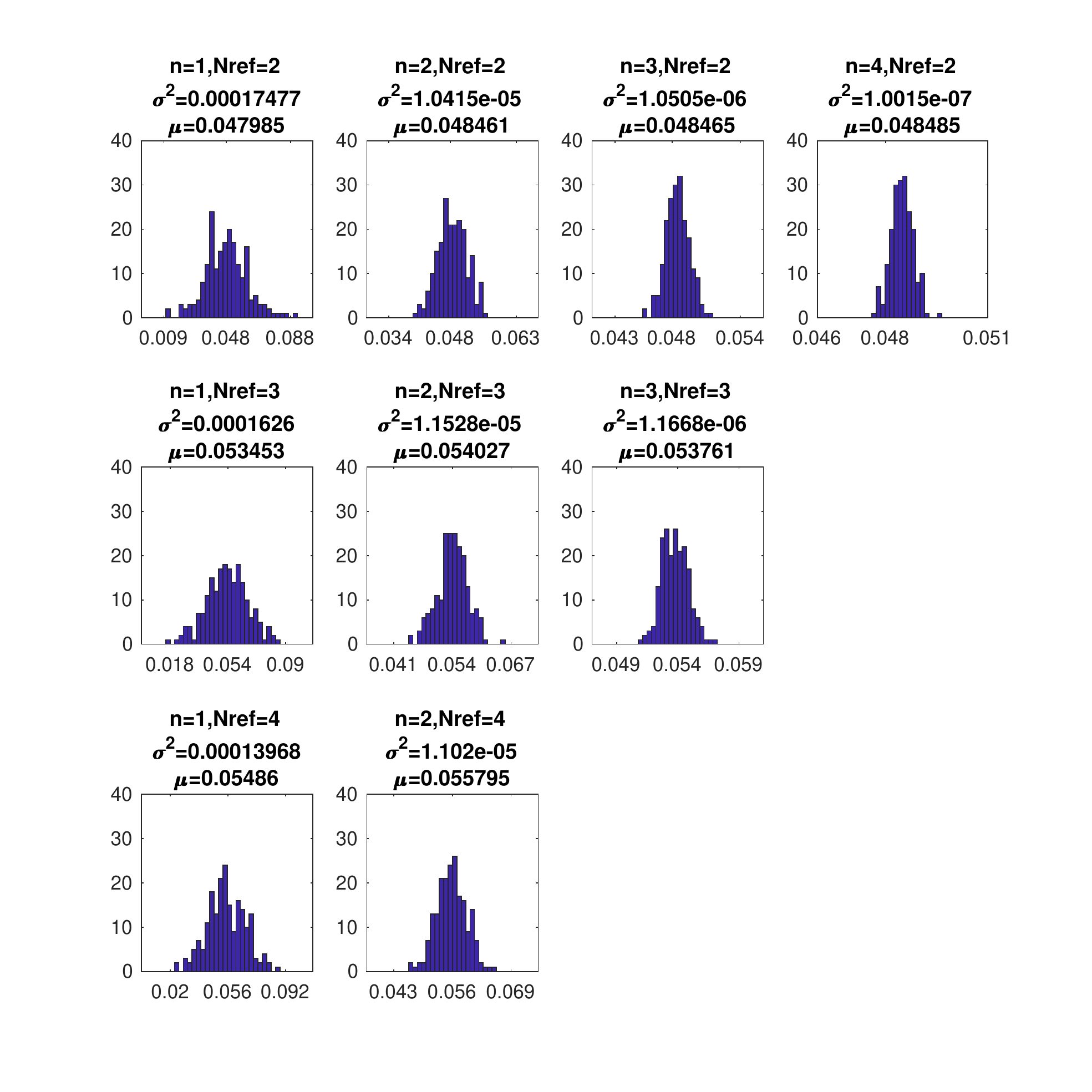}
\end{center}
\caption{Empirical distribution of $\hat \sigma^2(n,N_{\mathrm{ref}})$ when $d = 3$, $\alpha = 1$ and $\beta = 2$, for different values of $n$ and $N_{\mathrm{ref}}$.}
\label{fig.ex3_hist}
\end{figure}

We now turn to the investigation of three-dimensional problems. To further make the case that the scaling of the discretization error is strongly affected by the ellipticity contrast, we start by investigating a regime of relatively small contrast: we fix $\al = 1$ and $\beta = 2$. As in the two-dimensional case, we plot a histogram for $\hat \sigma^2(n,N_{\mathrm{ref}})$, for different values of $n$ and $N_{\mathrm{ref}}$, see Figure~\ref{fig.ex3_hist}. Each histogram is obtained by combining 200 samples of the estimator. 

\smallskip

As a rule of thumb, we expect that the approximation $\ahom \simeq \fint_{\Rd} \a$ improves as we increase the dimension and reduce the contrast. This is confirmed by the numerical results, which suggest that for the example considered, the difference $\fint_{\Rd} \a - \ahom$ is about $4\%$ of the magnitude of the homogenized matrix $\ahom$ itself. We also see that the convergence of $\hat \sigma^2(n,N_{\mathrm{ref}})$ is relatively rapid as $N_{\mathrm{ref}}$ increases. Finally, the variance decays roughly like $2^{-dn} = 2^{-3n}$, in agreement with the theoretical prediction.

\subsection{Three-dimensional case, moderate contrast}

\begin{figure}
\begin{center}
\includegraphics[scale=0.55, trim = 0cm 1.5cm 0cm 1cm, clip = true]{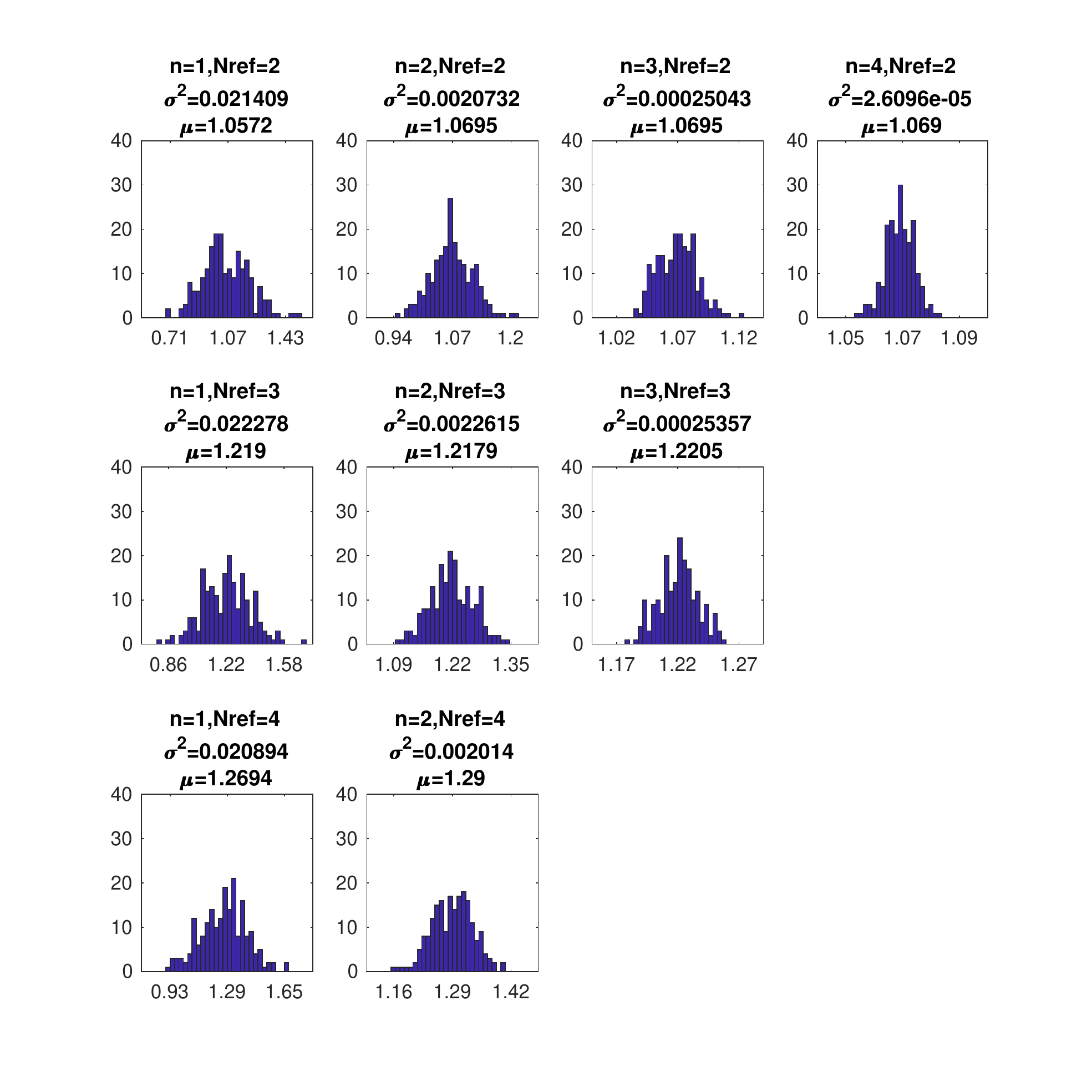}
\end{center}
\caption{Empirical distribution of $\hat \sigma^2(n,N_{\mathrm{ref}})$ when $d = 3$, $\alpha = 1$ and $\beta = 9$, for different values of $n$ and $N_{\mathrm{ref}}$.}
\label{fig.ex4_hist}
\end{figure}
We now turn to more sizable values of the ellipticity contrast, in three dimensions, fixing $\al = 1$ and $\beta = 9$. Figure~\ref{fig.ex4_hist} displays a histogram of $\hat \sigma^2(n,N_{\mathrm{ref}})$ for different values of $n$ and $N_{\mathrm{ref}}$, using 200 samples per histogram. 

\smallskip

Notice that the empirical variance of $\hat \sigma^2(n,N_{\mathrm{ref}})$ does not depend much on $N_{\mathrm{ref}}$. A linear regression based on the values for $N_{\mathrm{ref}} = 2$ suggests that this variance decays with $n$ like $C 3^{-\gamma n}$ for $\gamma \simeq 3.2$. This is in agreement with the theoretical prediction of $\gamma = d = 3$.

\smallskip

In the three-dimensional case, we are not aware of any analytic expression for the homogenized matrix. The numerical results we obtained and a naive extrapolation suggest that 
\begin{equation*}  
\fint_{\Rd}  \a - \ahom \simeq 1.35 \, \mathrm{Id}, \quad \text{and thus} \quad \ahom \simeq 3.65 \, \mathrm{Id}.
\end{equation*}
Assuming that this is correct, a $\pm 5\%$ error interval for $\ahom$ is $[3.47, 3.83]$. An average of four samples of the quantity $5-\hat{\sigma}^2(n = 2, N_{\mathrm{ref}} = 3)$ falls within this interval with probability above~$95\%$, and takes about 20\,min to compute on a laptop computer with 16\,Go of memory using a single 2.40\,GHz processor. A single sample of the quantity $5 - \hat \sigma^2(n = 2, N_{\mathrm{ref}} = 4)$ falls within the smaller interval $[3.62,3.80]$ with $95\%$ probability, and takes about 38\,min to compute with the same piece of hardware. Moreover, the computational time can be significantly reduced by optimizing on the boundary layer size.


\subsection*{Acknowledgments} AH~was partially supported by the Stenb\"{a}ck foundation and Academy of Finland project~312340.  JCM was partially supported by the ANR grants LSD (ANR-15-CE40-0020-03) and Malin (ANR-16-CE93-0003) and by a grant from the NYU--PSL Global Alliance. HS~was partially supported by Academy of Finland project~305759.

\small
\bibliographystyle{abbrv}
\bibliography{ahom}

\end{document}